\newtheorem{Thm}{Theorem}[section]
\newtheorem{Lem}[Thm]{Lemma}
\newtheorem{Def}[Thm]{Definition}
\newtheorem{Cor}[Thm]{Corollary}
\newtheorem{Prop}[Thm]{Proposition}
\newtheorem{Ex}[Thm]{Example}
\newtheorem{Rem}[Thm]{Remark}
\newtheorem{Fact}[Thm]{Fact}
\newtheorem{Conj}[Thm]{Conjecture}
\title{On representation rings in the context of\\ monoidal categories  }
\author{Min Huang $\;\;\;\;\;\;$ Fang Li $\;\;\;\;\;\;$ Yichao Yang}
\address{Min Huang
\newline Department
of Mathematics, Zhejiang University (Yuquan Campus), Hangzhou, Zhejiang
310027,  P.R.China}
\email{minhuang2007@gmail.com}
\address{Fang Li
\newline Department
of Mathematics, Zhejiang University (Yuquan Campus), Hangzhou, Zhejiang
310027, P.R.China}
\email{fangli@zju.edu.cn}
\address{Yichao Yang
\newline Department
of Mathematics, Zhejiang University (Yuquan Campus), Hangzhou, Zhejiang
310027,  P.R.China}
\email{yyc880113@163.com}
\date{version of \today}
\newcommand{\lra}{\longrightarrow}
\newcommand{\ra}{\rightarrow}
\newcommand{\sdp}{\times\kern-.2em\vrule height1.1ex depth-.05ex}
\newcommand{\epi}{\lra \kern-.8em\ra}
\begin{document}

\renewcommand{\thefootnote}{\alph{footnote}}
\setcounter{footnote}{-1} \footnote{\emph{Mathematics Subject
Classification(2010)}:~ 19A22, ~ 16T05, ~18D10  }
\renewcommand{\thefootnote}{\alph{footnote}}
\setcounter{footnote}{-1} \footnote{ \emph{Keywords}: representation
ring, derived ring, shift ring, Nakayama truncated algebra, Pascal triangle, monoidal category}

\begin{abstract}
In general, representation rings are well-known as Green rings from module categories of Hopf algebras.

In this paper, we study Green rings in the context of
monoidal categories such that representations of Hopf algebras can be
investigated through Green rings of various levels from module
categories to derived categories in the unified view-point. Firstly,
as analogue of representation rings of Hopf algebras, we set up the
so-called Green rings of monoidal categories, and then list some such
categories including module categories, complex categories, homotopy
categories, derived categories and (derived) shift categories, etc.
and the relationship among their corresponding Green rings.

The main part of this paper is to characterize
representation rings and derived rings of a class of inite dimensional Hopf
algebras constructed from the Nakayama
truncated algebras $KZ_{n}/J^{d}$ with certain constraints. For the
representation ring $r(KZ_{n}/J^{d})$, we completely determine
its generators and the relations of generators via the method of Pascal triangle. For the
derived ring $dr(KZ_{n}/J^{2})$(i.e., $d=2$), we determine its generators and give the relations of generators. In these two aspects, the polynomial characterizations of the representation ring and the derived ring are both given.
\end{abstract}

\maketitle
\bigskip
\tableofcontents

\section{Introduction}

Throughout this paper let $K$ be an algebraically closed field, all
modules are left modules. As we all know, the Grothendieck group
$K_{0}(A)$ plays an important role for a $K-$algebra $A$. If $A$ is
not only an algebra, but also a Hopf algebra, we can define a ring
structure on $K_{0}(A)$ by using the comultiplication $\Delta$, that
is, the Grothendieck ring. However, Grothendieck ring
only reflects the relations among irreducible representations, but not
all indecomposable representations. Therefore, in some
references (e.g. \cite{[6]},\cite{[13]}), representation rings of finite groups were introduced to study modular representation theory through the
structure of representation rings such as semi-simplicity, etc..

In essence, representation rings are constructed by (indecomposable) objects from the module categories of Hopf algebras and then are used as a tool to reflect the property of Hopf algebras. Hence, we can apply the same idea to more categories enjoying similarity, in which monoidal
categories seem be the most natural one. So, in this paper, we firstly introduce the concept of Green rings over
monoidal categories which we point out to be
regarded as the decategorification of monoidal categories, or conversely
monoidal categories can be thought as the categorification of Green rings.

The view-point of Green rings of monoidal categories is important
for us, since under which we can unify a series of special monoidal categories via Green rings. Representation rings over module categories are of the most importance. The others include that for a Hopf algebra $H$, the complex category $Ch(H)$, the homotopy category $K(H)$, the derived category $D^b(H)$ and their subcategories $Ch^{sh}(H)$ and $D^{sh}(H)$ with Green rings respectively called as complex rings, derived rings and (derived) shift rings, etc. In this part, we also give the relations among the Green rings of these special monoidal categories.

Green rings mentioned above are non-commutative unless
the Hopf algebras are cocommutative. Therefore in general, we can characterize these rings via free algebras; specially, when the
Hopf algebras are cocommutative, we can use polynomial algebras to characterize the Green rings. At the end of this part, we give the
characterizations of shift rings and derived rings by using of free algebras and polynomial algebras.

 Most of the content of this paper is to characterize representation rings and derived rings of a special class of finite dimensional Hopf
algebras, i.e. the Nakayama truncated algebras $KZ_{n}/J^{d}$ with $d=p^{m}\leq n$ over an algebraically closed field $K$ of characteristic $p$.
Nakayama algebras, which are also called generalized uniserial algebras, were introduced by Nakayama who characterized this kind of algebras as an artinian ring $R$ whose each $R$-module is a direct sum of quasi-primitive modules (i.e. those factor modules of primitive one-sided ideals), see \cite{[1]},\cite{[10]},\cite{[20]}. The Nakayama truncated algebras, which were also studied in \cite{[2]},\cite{[3]},\cite{[pre3]}, are always basic self-injective Nakayama
algebras. Some well-known algebras can be realized as Nakayama truncated algebras, such as the Taft algebras and the generalized Taft algebras studied in \cite{[21]},\cite{[24]}. Another reason why we consider this kind of Nakayama truncated algebras is that these algebras possess Hopf algebra structure, see Proposition \ref{prop3.2}.

The representation rings of certain Hopf algebras are computed, such as
finite dimensional semi-simple Hopf algebra, the enveloping algebra
of a complex semi-simple Lie algebra, the polynomial Hopf algebra
$K[x]$ and the Sweedler 4-dimensional Hopf algebra (see,
e.g. \cite{[8]},\cite{[9]},\cite{[11]},\cite{[18]},\cite{[26]}).
Here we would like to mention the recent work by Chen, Oystaeyen and
Zhang for Taft algebras and by Li and Zhang for the generalized Taft
algebra (see \cite{[7]},\cite{[17]}).

Although all the algebraic structure of Taft algebras, generalized
Taft algebras and Nakayama truncated algebras can be written as the
form $KZ_{n}/J^{d}$ for an oriented cycles $Z_{n}$ in the cases of
$d=n,d|n$ and $d=p^m\leq n$ respectively, however, they still have a
lot of differences. Actually, the differences between our paper and \cite{[7]},\cite{[17]}
 (that is, the representation rings of Taft algebras and generalized
Taft algebras) can be visualized in the following table.

\setlength{\tabcolsep}{1.5pt}
\renewcommand{\arraystretch}{1.3}
\begin{center}
\begin{tabular}{|c|c|c|c|}
\hline
\multicolumn{1}{|c|}{ } & \multicolumn{1}{|c|}{Field ~ K} & \multicolumn{1}{|c|}{Relation~
between
~$n$ and ~$d$} & \multicolumn{1}{|c|}{Number ~of ~Generators} \\
\hline
$[7]$ & Arbitrary & $n=d$ & $2$ \\
\hline
$[17]$ & $\{\overline{K}=K\}$ $+$ $\{${\rm char}$K=0$$\}$ & $d|n$ & $2$  \\
\hline
${\rm Here}$ & $\{\overline{K}=K\}$ $+$ $\{${\rm char}$K=p$$\}$ & $\{n\geq d\}$ $+$
$\{d=p^{m}$
for integer $m>0$\} & $m+1$  \\
\hline
\end{tabular}
\end{center}
\bigskip

If we fix a natural integer $n$, by the Prime number theorem and the
Fundamental
theorem of arithmetic, we have \[\begin{array}{ccl}
\sharp \{d~|~d\leq n,d=p^m ~ \text{for}~ m \geq 0~ \text{and a prime}~ p \} & \geq &
 {\rm Pi}(n)\approx \frac{n}{\log{n}} \\
  & \geq & (\alpha_{1}+1)\cdots(\alpha_{s}+1) \\
  & = & \sharp
\{d~ |~ d \text{ is a divisor of}~ n\} \\
\end{array}\]
 for any natural integer $n=p_{1}^{\alpha_{1}}\cdots
p_{s}^{\alpha_{s}}$, where $\sharp $ denotes the cardinally of the
set  and ${\rm Pi}(n)$ is the number of primes less than or equal to
$n$. It means that the number of the cases covered in this paper is larger than
that of the cases considered in \cite{[7]},\cite{[17]}.

The Nakayama truncated algebra $KZ_{n}/J^{d}$ is of finite representation type and becomes a cocommutative Hopf algebra in this case we discuss. We give the generators of its representation ring with cardinality less than the number of iso-classes of indecomposable modules and determine the relations of these generators. In order to express more clearly, we introduce the method of Pascal triangle. At last, we use linear recurrent sequence to obtain the polynomial characterization of the representation ring $r(KZ_{n}/J^{d})$.

Our other attempt is to calculate the derived ring of the algebra $KZ_{n}/J^{d}$. In fact,
the representation ring $r(KZ_{n}/J^{d})$ which have been characterized completely at Section 7 is a subring of the derived
ring $dr(KZ_{n}/J^{d})$. Another subring of $dr(KZ_{n}/J^{d})$ is the shift ring $sh(KZ_{n}/J^{d})$ possessing the graded structure which is easy to be described, see Proposition \ref{prop10.3}. However, the calculation of the structure coefficients of the derived ring $dr(KZ_{n}/J^{d})$ is more difficult than that of $sh(KZ_{n}/J^{d})$ and $r(KZ_{n}/J^{d})$. Indeed, so far we are only able to investigate the derived ring for the case $d=2$, i.e. $H=KZ_{n}/J^{2}$, see Theorem \ref{thm10.6} where the generators of $dr(KZ_{n}/J^{2})$, the relations of generators and the
polynomial characterization are given respectively in (i), (ii) and (iii). The remainder problem is that
 in Theorem \ref{thm10.6}, for the case $s>s'>1$, the decomposition of the double complex
$P^{\bullet}(j'+s'-1,j')\otimes P^{\bullet}(j+s-1,j)$ has three
choices for them and we still cannot determinate actually which one is. For this problem, we conjecture that there can be only one choice of decomposition of the double complex $P^{\bullet}(j'+s'-1,j')\otimes P^{\bullet}(j+s-1,j)$, see Conjecture \ref{conj10.4}.

As an application of the polynomial characterization of representation rings, we give
a preliminary discussion of the isomorphism problem on representation
rings. For this, the complexified representation ring $R(KZ_{n}/J^{d})$ is defined from $r(KZ_{n}/J^{d})$ and then using it, a sufficient condition for such two representation rings $R(KZ_{n_{1}}/J^{d_{1}})$ and $R(KZ_{n_{2}}/J^{d_{2}})$ with $n_{1}d_{1}=n_{2}d_{2}$ to be isomorphic is given that in their polynomial characterizations, the ideals generated by the relations in the polynomial rings are both radical ideals.

This paper is organized as follows. In Section $2$, we
first define Green rings for monoidal categories and
then introduce representation rings, (derived) shift rings and derived ring as
some special examples of Green rings; moreover, their relations are discussed.
 In Section $3$, we recall some basic definitions of Nakayama truncated
algebras $KZ_{n}/J^{d}$ and list all the indecomposables
modules and then show the Hopf algebra structure of $KZ_{n}/J^{d}$ due to some results from $[1,12]$. In Section $4$, we give an
elementary lemma to simplify our proof in the process. In Section $5$, for
integers $0\leq i,i',l\leq d-1$ and a sequence of integers
$U=\{u_{j}\}_{0\leq j\leq l}$, we give a combinatorial way to
construct the indecomposable submodule
$M(i,i',l,\{u_{0},u_{1},\cdots,u_{l}\})$ of the tensor product of
indecomposable modules $M(i,\overline{0})\otimes M(i',\overline{0})$ via Pascal triangle.
In Section $6$, we describe the generators  and relations of the representation ring $r(KZ_{n}/J^{d})$ of
Nakayama truncated algebras $KZ_{n}/J^{d}$ precisely. In Section $7$,  the
representation ring $r(KZ_{n}/J^{d})$ is characterized as the certain quotient ring of the corresponding polynomial ring. In Section $8$, we
first give the structure of the shift ring $sh(KZ_{n}/J^{d})$ and then calculate the derived ring $dr(KZ_{n}/J^{2})$ through the generators and their relations, although the same discussion for general $dr(KZ_{n}/J^{2})$ still keeps deeply difficult so far.
\bigskip

\section{Analogue of Green rings from monoidal categories and some special cases }

In this section, we put the theory of representation rings (i.e. Green rings) of
module categories in the context of monoidal categories.

Monoidal categories (also called tensor categories) were introduced
in 1963 by B$\grave{e}$nabou \cite{[5]}, which are used to define the
concept of a monoid object and an associated action on the objects
of the category. They arealso used in enriched categories. The
theory of monoidal categories has numerous applications, e.g. to
define models for the multiplicative fragment of intuitionistic
linear logic, and to form the mathematical foundation for the
topological order in condensed matter. Braided monoidal categories
have applications in quantum field theory and string theory.

We will view the method of Green rings in the meaning of monoidal
categories and then in particular, introduce the analogue methods
via module categories, complex categories, homotopy categories, derived categories and the (derived) shift categories, where the last one can be viewed as the full subcategories of complex categories (respectively, bounded derived categories).

A {\em monoidal category} $(\mathcal C, \otimes, I, a, l, r)$ is a
category $\mathcal C$ which is equipped with a {\em tensor product}
$\otimes: \mathcal C\times \mathcal C\rightarrow \mathcal C$, with
an object $I$, called the {\em unit} of $\mathcal C$, with an
associativity constraint $a$, a {\em left unit constraint} $l$ and a
{\em right unit constraint} $r$ with respect to $I$ such that the
Pentagon Axiom and the Triangle Axiom are satisfied, see \cite{[16]}.

Throughout this paper, we always assume the monoidal category $\mathcal C$ to be an additive
Krull-Schmidt category and its tensor product $\otimes$ to be an additive bifunctor.

\begin{Def}\label{generalgreen}
{\rm Assume a monoidal category $(\mathcal C, \otimes, I, a, l, r)$ is an additive
Krull-Schmidt category with $\otimes$ as an additive bifunctor. Denote by $[C]$ the isomorphism class of an
object $C$ in $\mathcal C$. Then the {\em Green ring} $gr(\mathcal C)$ of
$\mathcal C$ is defined as the ring with identity $[I]$ which is an
additive abelian group generated by all $[C]$ modulo the relations
$[C]+[D]=[C\oplus D]$ with multiplication given by $[C][D]=[C\otimes
D]$ for any $C,D\in Ob(\mathcal C)$.}
\end{Def}

Indeed, the Green ring $gr(\mathcal C)$ can be viewed as the decategorification of the monoidal category $\mathcal C$ in the categorification theory as we will show below.

Recall that an {\em additive 2-category} is a category enriched
over the category of additive categories, i.e. for any two objects
$i,j$, the morphism set between $i$ and $j$ endows with an additive
category structure which satisfies some axioms. For details, refer
to \cite{[19]}.

The {\em split Grothendieck category} $[\mathscr{C}]_{\oplus}$ of a 2-category $\mathscr{C}$ is the
category which has the same objects with that of $\mathscr{C}$ and for any $i,
j \in [\mathscr{C}]$, we have ${\rm Hom}_{[\mathscr{C}]_{\oplus}}(i,
j)=[\mathscr{C}(i,j)]_{\oplus}$, where $[\mathscr{C}(i,j)]_{\oplus}$
denotes the split Grothendieck group of $\mathscr{C}(i,j)$, that is,
the quotient of the free abelian group generated by the isomorphic
classes of $\mathscr{C}(i,j)$ modulo the relations $[Y]-[X]-[Z]$
when $Y\cong X\oplus Z$, with the multiplication of morphisms is
given by $[M]\circ[N]=[M\circ N]$.

An additive 2-category
$\mathscr{C}$ is called a {\em categorification} of a $K$-linear
category $\mathscr{D}$ if $K\otimes_{\mathbb{Z}}[\mathscr{C}]$ is
isomorphic to $\mathscr{D}$. In this case, the category
$\mathscr{D}$ is called a  {\em $K$-decategorification} of
$\mathscr{C}$.

Clearly, an additive Krull-Schmidt monoidal category
$\mathcal {C}$ with  an additive bifunctor $\otimes$ can be
regarded as an additive 2-category $\mathscr{C}$ which has only one object $i$ and the morphism class $\mathscr{C}(i,
i)=\mathcal {C}$ with the multiplication of morphisms as $M\circ
N=M\otimes N$ for any $M, N \in \mathcal {C}$. We denote this 2-category as $\mathscr{C}_{\mathcal {C}}$. Moreover, a $K$-algebra $A$ can be regarded as a linear $K$-category $\mathcal {A}$ which has only one object $i$ and the morphism class $\mathcal {A}(i,i)=A$ with the multiplication of morphisms as
$a\circ b=ab$ for any $a, b \in A$.

From those viewpoints above, for a monoidal category $\mathcal C$ in
Definition \ref{generalgreen} and its corresponding 2-category
$\mathscr{C}_{\mathcal C}$, we have the ring isomorphism
$gr(\mathcal C)\cong [\mathscr{C}_{\mathcal C}]_\oplus$. After
expanding $gr(\mathcal C)$ to be over $K$, denoted by $gr_K(\mathcal
C)=K\otimes_{\mathbb Z} gr(\mathcal C)$ (called the {\em $K$-Green
ring}) as a $K$-linear category, we obtain that $gr_K(\mathcal
C)\cong K\otimes_{\mathbb Z}[\mathscr{C}_{\mathcal C}]_\oplus$. In
summary, we have:

\begin{Thm}
{\rm For a monoidal category $\mathcal C$,  its $K$-green ring $gr_K(\mathcal C)$ is a $K$-decategorification of $\mathcal C$; equivalently saying, $\mathcal C$ is a categorification of the $K$-Green ring $gr_K(\mathcal C)$.}
\end{Thm}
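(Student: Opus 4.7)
The plan is to unwind the definitions and observe that the theorem is essentially a tautological consequence of the identifications already made just before its statement; all the real content sits in checking that the ring structure on $gr(\mathcal{C})$ matches the composition in $[\mathscr{C}_{\mathcal{C}}]_{\oplus}$ under the one-object correspondence.

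First I would make explicit the two translations introduced in the preceding paragraphs. On one side, the monoidal category $\mathcal{C}$ is encoded as the 2-category $\mathscr{C}_{\mathcal{C}}$ with a single object $i$, $\mathscr{C}_{\mathcal{C}}(i,i) = \mathcal{C}$, and composition of 1-morphisms given by $M\circ N = M\otimes N$. On the other side, the $K$-algebra structure on $gr_{K}(\mathcal{C})$ is encoded as a $K$-linear category with a single object whose endomorphism ring is $gr_{K}(\mathcal{C})$ itself. Under these translations, the claim of the theorem reduces to producing an isomorphism of one-object $K$-linear categories
\[
gr_{K}(\mathcal{C}) \;\cong\; K\otimes_{\mathbb{Z}}[\mathscr{C}_{\mathcal{C}}]_{\oplus},
\]
which is precisely the definition of a $K$-decategorification.

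Next I would verify the analogous statement at the level of $\mathbb{Z}$, namely that $gr(\mathcal{C})\cong [\mathscr{C}_{\mathcal{C}}]_{\oplus}$ as rings (equivalently, as one-object pre-additive categories). As an additive group, $[\mathscr{C}_{\mathcal{C}}]_{\oplus}$ is $[\mathscr{C}_{\mathcal{C}}(i,i)]_{\oplus}=[\mathcal{C}]_{\oplus}$, which by definition is the free abelian group on isomorphism classes of objects of $\mathcal{C}$ modulo $[Y]-[X]-[Z]$ whenever $Y\cong X\oplus Z$; this coincides literally with the additive group underlying $gr(\mathcal{C})$. The ring structure on $[\mathscr{C}_{\mathcal{C}}]_{\oplus}$ is induced by composition in the 2-category, and by construction $[M]\circ[N]=[M\circ N]=[M\otimes N]$, which is precisely the multiplication $[M][N]=[M\otimes N]$ in $gr(\mathcal{C})$. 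The unit $[I]$ of $gr(\mathcal{C})$ corresponds to the identity 1-morphism of $i$, which is the unit of composition in $\mathscr{C}_{\mathcal{C}}(i,i)$. Hence the identity map on isomorphism classes is the desired ring isomorphism.

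Applying $K\otimes_{\mathbb{Z}} -$ to this isomorphism immediately yields $gr_{K}(\mathcal{C}) \cong K\otimes_{\mathbb{Z}}[\mathscr{C}_{\mathcal{C}}]_{\oplus}$, which by the definition recalled just above Definition \ref{generalgreen} exhibits $\mathscr{C}_{\mathcal{C}}$ (equivalently, $\mathcal{C}$) as a categorification of the one-object $K$-linear category attached to $gr_{K}(\mathcal{C})$. The only genuine step to watch is that the additivity of the bifunctor $\otimes$ is needed to guarantee that composition in $[\mathscr{C}_{\mathcal{C}}]_{\oplus}$ respects the relations $[Y]=[X]+[Z]$ (so that the ring multiplication is well defined on classes); this is ensured by our standing Krull-Schmidt/additive-bifunctor assumption, and is the only nontrivial thing to observe. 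Everything else is a direct unfolding of definitions, so I do not anticipate a genuine obstacle beyond careful bookkeeping of the one-object 2-categorical viewpoint.
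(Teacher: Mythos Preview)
Your proposal is correct and follows essentially the same approach as the paper: the theorem is stated there as a summary of the preceding discussion, which establishes $gr(\mathcal{C})\cong [\mathscr{C}_{\mathcal{C}}]_{\oplus}$ and then tensors with $K$, exactly as you do. You have simply made explicit the verification (matching additive and multiplicative structures, and noting the role of additivity of $\otimes$) that the paper leaves implicit.
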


Some properties of the Green ring $gr(\mathcal C)$ of a monoidal category $\mathcal C$ are listed as
follows:

(i)~ $gr(\mathcal C)$ is an associative ring with identity $[I]$
for the unit object $I$ in $\mathcal C$.
$[I]$ is the identity of  $gr(\mathcal C)$ since $I$ is the unit
object of $\mathcal C$. The associativity of $gr(\mathcal C)$
follows the associativity constraint $a$ of $\mathcal C$; its distributive law is true due to that $\otimes$ is an additive bifunctor.

(ii)~ Under the addition, $gr(\mathcal C)$ is a free abelian group
with a $\mathds Z$-basis $\{[D]:~ D\in {\rm ind}\mathcal C\}$ where
${\rm ind}\mathcal C$ denotes the subcategory consisting of
indecomposable objects in $\mathcal C$.

For an arbitrary $K$-Hopf algebra $H$, all of the finitely generated
module category $_Hmod$, the complex category $Ch(H)$, the homotopy
category $K(H)$, the bounded derived category $D^b(H)$ are monoidal
categories. Meantime, we will construct the new monoidal categories, that is, two full subcategories $Ch^{sh}(H)$ and $D^{sh}(H)$ of $Ch(H)$ and $D^b(H)$ respectively, where $Ob(Ch^{sh}(H))=Ob(D^{sh}(H))=M^{\bullet}[s], s\in\mathds{Z}$, $M\in {\rm ind}(_Hmod)$ and their Green rings are the same, called as the {\em shift ring} $sh(H)$.

In this paper, we will mainly study the categories $_Hmod$,
$D^b(H)$, $Ch^{sh}(H)$ and $D^{sh}(H)$ and their special Green rings, that is , the representation
ring $r(H)$, the derived ring $dr(H)$ and its subring, the shift
ring $sh(H)$.

\subsection{On representation rings of module categories}

Let $H$ be a $K$-Hopf algebra over $K$ with comultiplication
$\Delta$ and counit $\varepsilon$. It is well-known that $_Hmod$ is
a monoidal category with the tensor product $\otimes_K$. Usually,
the Green ring $gr(_Hmod)$ of $_Hmod$ is called the {\em
representation ring} of $H$, denoted by $r(H)$, see \cite{[13]}. In
this case, $r(H)$ is generated by all $[V]$ of indecomposable
$H$-modules $V\in {\rm ind}(H)$ with $[V]+[W]=[V\oplus W]$ and
$[V][W]=[V\otimes_{K}W]$.

Note that

(i)~ the module structure of $V\otimes_{K}W$ is obtained
by $h(v\times w)=\sum\limits_{(h)}(h'v\times h''w)$ for $h\in H,
v\in V, w\in W$;

(ii)~ For the identity $[K]$, $K$ is viewed as an $H$-module by
$h\cdot k= \varepsilon(h) k$ for any $h\in H$, $k\in K$;

(iii)~ The distributive law of $r(H)$ is due to the bilinearity of
$\otimes_K$;

(iv)~ If $H$ is cocommutative, then $r(H)$ is a commutative ring.

\subsection{On derived rings of bounded derived categories}

Recall that in general, for a $K$-algebra $A$, one denotes by
$Ch(A)$ the complex category of $A$,  $K(A)$ the homotopy category
of $A$, $D(A)$ the derived category of $A$ and $D^{b}(A)$ the
bounded derived category of $A$.
 For details on derived categories, see \cite{[14]},\cite{[25]}.

 For a complex $M^{\bullet}=\{M_{i},d_{i}\}$ in these categories, we call $M^{\bullet}$ an {\em $A$-complex} since all $M_i$ are $A$-modules.

 For two indecomposable complexes $M^{\bullet}=\{M_{i},d_{i}\},
N^{\bullet}=\{N_{i},d'_{i}\}$ in $D^{b}(A)$, the tensor product of
chain complexes $M^{\bullet}\otimes N^{\bullet}$ is given in
\cite{[25]}, which is shown as follows:

 Constructing the double complex
$C_{M^{\bullet},N^{\bullet}}=\{M_{i}\otimes N_{j}\}$ together with
the maps
$$d^{h}_{i}=d_{i}\otimes {\rm id}_{N_{j}}: M_{i}\otimes N_{j}\rightarrow M_{i-1}\otimes
N_{j};\;\;\;\;\;\;\;\;\; d^{v}_{j}=(-1)^{i}~{\rm id_{M_{i}}}\otimes
d'_{j}: M_{i}\otimes N_{j}\rightarrow M_{i}\otimes N_{j-1},$$ one
defines its total complex ${\rm
Tot}^{\oplus}(M^{\bullet},N^{\bullet})$ by $({\rm
Tot}^{\oplus}(M^{\bullet},N^{\bullet}))_{n}=\bigoplus\limits_{i+j=n}
M_i\otimes N_j$ with the maps
$\widehat{d_n}=\bigoplus\limits_{i+j=n}(d_{i}^{h}+d_{j}^{v}).$

  We call ${\rm Tot}^{\oplus}(M^{\bullet},N^{\bullet})$ as the {\em tensor product of chain complexes} $M^{\bullet}$ and $N^{\bullet}$, that is,
  define $M^{\bullet}\otimes N^{\bullet}={\rm Tot}^{\oplus}(M^{\bullet},N^{\bullet})$, where $\widehat{d_n}$ is said to be the {\em differentials} of $M^{\bullet}\otimes N^{\bullet}$.

 For an arbitrary algebra $A$, we can only define
$M_i\otimes N_j$ as a left $A$-module via the left $A$-action on
$M_i$, which does not be related with the module structure of $N_j$.
So, we do not think $M^\bullet\otimes N^\bullet$ as complex in
$D^b(A)$, in general.

 Now, we always assume that $A=H$ is a $K$-Hopf algebra with
comultiplication $\Delta$. Then, for two complexes
$M^{\bullet}=\{M_{i},d_{i}\}, N^{\bullet}=\{N_{j},d'_{j}\}\in
D^{b}(H)$, replying on the $H$-module structures of $M_i$ and $N_j$,
we can define canonically an $H$-module structure on all  terms,
that is, for any $a\in H$, $m_{i}\in M_{i}$, $n_{j}\in N_{j}$,
define $a(m_{i}\otimes n_{j})=\sum\limits_{(a)} a'm_{i}\otimes
a''n_{j}$. From this, we have the following:

\begin{Lem}\label{lem2.2}
{\rm Let $M^{\bullet}=\{M_{i},d_{i}\},~
N^{\bullet}=\{N_{i},d'_{i}\}$ be two $H$-complexes in $D^{b}(H)$.
Then the tensor product $M^{\bullet}\otimes N^{\bullet}$ is also an
$H$-complex in $D^{b}(H)$.}
\end{Lem}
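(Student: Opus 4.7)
The plan is to check, term by term, that the total-complex construction $M^{\bullet}\otimes N^{\bullet}={\rm Tot}^{\oplus}(M^{\bullet},N^{\bullet})$ lives in $D^{b}(H)$ rather than just in $D^{b}(K)$. The three things to verify are: (1) each $(M^{\bullet}\otimes N^{\bullet})_{n}=\bigoplus_{i+j=n}M_{i}\otimes N_{j}$ is a well-defined left $H$-module, (2) the differential $\widehat{d_{n}}$ is $H$-linear, and (3) the resulting complex is bounded with $\widehat{d_{n-1}}\widehat{d_{n}}=0$.

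First I would confirm that the formula $a(m_{i}\otimes n_{j})=\sum_{(a)}a'm_{i}\otimes a''n_{j}$ really gives a left $H$-module structure on each $M_{i}\otimes N_{j}$: associativity of the action follows from coassociativity of $\Delta$, and the unit axiom follows from the counit identity $(\varepsilon\otimes\mathrm{id})\Delta=\mathrm{id}=(\mathrm{id}\otimes\varepsilon)\Delta$ applied to $1_{H}$. Taking direct sums over $i+j=n$ then gives the $H$-module structure on $(M^{\bullet}\otimes N^{\bullet})_{n}$. This is essentially the same construction as in the module-category case that underlies $r(H)$, so it is routine.

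Next, and this is the main point, I would show that the horizontal and vertical boundaries are $H$-linear. Since $M^{\bullet},N^{\bullet}\in D^{b}(H)$, every $d_{i}$ and $d'_{j}$ is an $H$-module homomorphism, so for $a\in H$ and $m\in M_{i}$, $n\in N_{j}$,
\[
d^{h}_{i}\bigl(a(m\otimes n)\bigr)=\sum_{(a)}d_{i}(a'm)\otimes a''n=\sum_{(a)}a'd_{i}(m)\otimes a''n=a\bigl(d^{h}_{i}(m\otimes n)\bigr),
\]
and the same computation works for $d^{v}_{j}=(-1)^{i}\mathrm{id}_{M_{i}}\otimes d'_{j}$, the scalar sign $(-1)^{i}$ being central so it does not interfere with the diagonal action. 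Consequently $\widehat{d_{n}}=\bigoplus_{i+j=n}(d_{i}^{h}+d_{j}^{v})$ is $H$-linear as a sum of $H$-linear maps.

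Finally, boundedness of $M^{\bullet}\otimes N^{\bullet}$ is immediate: if $M_{i}=0$ outside $[a,b]$ and $N_{j}=0$ outside $[c,d]$, then $(M^{\bullet}\otimes N^{\bullet})_{n}=0$ outside $[a+c,b+d]$. The identity $\widehat{d_{n-1}}\widehat{d_{n}}=0$ is the standard verification for the total complex of a bicomplex, relying only on $d_{i-1}d_{i}=0$, $d'_{j-1}d'_{j}=0$, and the sign $(-1)^{i}$ in $d^{v}$. I do not anticipate a genuine obstacle here; the only thing to be careful about is keeping track of Sweedler notation when verifying $H$-linearity of $d^{h}$ and $d^{v}$, and noticing that the sign convention in $d^{v}$ is an integer scalar and therefore commutes with the $H$-action.
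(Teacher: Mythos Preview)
Your proposal is correct and follows essentially the same approach as the paper: the paper's proof also reduces everything to checking that $d_{i}^{h}+d_{j}^{v}$ is $H$-linear, via the same Sweedler-notation computation you wrote out (the paper does it for the sum $d_{i}^{h}+d_{j}^{v}$ in one line rather than treating $d^{h}$ and $d^{v}$ separately, but this is immaterial). You are in fact more thorough than the paper, which takes the $H$-module structure on $M_{i}\otimes N_{j}$, boundedness, and the identity $\widehat{d_{n-1}}\widehat{d_{n}}=0$ as known and only verifies $H$-linearity of the differential.
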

\begin{proof}
 We know all $M_i\otimes N_j$ are $H$-modules given above.
So, we need only to claim all differentials
$\widehat{d_n}=\bigoplus\limits_{i+j=n}(d_{i}^{h}+d_{j}^{v})$ are
$H$-module morphisms among $\{M_i\otimes N_j\}$. In particular, it
suffices to prove $d_{i}^{h}+d_{j}^{v}$ are $H$-module morphisms. In
fact, for any $a\in H$, $m_{i}\in M_{i}$, $n_{j}\in N_{j}$,
{\begin{eqnarray*} (d_{i}^{h}+d_{j}^{v})(a(m_{i}\otimes n_{j})) & =
& \sum\limits_{(a)}d_{i}(a'm_{i})\otimes
a''n_{j}+\sum\limits_{(a)}a'm_{i}\otimes
d'_{j}(a''n_{j}) \\
   & = & \sum\limits_{(a)} a'd_{i}(m_{i})\otimes a''n_{j}+\sum\limits_{(a)} a'm_{i}\otimes
a''d'_{j}n_{j} \\
   & = & a((d_{i}^{h}+d_{j}^{v})(m_{i} \otimes n_{j})).
\end{eqnarray*}}
\end{proof}

With this tensor product, the bounded derived category $D^b(H)$ can be viewed as a monoidal category.

Now let us recall that a morphism $f: M^{\bullet}\rightarrow
N^{\bullet}$ is called a \emph{quasi-isomorphism} if it induces
group isomorphisms between all homology groups of $M^{\bullet}$ and
$N^{\bullet}$, written as $M^{\bullet}\stackrel{f}{\cong} N^{\bullet}$.
We denote by $[M^{\bullet}]$ the isomorphism class of a chain
complex $M^{\bullet}$ in $D^{b}(H)$.

The \emph{mapping cone} of $f$, denoted as ${\rm Cone}(f)$,  is
defined as a complex $M^{\bullet}[1]\oplus N^{\bullet} $, with the
differentials \[\left(\begin{array}{cc}
 -d_{i-1} & 0 \\
 f_{i}& d'_{i} \\
\end{array}\right),\;\;\;\; \text{for all}\;\;\; i\in \mathds{Z}.\]

\begin{Lem}\label{lem2.3}
{\rm Let $f: M^{\bullet}\rightarrow M'^{\bullet}$ and $g:
N^{\bullet}\rightarrow N'^{\bullet}$ be two quasi-isomorphisms in
$Ch(H)$, then $f \otimes g: M^{\bullet}\otimes
N^{\bullet}\rightarrow M'^{\bullet}\otimes N'^{\bullet}$ is also a
quasi-isomorphism.}
\end{Lem}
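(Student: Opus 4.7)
The plan is to reduce the statement to the classical Künneth theorem over the field $K$, after observing that the $H$-module structure is irrelevant for detecting quasi-isomorphisms. First I would note that a chain map in $Ch(H)$ is a quasi-isomorphism precisely when the induced $K$-linear maps on all homology groups are bijective, and the $H$-action on the homology groups plays no role in this criterion. Moreover, the underlying $K$-complex of $M^{\bullet}\otimes N^{\bullet}$, obtained by forgetting the $H$-action built from $\Delta$, coincides with the classical tensor product of $K$-complexes, and the underlying $K$-linear chain map of $f\otimes g$ coincides with the classical tensor product of chain maps of $K$-vector spaces (the signs in the differentials $\widehat{d_n}=\bigoplus_{i+j=n}(d_i^h+d_j^v)$ defined before Lemma \ref{lem2.2} match exactly the classical Koszul sign convention).

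Second, since $K$ is a field, every $K$-vector space is flat, so the Künneth formula yields natural isomorphisms
\[ H_n(M^{\bullet}\otimes N^{\bullet}) \;\cong\; \bigoplus_{i+j=n} H_i(M^{\bullet})\otimes_K H_j(N^{\bullet}), \]
under which $H_n(f\otimes g)$ corresponds to $\bigoplus_{i+j=n} H_i(f)\otimes_K H_j(g)$. Since $f$ and $g$ are quasi-isomorphisms, every $H_i(f)$ and $H_j(g)$ is a $K$-linear isomorphism, hence so is each summand, and therefore $H_n(f\otimes g)$ is an isomorphism for every $n$.

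Alternatively, and without invoking Künneth directly, one can factor $f\otimes g=(f\otimes {\rm id}_{N'^{\bullet}})\circ({\rm id}_{M^{\bullet}}\otimes g)$ and handle each factor through the mapping cone criterion: a morphism is a quasi-isomorphism iff its mapping cone is acyclic. Using the description of ${\rm Cone}$ recalled just before Lemma \ref{lem2.3}, one identifies ${\rm Cone}({\rm id}_{M^{\bullet}}\otimes g)\cong M^{\bullet}\otimes {\rm Cone}(g)$ as $K$-complexes, and then uses that the tensor product over $K$ of an acyclic bounded complex with any bounded complex of $K$-vector spaces is again acyclic (flatness of vector spaces). The same argument handles $f\otimes {\rm id}_{N'^{\bullet}}$, and the composition of two quasi-isomorphisms is a quasi-isomorphism.

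The main obstacle is really only bookkeeping: one must verify that the $H$-module structure on $M^{\bullet}\otimes N^{\bullet}$ defined through $\Delta$ is compatible with, but does not interfere with, the underlying $K$-complex structure needed for Künneth or for the mapping cone identification. Since quasi-isomorphism is a condition on $K$-linear homology alone, once this compatibility is observed the conclusion follows from the classical theory together with the boundedness of complexes in $D^b(H)$.
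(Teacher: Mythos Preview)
Your proposal is correct, and your alternative route via mapping cones is essentially the paper's own proof: the paper (implicitly, through its ``by symmetry'' reduction) factors $f\otimes g$ through $f\otimes{\rm id}$ and ${\rm id}\otimes g$, identifies ${\rm Cone}(f\otimes{\rm id})={\rm Cone}(f)\otimes N^{\bullet}$, and then invokes the Acyclic Assembly Lemma from Weibel to deduce exactness of the total complex from exactness of the rows --- which is exactly your flatness argument in different packaging.

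Your primary approach via the K\"unneth formula is a genuinely different and arguably cleaner route: by first observing that quasi-isomorphism is detected purely at the level of underlying $K$-complexes (a point the paper leaves implicit), you reduce immediately to the naturality of the K\"unneth isomorphism over a field, avoiding the mapping cone entirely. The trade-off is that the K\"unneth argument imports a slightly heavier classical theorem, whereas the paper's Acyclic Assembly route stays closer to first principles; on the other hand, your approach makes the role of the ground field transparent. Both arguments rely on boundedness of the complexes involved (the paper through the hypotheses of the Acyclic Assembly Lemma, you in your closing remark), which is all that is needed for the intended application to $D^b(H)$.
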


\begin{proof}
 We know that a morphism between two complexes is a quasi-isomorphism if and only if its mapping cone is exact, see \cite{[25]}.
 So, it is enough to prove that the complex ${\rm Cone}(f\otimes g)$ is exact.  Due to symmetry, we only consider the exactness of  ${\rm Cone}(f\otimes {\rm id})$.
But, we have ${\rm Cone}(f\otimes {\rm id})={\rm Cone}(f)\otimes
N^{\bullet}$. Since the double complex are of row and column
bounded, we only need to prove that each row or column is exact by
the Acyclic Assembly Lemma in \cite{[25]}. Now the row exactness of
the double complex {\rm Cone}$(f)\otimes N^{\bullet}$ follows from
the exactness of {\rm Cone}$(f)$.
\end{proof}

\begin{Def}\label{def2.4}
{\rm Let $H$ be a $K$-Hopf algebra. The \emph{derived ring} $dr(H)$
of a $K$-Hopf algebra $H$ is defined as the Green ring $gr(D^b(H))$
of the monoidal category $D^b(H)$. Concretely, for an bounded
$H$-complex $M^{\bullet}$ in $D^b(H)$, we denote by $[M^{\bullet}]$
the isomorphism class of $M^{\bullet}$.

(1)~ $dr(H)$ is the abelian group generated by all  isomorphism
classes of bounded complexes in $D^{b}(H)$ with addition defined by
the relations $[M^{\bullet}]+[N^{\bullet}]=[M^{\bullet}\oplus
N^{\bullet}]$
 for any bounded complexes $M^\bullet, N^\bullet$ in
$D^{b}(H)$;

(2)~ In $dr(H)$, the multiplication is defined by the relations
$[M^{\bullet}][N^{\bullet}]=[M^{\bullet}\otimes N^{\bullet}]$.}
\end{Def}

Note that

(i)~  The multiplication of $dr(H)$ is
well-defined due to Lemma \ref{lem2.3};

(ii)~ $[K^\bullet]$ is  the identity of $dr(H)$, since
$K^{\bullet}\otimes M^{\bullet}\cong M^{\bullet}\otimes
K^{\bullet}\cong M^{\bullet}$ for any $M^{\bullet}\in D^{b}(H)$,
where $K^\bullet$ is the stalk complex of the trivial $H$-module $K$
at the $0$-position;

(iii)~ The associativity of $dr(H)$ holds since  for any
$L^{\bullet}, M^{\bullet}, N^{\bullet}$ in $D^b(H)$,
$$([L^{\bullet}][M^{\bullet}])[N^{\bullet}]=([L^{\bullet}\otimes
M^{\bullet}])[N^{\bullet}]= [L^{\bullet}\otimes M^{\bullet}\otimes
N^{\bullet}]=[L^{\bullet}]([M^{\bullet}\otimes
N^{\bullet}])=[L^{\bullet}]([M^{\bullet}][N^{\bullet}]).$$

(iv)~ The distributive law of $dr(H)$ holds since
$(L^{\bullet}\oplus M^{\bullet})\otimes N^{\bullet} \cong
(L^{\bullet}\otimes N^{\bullet})\oplus(M^{\bullet}\otimes
N^{\bullet});$

(v)~ The abelian group $dr(H)$ is free with a $\mathds{Z}$-basis
$\{[M^{\bullet}]~|~M^{\bullet}\in {\rm ind}(D^{b}(H))\}$ by the
Krull-Schmidt property of $D^{b}(H)$;

(vi)~ If $H$ is cocommutative, then $dr(H)$ is a commutative ring.

\subsection{On shift rings of (derived) shift categories}

For any indecomposable module $M\in {\rm ind}(_Hmod)$,  its stalk
complex $M^{\bullet}$ and shift objects $M^{\bullet}[n], n\in
\mathds{Z}$ are indecomposable complexes in $Ch(H)$ and $D^{b}(H)$, where $[1]$
means the shift functor. It is clear that for all $M, N\in {\rm
ind}(_Hmod),i,j\in \mathds{Z}$,  in $Ch(H)$ and $D^{b}(H)$, we have
 \begin{equation}\label{addshift}
M^{\bullet}[i]\otimes N^{\bullet}[j]\cong (M^{\bullet}\otimes
N^{\bullet})[i+j]=(M\otimes N)^{\bullet}[i+j].
\end{equation}

 Actually, it is not difficult to understand that the isomorphism in (\ref{addshift}) on stalk complexes can be lifted to any indecomposable objects in  $D^{b}(H)$, that is, for any $M^{\bullet}, N^{\bullet}\in {\rm ind}D^{b}(H)$, $m, n\in \mathbb{Z}$, the isomorphism holds:
\begin{equation}\label{addcomplex}
M^{\bullet}[m]\otimes N^{\bullet}[n]\cong (M^{\bullet}\otimes
N^{\bullet})[m+n].
\end{equation}

Define $Ch^{sh}(H)$ to be the full subcategory of $Ch(H)$ whose
objects are all $M^{\bullet}[n], \forall M\in _Hmod,
n\in\mathds{Z}$. $M^{\bullet}[n]$ is indecomposable in $Ch^{sh}(H)$
if and only if $M\in {\rm ind}(_Hmod)$.  Due to trivially
$M^{\bullet}\otimes N^{\bullet}\cong (M\otimes N)^{\bullet}$ in
$Ch(H)$ and that $Ch^{sh}(H)$ is closed under $\otimes$ by
(\ref{addshift}), we see that $Ch^{sh}(H)$ is a monoidal subcategory
of $Ch(H)$ on the same $\otimes$.

We call the monoidal category $Ch^{sh}(H)$ the {\em shift
category} of $H$, whose Green ring is said to be the {\em shift
ring} of $H$, denoted as $sh(H)$.

Analogously, we define $D^{sh}(H)$ to be the full subcategory of
$D^b(H)$ whose objects are $M^{\bullet}[n]$ for any $M\in _Hmod,
n\in\mathds{Z}$, which is called as the {\em derived shift category}
of $H$. Also, $D^{sh}(H)$ is a monoidal
category.

${\rm Hom}_{D^{b}(H)}(M^{\bullet}[n], M^{\bullet}[n])$ is local when $M$
is an indecomposable $H$-module, since
$${\rm Hom}_{D^{b}(H)}(M^{\bullet}[n], M^{\bullet}[n])\cong {\rm Hom}_{H}(M,
M).$$ Therefore, the indecomposable objects in $D^{sh}(H)$ are all
$M^{\bullet}[n]$ for $M\in {\rm ind}_Hmod, n\in\mathds{Z}$.

Hence, from above, indecomposable objects of $Ch^{sh}(H)$ are the
same with that of $D^{sh}(H)$ , that is,  all $M^{\bullet}[n]$ for
$M\in {\rm ind}(_Hmod), n\in\mathds{Z}$.
 It follows that the Green ring of $D^{sh}(H)$ is the same with that of
$Ch^{sh}(H)$, that is, the shift ring $sh(H)$.

 According to the definition, the following facts are
obtained:
\begin{Fact}\label{fact1}
{\rm Let $H$ be a $K$-Hopf algebra. Then,

(i)~ The shift ring $sh(H)$ is a subring of the derived ring $dr(H)$ generated by $[M^{\bullet}[n]], \forall M\in {\rm ind}(_Hmod),\;\; n\in \mathds{Z}$;

(ii)~ $sh(H)$ is a free abelian group with basis
$[M^{\bullet}[n]]$ for $M\in {\rm ind}(_Hmod),~ n\in \mathds{Z}$;

(iii)~ The shift ring $sh(H)$ has a $\mathds Z$-graded
structure, more precisely, for $n\in \mathds{Z}$, whose component of
$n$-degree $sh(H)_{(n)}$ is the free abelian subgroup generated by
$[M^{\bullet}[n]]$ for all $M\in {\rm ind}(_Hmod)$;

(iv)~ $sh(H)_{(i)}sh(H)_{(j)}=sh(H)_{(i+j)}$ for any $i,j\in\mathds{Z}$;

(v)~ The representation ring $r(H)$ is indeed the $0$-component of $sh(H)$, that is, $r(H)\cong sh(H)_{(0)}$. }
\end{Fact}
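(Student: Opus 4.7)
The plan is to derive all five items from two facts already in place: (a) the general property that the Green ring of an additive Krull-Schmidt monoidal category is free abelian on its indecomposables and is generated as a ring by them, and (b) the key compatibility
\[
M^{\bullet}[m]\otimes N^{\bullet}[n]\cong (M^{\bullet}\otimes N^{\bullet})[m+n]
\]
from equation (\ref{addcomplex}). Since we have already observed that the indecomposables of $D^{sh}(H)$ (equivalently $Ch^{sh}(H)$) are exactly the $M^{\bullet}[n]$ with $M\in{\rm ind}({}_Hmod)$ and $n\in\mathds{Z}$, and each such object remains indecomposable when viewed in $D^b(H)$, all five items reduce to bookkeeping with this basis.

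For (i), I would note that the inclusion $D^{sh}(H)\hookrightarrow D^{b}(H)$ is an additive monoidal functor (closure under $\otimes$ was verified in the discussion preceding the fact); therefore it induces a ring homomorphism $sh(H)\to dr(H)$. Injectivity is immediate because distinct indecomposables of $D^{sh}(H)$ remain pairwise non-isomorphic in $D^b(H)$, so their classes form part of a $\mathds{Z}$-basis of $dr(H)$. The image is by definition the subring generated by the $[M^{\bullet}[n]]$. Item (ii) is then just the general statement in property (ii) of the list following Definition \ref{generalgreen} applied to the Krull-Schmidt monoidal category $D^{sh}(H)$. For (v), the assignment $[M]\mapsto [M^{\bullet}[0]]$ extends $\mathds{Z}$-linearly; it is a ring homomorphism by $M^{\bullet}[0]\otimes N^{\bullet}[0]\cong (M\otimes N)^{\bullet}[0]$ (the $n=m=0$ case of (\ref{addcomplex})), and a bijection onto the $0$-component since the indecomposables on both sides are in canonical bijection.

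For (iii) and (iv), define $sh(H)_{(n)}$ as the free abelian subgroup on $\{[M^{\bullet}[n]]:M\in{\rm ind}({}_Hmod)\}$. The decomposition $sh(H)=\bigoplus_{n\in\mathds{Z}} sh(H)_{(n)}$ is immediate from (ii). The inclusion $sh(H)_{(i)}\cdot sh(H)_{(j)}\subseteq sh(H)_{(i+j)}$ follows from (\ref{addcomplex}): the product of two basis elements $[M^{\bullet}[i]]$ and $[N^{\bullet}[j]]$ equals $[(M^{\bullet}\otimes N^{\bullet})[i+j]]$, and decomposing $M^{\bullet}\otimes N^{\bullet}$ into indecomposables in $D^b(H)$ expresses this as an integer combination of basis elements in degree $i+j$. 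For the reverse inclusion in (iv), take any generator $[N^{\bullet}[i+j]]$ of $sh(H)_{(i+j)}$; since $K^{\bullet}[i]\otimes N^{\bullet}[j]\cong N^{\bullet}[i+j]$, we have $[N^{\bullet}[i+j]]=[K^{\bullet}[i]]\cdot[N^{\bullet}[j]]\in sh(H)_{(i)}\cdot sh(H)_{(j)}$, giving equality.

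There is no essential obstacle here; the only point that requires care is verifying that (\ref{addcomplex}) is used with the precise form needed, namely that the shift functor commutes with $\otimes$ up to the stated natural isomorphism on indecomposables, and that indecomposability transfers between $D^{sh}(H)$ and $D^{b}(H)$ so that the basis of $sh(H)$ embeds as part of the basis of $dr(H)$. Once these are in hand, each item is a short formal argument, and the entire fact amounts to reading off structural consequences of the already-established multiplicative law for shifts.
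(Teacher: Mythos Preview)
Your proposal is correct and follows the same approach as the paper, which in fact gives only a one-line justification: it remarks that (iv) follows from the isomorphism (\ref{addshift}) and that (v) is clear from $[M^{\bullet}]=[M^{\bullet}[0]]$, treating (i)--(iii) as immediate from the definitions. Your write-up simply fills in the details the paper omits; the only minor adjustment is that the paper cites (\ref{addshift}) rather than (\ref{addcomplex}) for (iv), since in $sh(H)$ one is working with shifted stalk complexes of modules, but the argument is unchanged.
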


In these facts, (iv) is from the isomorphism (\ref{addshift}) and (v) can be easily seen from that $[M^{\bullet}]=[M^{\bullet}[0]]$ for all $M\in {\rm ind}(_Hmod)$.

Now we can give the connection among $r(H)$, $sh(H)$, $dr(H)$ as
visualised in Figure $1$.
\begin{figure}[h] \centering
  \includegraphics*[33,704][461,749]{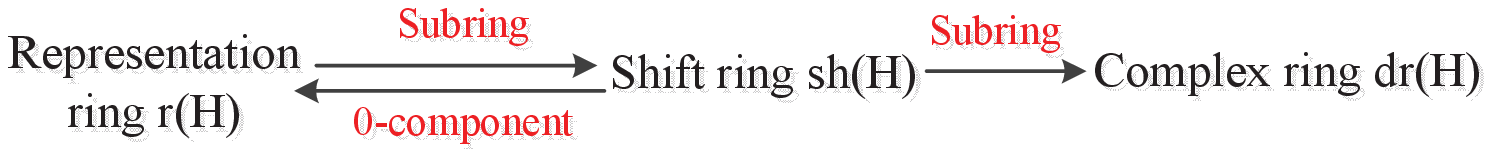}

 {\rm Figure 1 }
\end{figure}
\bigskip

\subsection {Polynomial characterizations }
\label{prop2.2}~
 Throughout this part, $H$ refers a representation-finite $K$-Hopf algebra.

 Up to isomorphism, let $\mathcal M=\{M_1,\cdots,M_t\}$ be the set of all $t$ indecomposable $H$-modules. In particular, say $M_1=K$. The representation ring $r(H)$ can be realized as a quotient of a certain finitely generated free ring, that is, for the \emph{free algebra} $K\langle \mathbf X\rangle$ on a set of indeterminates   $\mathbf X=\{x_{1}, x_{2},\cdots, x_{t}\}$,  we can define an epimorphism of rings $\varphi: K\langle \mathbf X\rangle\rightarrow
r(H)$ satisfying $\varphi(1)=[K]$, $\varphi(X_{i})=[M_{i}]$ for $i=1,\cdots,t$.

  Then for any $1\leq i,j\leq t$, there are non-negative integers $k_{i,j}^{l} (i\leq l\leq t)$ appearing as the structure constants of the representation ring $r(H)$ satisfying
  \begin{equation}\label{strnum}
  [M_{i}][M_{j}]=\sum_{1\leq l\leq t} k_{i,j}^{l}[M_{l}].
   \end{equation}

   Define an ideal $I$ of $K\langle\mathbf X\rangle$ generated by $\{x_{i}x_{j}-\sum_{1\leq l\leq t} k_{i,j}^{l} x_{l}~ |~ 1\leq i,j\leq
t\}$, then the representation ring $r(H)$ can be realized as $$r(H)\cong K\langle \mathbf X\rangle/(I,1-x_{1}).$$

 Moreover, if $H$ is cocommutative, then $r(H)$ becomes a commutative ring due to the definition of representation ring. In this case, the free algebra $K\langle \mathbf X\rangle$ is replaced by the polynomial $K[\mathbf X]$ through the relations $[M_i][M_j]=[M_j][M_i]$, that is, $$r(H)\cong K[\mathbf X]/(I,1-x_{1}).$$

However, in general, using some relations among the $t$ indecomposable modules of a representation-finite $K$-Hopf algebra $H$, it is possible for one to find a positive integer $s$ with $s<t$ and $s$ generators consisting of iso-classes of indecomposable modules to generate $r(H)$ with more relations. Then we will be able to find  $\mathbf Y=\{y_{1},\cdots,y_{s}\}$ so as to get an epimorphism  $K[Y]\rightarrow r(H)$. This is just that we will do in the following sections for some Nakayama truncated algebras $H=KZ_{n}/J^{d}$.

Following  the above discussion, we can give the further conclusion on shift rings and derived rings as follows.
\begin{Thm}\label{thm10.2}
{\rm For a representation-finite $K$-Hopf algebra $H$, assume the number of the indecomposable modules of $H$ is $t$ up to isomorphism. Then, the following statements hold:

(i)~ The shift ring $sh(H)\cong K\langle \mathbf X'\rangle/(I', 1-X_{1})$, where $$\mathbf X'= \{X_{1}[i_{1}],X_{2}[i_{2}],\cdots,X_{t}[i_{t}]~ |~ i_{1},\cdots,i_{t}\in \mathds{Z}\}$$ is the set of the indeterminates,  and in particular, denote $X_{k}\equiv X_{k}[0], \forall 1\leq k\leq t$, the ideal $I'$ is generated by $$\{X_{i}[r]X_{j}[s]-\sum_{1\leq l\leq t} k_{i,j}^{l}X_{l}[r+s]~ |~ 1\leq i,j\leq t, r,s\in \mathds{Z}\}$$ with $k_{i,j}^{l} (1\leq i,j,l\leq t)$ the structure constants of $r(H)$ in (\ref{strnum}).

(ii)~ Let  $\{N_\lambda, \lambda\in\Lambda\}$ be the set of representatives of the orbits of indecomposable complexes in $D^{b}(H)$ under the shift functor $[1]$. Then the derived ring $dr(H)\cong K\langle\mathbf Y'\rangle/(J', 1-Y_{1})$, where $\mathbf Y'=\{Y_{\lambda}[s]~ |~ \lambda\in \Lambda, s\in \mathds{Z}\}$ is the set of the indeterminates, and in particular, $Y_1$ a fixed indeterminate with $1\in \Lambda$, denote $Y_{\lambda}\equiv Y_{\lambda}[0], \forall \lambda\in \Lambda$, the ideal $J'$ is generated by $$\{Y_{\lambda}[s]Y_{\mu}[t]-\sum_{\eta\in\Lambda, r\in\mathds Z} k_{\lambda,\mu}^{\eta,r}Y_{\eta}[r+s+t]~ |~ \lambda,\mu\in \Lambda, s,t\in \mathds{Z}\},$$ and  $k_{\lambda,\mu}^{\eta,r}$ is the multiplicity of $N_{\eta}[r]$ in the decomposition of $N_{\lambda}\otimes N_{\mu}$ for $\lambda,\mu,\eta\in \Lambda, r\in \mathds{Z}$.

(iii)~ Furthermore, when $H$ is a cocommutative, then $sh(H)\cong K[\mathbf X']/(I', 1-X_{1})$ and $dr(H)\cong K[\mathbf Y']/(J', 1-Y_{1})$. }
\end{Thm}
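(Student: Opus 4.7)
The plan is to construct, in each of parts (i) and (ii), an explicit surjective ring homomorphism from the given free algebra onto $sh(H)$ (respectively $dr(H)$), to show that the stated relations $I'$ and $J'$ together with $1-X_1$ (respectively $1-Y_1$) lie in the kernel, and then to prove injectivity by exploiting the $\mathds{Z}$-basis furnished by Fact \ref{fact1}(ii) and by property (v) listed after Definition \ref{def2.4}. Part (iii) then follows by adjoining commutators once cocommutativity is invoked.

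For (i), define $\varphi\colon K\langle \mathbf X'\rangle\to sh(H)$ by $\varphi(X_k[i])=[M_k^\bullet[i]]$ and extend multiplicatively. Surjectivity is immediate from Fact \ref{fact1}(ii). Combining the isomorphism (\ref{addshift}) with the representation-ring equation (\ref{strnum}) gives
\[
\varphi(X_i[r])\cdot \varphi(X_j[s])=[(M_i\otimes M_j)^\bullet[r+s]]=\sum_{l=1}^{t}k_{i,j}^{l}[M_l^\bullet[r+s]],
\]
so $I'\subseteq\ker\varphi$, and $\varphi(X_1)=[K^\bullet]=1$ yields $1-X_1\in\ker\varphi$. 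Injectivity of the induced map is obtained by using the defining relations of the quotient to reduce every word $X_{k_1}[i_1]\cdots X_{k_m}[i_m]$ to a $\mathds{Z}$-linear combination of length-one generators $X_l[i]$; since the $[M_l^\bullet[i]]$ are $\mathds{Z}$-linearly independent in $sh(H)$, the corresponding images $\overline{X_l[i]}$ in the quotient must already be independent, forcing the kernel to equal $(I',1-X_1)$.

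Part (ii) proceeds by the same template applied to the derived ring. Define $\psi\colon K\langle\mathbf Y'\rangle\to dr(H)$ by $\psi(Y_\lambda[s])=[N_\lambda^\bullet[s]]$. Surjectivity holds because the shifts of the orbit representatives $\{N_\lambda : \lambda\in\Lambda\}$ exhaust the indecomposables of $D^b(H)$ by the choice of $\Lambda$, and these classes form a $\mathds{Z}$-basis of $dr(H)$ by property (v) after Definition \ref{def2.4}. The Krull--Schmidt decomposition
\[
N_\lambda^\bullet\otimes N_\mu^\bullet\cong\bigoplus_{\eta\in\Lambda,\,r\in\mathds{Z}}(N_\eta^\bullet[r])^{k_{\lambda,\mu}^{\eta,r}}
\]
is a finite direct sum since tensor products of bounded complexes are bounded, so applying (\ref{addcomplex}) yields
\[
\psi(Y_\lambda[s]Y_\mu[t])=[N_\lambda^\bullet[s]\otimes N_\mu^\bullet[t]]=\sum_{\eta,r}k_{\lambda,\mu}^{\eta,r}[N_\eta^\bullet[r+s+t]],
\]
placing $J'\subseteq\ker\psi$. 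The same reduction-and-basis argument as in (i) then promotes the induced map to an isomorphism.

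For part (iii), when $H$ is cocommutative the flip $m\otimes n\mapsto n\otimes m$ is an $H$-module isomorphism, so property (vi) after Definition \ref{def2.4} and its analogue for the subring $sh(H)$ make both $sh(H)$ and $dr(H)$ commutative; adjoining the commutators $X_i[r]X_j[s]-X_j[s]X_i[r]$ and $Y_\lambda[s]Y_\mu[t]-Y_\mu[t]Y_\lambda[s]$ to $I'$ and $J'$ replaces $K\langle\mathbf X'\rangle$ and $K\langle\mathbf Y'\rangle$ by the polynomial algebras $K[\mathbf X']$ and $K[\mathbf Y']$, and the resulting quotients still surject onto $sh(H)$ and $dr(H)$ with the same basis-comparison argument giving injectivity. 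The main obstacle is the bookkeeping in (ii): one must fix representatives of the shift orbits so that the structure constants $k_{\lambda,\mu}^{\eta,r}$ are unambiguous and almost all zero for each fixed $\lambda,\mu$, and verify that the reduction procedure in the quotient $K\langle\mathbf Y'\rangle/(J',1-Y_1)$ still terminates in this doubly-indexed setting, both of which follow once one exploits the boundedness of tensor products of bounded complexes.
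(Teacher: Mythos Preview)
Your proof is correct and follows essentially the same approach as the paper: both define the evident ring homomorphism $\varphi$ (respectively $\psi$) on generators, invoke the isomorphisms (\ref{addshift}) and (\ref{addcomplex}) to place the stated relations in the kernel, and then identify the kernel with $(I',1-X_1)$ (respectively $(J',1-Y_1)$). The paper's proof is considerably terser, simply asserting that ``it is clear that $\ker\varphi$ is an ideal of $K\langle X'\rangle$ generated by $I'$ and $1-X_1[0]$''; your reduction-and-basis argument spells out exactly why this holds, which the paper leaves to the reader.
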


\begin{proof}
For (i), we define a ring homomorphism $\varphi: K\langle X'\rangle \rightarrow sh(H)$ such that $\varphi(X_{i}[s])=[M_{i}^{\bullet}[s]]$ for any $1\leq i\leq t$, $s\in \mathds{Z}$ and $\varphi(1)=[M_{1}^{\bullet}[0]]$. It is clear that ${\rm
ker}\varphi$ is an ideal of $K\langle X'\rangle$  generated by $I'$ and $1-X_{1}[0]$, then (i) follows at once, by (\ref{addshift}).

 The proof of (ii) is similar to that of (i)  by (\ref{addcomplex}), only note to define a ring homomorphism $\psi: K\langle\mathbf Y'\rangle\rightarrow dr(H)$ such that $\psi(Y_{\lambda}[s])=[N_{\lambda}[s]]$ for any $\lambda\in\Lambda, s\in \mathds{Z}$.

Now (iii) follows from (i),(ii) and the fact that the cocommutativity of $H$ implies the commutativity of $dr(H)$ and $sh(H)$ directly.
\end{proof}

\begin{Rem}
 {\rm In Theorem \ref{thm10.2} (i), the structure constants of the shift ring $sh(H)$ are the same ones with that of the representation ring $r(H)$. The similar matter happens for the derived ring $dr(H)$ in Theorem \ref{thm10.2} (ii), that is, the structure constants $k_{\lambda,\mu}^{\eta,r}$ in $Y_{\lambda}[s]Y_{\mu}[t]=\sum_{\eta\in\Lambda, r\in\mathds Z} k_{\lambda,\mu}^{\eta,r}Y_{\eta}[r+s+t]$ are the same with that in $Y_{\lambda}Y_{\mu}=\sum_{\eta\in\Lambda, r\in\mathds Z} k_{\lambda,\mu}^{\eta,r}Y_{\eta}[r]$.}
\end{Rem}

 Starting from the next section, we will characterize the representation rings of a class of Hopf algebras and further will embed them into the corresponding shift rings and derived rings. For this class of cocommutative Hopf algebras, i.e. the Nakayama truncated algebras,  the polynomial characterizations of their representation rings, shift rings and derived rings will be given by less numbers of indeterminates.

\bigskip
\section{  Hopf algebra structure of Nakayama truncated algebras}

\subsection{ Definition and related indecomposable modules}

The Nakayama truncated algebras, or say, the truncated cycle algebras,
is coming from \cite{[2]}. More precisely, let $Z_{n}$ be the oriented
cycle of length $n$, that is, which has $n$ vertices
$\{v_{\overline{0}},\cdots,v_{\overline{n-1}}\}$ and $n$ arrows
$\{\alpha_{\overline{0}},\cdots,\alpha_{\overline{n-1}}\}$ such that
the origin vertex $s(\alpha_{\overline{i}})$ of the arrow
$\alpha_{\overline{i}}$ equals the terminus vertex
$t(\alpha_{\overline{i-1}})$ of $\alpha_{\overline{i-1}}$.
 Here the below indices of vertices and arrows in $Z_{n}$ are
denoted by using
$\{\overline{0},\overline{1},\cdots,\overline{n-1}\}$ the set of
elements of the residue class of the abelian group
$\mathbb{Z}/n\mathbb{Z}$, since in the sequel we will correspond
the set of the vertices of $Z_{n}$ to the set of the elements of group $G=\mathbb{Z}/n\mathbb{Z}$, see Proposition \ref{prop3.2}. Let $J$ denote the two-sided
ideal of path algebra $KZ_{n}$ generated by all arrows.

The {\em Nakayama truncated algebras} are defined as $KZ_{n}/J^{d}$ for any positive integers $d$, which are called the {\em truncated quotients} of the path algebras $KZ_{n}$ for any $n$.

 For $A=KZ_{n}/J^{d}$, by Theorem V.3.5 in \cite{[1]}, for any indecomposable
$A$-module $M$, there exists an indecomposable projective $A$-module
$P$ and an integer $t$ with $1\leq t\leq d$ such that $M\cong P/{\rm
rad}^{t}P$. Thus there are totally $nd$ indecomposable $A$-modules
$$M(i,\overline{j})=P_{\overline{j}}/{\rm rad}^{i} P_{\overline{j}}, 1\leq i\leq d$$
where $P_{\overline{j}}$ is the indecomposable projective $A$-module
at the vertex $v_{\overline{j}}$. Denote by $S_{\overline{j}}$ the
simple $A-$ module at $v_{\overline{j}}$, notice that
$M(1,\overline{j})=S_{\overline{j}}$ and
$M(d,\overline{j})=P_{\overline{j}}$.

\subsection{ Hopf algebra structure}

It has already known that the Nakayama truncated algebras $KZ_{n}/J^{d}$ are of representation-finite. Now we will give the sufficient and necessary condition for a Nakayama truncated algebra to be a Hopf algebra over a field $K$ of char$K=p$ a prime.

First recall the famous description of the ordinary quiver (or say, Ext-quiver) of a finite dimensional basic Hopf algebra in \cite{[12]}. Let $H$ be a finite dimensional basic Hopf
algebra over $K$, then there exists a finite group $G$ and a {\em weight sequence}
$W=(w_{1},w_{2},\cdots,w_{r})$ of $G$ (i.e., for each $g\in G$, the sequence $W$ and
$(gw_{1}g^{-1},gw_{2}g^{-1},\cdots,gw_{r}g^{-1})$ are the same up to a permutation), such
that
$$H\cong K\Gamma_{G}(W)/I$$ as Hopf algebras for an admissible ideal $I$, where the
quiver $\Gamma_{G}(W)$ is defined
as follows: the vertice set $\Gamma_{G}(W)_{0}=\{v_{g}\}_{g\in G}$ and the arrow set
$\Gamma_{G}(W)_{1}=\{(a_{i},g):v_{g^{-1}}\rightarrow v_{w_{i}g^{-1}}|g\in G,w_{i}\in W\}$, which is just the ordinary quiver of $H$.
We call
this quiver $\Gamma_{G}(W)$ the {\em covering quiver} of $H$ with respect to the weight sequence $W$.

On the contrary, given a finite group $G$ and a weight sequence $W$ of $G$, the Hopf algebra
structure on the path algebra $K\Gamma_{G}(W)$ are introduced in \cite{[12]}. More
precisely, the
comultiplication $\Delta$, the counit $\varepsilon$ and the antipode $S$ are defined
as
follows:
\[\begin{array}{ccc}
\Delta(v_{h})=\sum\limits_{g\in G} v_{hg^{-1}}\otimes v_{g}, &
\varepsilon(v_{h})=\left\{\begin{array}{ll}
1, &\mbox{$h=e$} \\
0, &\mbox{$h\neq e$}
\end{array}\right., &
S(v_{h})=v_{h^{-1}}
\end{array}\]
\[\begin{array}{ccc}
\Delta(\alpha)=\sum\limits_{g\in G}(g\cdot \alpha\otimes v_{g}+v_{g}\otimes \alpha \cdot g),
&
\varepsilon(\alpha)=0,  &
S(\alpha)=-f\cdot \alpha \cdot d
\end{array}\]
and then are extended linearly to be an algebra
$($resp. anti-algebra$)$ morphism respectively
such that $K\Gamma_{G}(W)$ becomes a Hopf algebra,
where $g\cdot \alpha$, $\alpha \cdot g$, $f \cdot \alpha \cdot d$ are induced by the
allowable $KG$-bimodule structure on $K\Gamma_{G}(W)$.

However, in general, $K\Gamma_{G}(W)$ is an infinite dimensional algebra. In order to
obtain finite dimensional Hopf algebras as quotients of the path algebras $K\Gamma_{G}(W)$,
Green
and Solberg constructed two ideals $I_{p}$ and $I_{q}$ and gave the condition
for the ideal
$(I_{p},I_{q})$ generated by $I_{p},I_{q}$ to be a Hopf ideal of $K\Gamma_{G}(W)$.

Actually, let $\alpha:v_{d}\rightarrow v_{f}$, $\beta:v_{d}\rightarrow v_{h}$ be arrows in $\Gamma_{G}(W)$ with the same starting vertex, $f,d,h\in G$. Define
$$r(\alpha)=d^{-1}f, \;\;\;\;\;\;\;\;\; l(\alpha)=fd^{-1}, \;\;\;\;\;\;\;\;\; q(\alpha,\beta)=((r(\alpha))^{-1}\cdot
\beta)\alpha-(\alpha\cdot (l(\beta))^{-1})\beta.$$

On one hand, let $I_{q}$ be the ideal in
$K\Gamma_{G}(W)$
generated by the elements $q(\alpha,\beta)\cdot g$ for all such
$\alpha$ and
$\beta$ as above with  $\alpha \neq \beta$ and all $g\in G$.

On the other hand, for each arrow $\alpha$ in $\Gamma_{G}(W)$, choose a positive integer $m_{\alpha}\geq 2$
so that $m_{\alpha}=m_{g\cdot \alpha}$ for all $g$ in $G$. Denote by $p(\alpha)$ the path defined as follows:
$$ p(\alpha)=\left\{\begin{array}{ll}
{\alpha}^{m_{\alpha}},  &\mbox{ if $\alpha$ is a loop;} \\
\prod\limits_{i=m_{\alpha}-1}^0 ((r(\alpha))^{-i}\cdot \alpha), &\mbox{ otherwise.}
\end{array}\right.$$
Let $I_{p}$ be the ideal generated by the elements $p(\alpha)\cdot g$ for all
arrows $\alpha$ in $\Gamma_{G}(W)$ and $g\in G$. For more details, see \cite{[12]}.
\begin{Thm}{\rm (Corollary 5.4, \cite{[12]}})\label{thm2.1}
{\rm Let $G$ be a finite group and $W=(w_{1},w_{2},\cdots,w_{n})$ a weight sequence with
$n\geq 1$ such that $K\Gamma_{G}(W)$ becomes a Hopf algebra whose Hopf structure given by
an allowable $KG$-bimodule structure on $K\Gamma_{G}(W)$.
Let $I_{p}$ and $I_{q}$ be the ideals in $K\Gamma_{G}(W)$ defined above. Suppose that

(i)~ The subgroup of $G$ generated by the elements of $W$ is abelian;

(ii)~ For all arrows $\alpha$ and $\beta$ starting in the same vertex, there exists
$0\not=c_{\beta}(\alpha)\in K$ such that $\alpha \cdot
l(\beta)=c_{\beta}(\alpha)r(\beta)\cdot
\alpha$ and $c_{\alpha}(\beta)=c_{\beta}(\alpha)^{-1}$ in the case $\alpha \neq \beta$;

(iii)~ $\sum\limits_{\sigma \in T_{s}(m_{\alpha})} \prod\limits_{i\notin \sigma}
c_{\alpha}(\alpha)^{-i}=0$ for all $s=1,\cdots, m_{\alpha}-1$, non-loop arrows $\alpha$ in
$\Gamma_{G}(W)$, where $T_{s}(n)$ denotes the set of all subsets of $\{0,1,\cdots,
n-1\}$ consisting of $s$ elements;

(iv)~ $\binom{m_{\alpha}}{i}=0$ in $K$ for all $i=1,2,\cdots, m_{\alpha}-1$, loops $\alpha$ in $\Gamma_{G}(W)$.
\\
Then $(I_{p},I_{q})$ is a Hopf ideal in $K\Gamma_{G}(W)$ and  the obtained quotient
$K\Gamma_{G}(W)/(I_{q},I_{p})$ is a finite dimensional Hopf algebra.}
\end{Thm}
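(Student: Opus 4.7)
The plan is to verify the three defining properties of a Hopf ideal for $(I_p, I_q)$, namely $\Delta(I) \subseteq I \otimes K\Gamma_G(W) + K\Gamma_G(W) \otimes I$, $\varepsilon(I) = 0$, and $S(I) \subseteq I$, and then to observe that the quotient $K\Gamma_G(W)/(I_p, I_q)$ has bounded path length so is finite-dimensional. The counit condition is essentially immediate: $\varepsilon$ sends every arrow to $0$ and sends $v_h$ to $\delta_{h,e}$, so it vanishes on each $q(\alpha,\beta)\cdot g$ and each $p(\alpha)\cdot g$, since these contain arrow factors. The antipode condition reduces to showing that $S(q(\alpha,\beta)\cdot g)$ and $S(p(\alpha)\cdot g)$ admit the same ideal form once one uses that $S$ reverses composition and acts on arrows by $S(\alpha) = -f\cdot\alpha\cdot d$; together with hypothesis (i) this produces an element of the same shape in $I_q$ or $I_p$ respectively.

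The substantive content is the comultiplication condition. For $I_q$, I would compute $\Delta(\alpha\beta) = \Delta(\alpha)\Delta(\beta)$ for two arrows starting at the same vertex $v_d$, using the explicit formula for $\Delta$ on an arrow and the commutation relation $\alpha\cdot l(\beta) = c_\beta(\alpha) r(\beta)\cdot \alpha$ from hypothesis (ii). The cross terms that arise are precisely what the defining expression $q(\alpha,\beta) = (r(\alpha)^{-1}\cdot\beta)\alpha - (\alpha\cdot l(\beta)^{-1})\beta$ is designed to cancel; hypothesis (i) ensures the various group conjugations commute so the identification works globally. The outcome is $\Delta(q(\alpha,\beta))\in I_q\otimes K\Gamma_G(W) + K\Gamma_G(W)\otimes I_q$.

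For $I_p$, I would argue by induction on the length of the partial product in $p(\alpha)$. In the non-loop case $p(\alpha) = \prod_{i=m_\alpha-1}^{0}(r(\alpha)^{-i}\cdot\alpha)$, iterating the arrow formula for $\Delta$ and collecting terms using hypothesis (ii) gives an expansion of $\Delta(p(\alpha))$ whose ``mixed'' components of bidegree $(s, m_\alpha - s)$ for $1 \leq s \leq m_\alpha - 1$ carry the coefficient $\sum_{\sigma\in T_s(m_\alpha)}\prod_{i\notin\sigma}c_\alpha(\alpha)^{-i}$; hypothesis (iii) kills all these, leaving only $p(\alpha)\otimes v_? + v_?\otimes p(\alpha)$ modulo $I_q$. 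In the loop case the analogous expansion produces ordinary binomial coefficients $\binom{m_\alpha}{i}$, which vanish by hypothesis (iv). Finally, finite-dimensionality of $K\Gamma_G(W)/(I_p, I_q)$ follows because $I_p$ bounds the length of every path starting with a given arrow by $m_\alpha$, while $I_q$ rewrites paths through the commutation relations into a canonical normal form over a finite vertex set.

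The main obstacle is the inductive $\Delta$-computation producing the $q$-analog binomial identities in step for $I_p$: one must carefully track both the $KG$-bimodule action and the accumulated scalar factors $c_\alpha(\alpha)^{-i}$ across each successive application of $\Delta(\alpha) = \sum_g(g\cdot\alpha\otimes v_g + v_g\otimes\alpha\cdot g)$, and then recognize the resulting coefficient sum as the expression appearing in hypothesis (iii). Once this identification is set up cleanly (for instance by choosing an ordered enumeration of the factors in $p(\alpha)$ and recording at each step which factors have been sent to the left tensorand), the vanishing is essentially a bookkeeping consequence of (iii) and (iv).
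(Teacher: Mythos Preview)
The paper does not prove this statement at all: Theorem~\ref{thm2.1} is quoted verbatim as Corollary~5.4 of Green--Solberg~\cite{[12]} and is used as a black box to establish Proposition~\ref{prop3.2}. There is therefore no proof in the paper to compare your proposal against.

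That said, your outline is the natural strategy and is essentially what Green and Solberg carry out in~\cite{[12]}: one checks that $(I_p,I_q)$ is closed under $\Delta$, $\varepsilon$, and $S$, with the real work lying in the comultiplication computation for $p(\alpha)$, where hypotheses (iii) and (iv) are exactly the vanishing conditions on the $q$-binomial (resp.\ binomial) coefficients that appear in the mixed tensor terms. Your identification of the main obstacle---tracking the scalar factors through the iterated coproduct to recognize the sum $\sum_{\sigma\in T_s(m_\alpha)}\prod_{i\notin\sigma}c_\alpha(\alpha)^{-i}$---is accurate. If you want to write this up in full, you should consult~\cite{[12]} directly rather than the present paper, since the details (in particular the role of $I_q$ in reducing the $\Delta(p(\alpha))$ computation and the precise form of the antipode check) are not reproduced here.
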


 Now, we can give the sufficient and necessary condition for a Nakayama truncated algebra to be a Hopf algebra with Hopf structure given in Theorem \ref{thm2.1} as follows.

\begin{Prop}\label{prop3.2}
{\rm For a Nakayama truncated algebra $KZ_{n}/J^{d}$ over a field
$K$ of characteristic $p$, let
$G=\mathbb{Z}/n\mathbb{Z}=\{\overline{0},\overline{1},\cdots,\overline{n-1}\}$ be the
residue class group modulo $n$ with a weight sequence
$W=\{\overline{1}\}$. Then,

(i)~ the covering quiver $\Gamma_{G}(W)$ is just the $n$-th oriented cycle $Z_{n}$;

(ii)~ $KZ_{n}/J^{d}$ is a cocommutative Hopf algebra about the covering quiver $Z_n$
 with Hopf structure given in Theorem \ref{thm2.1} if
and only if $d=p^m\leq n$ for some $m>0$. }
\end{Prop}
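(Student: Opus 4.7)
My plan is to invoke Theorem \ref{thm2.1} with the specific data $G = \mathbb{Z}/n\mathbb{Z}$ and $W = \{\overline{1}\}$, and to check that in this cyclic setting its four conditions together with the relations $(I_p, I_q)$ specialize exactly to $d = p^m \leq n$. Part (i) is a direct unpacking: the $n$ vertices $\{v_g\}_{g\in G}$ match those of $Z_n$, and the single weight $\overline{1}$ produces, for each $g\in G$, an arrow $(a_1,g): v_{g^{-1}} \to v_{\overline{1}g^{-1}}$; after the substitution $h = g^{-1}$ this becomes one arrow $v_h \to v_{h+\overline{1}}$ per $h \in G$, exactly reproducing $Z_n$.

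For part (ii), I would first identify $(I_p, I_q)$ with $J^d$. Since $|W|=1$, each vertex carries a unique outgoing arrow, so $q(\alpha,\beta)$ (defined only for $\alpha\neq\beta$) never applies and $I_q = 0$. All arrows lie in a single $G$-orbit, so $m_\alpha$ may be taken to be a common integer $d$, and $p(\alpha)\cdot g$ runs over all paths of length $d$ as $g$ varies in $G$, giving $I_p = J^d$. Hence $K\Gamma_G(W)/(I_p,I_q) = KZ_n/J^d$, and it remains to verify the hypotheses of Theorem \ref{thm2.1}. Condition (i) holds since $\langle\overline{1}\rangle = G$ is abelian. Condition (ii) is vacuous for $\alpha\neq\beta$, and in the diagonal case $\alpha=\beta$ the cocommutativity of the intended $\Delta$ forces $g\cdot\alpha = \alpha\cdot g$ for all $g$, hence $c_\alpha(\alpha)=1$. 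Condition (iv) is vacuous because $d\geq 2$ and $d\leq n$ give $n\geq 2$, so no arrow is a loop. The decisive condition (iii), after substituting $c_\alpha(\alpha)=1$, collapses $\sum_{\sigma\in T_s(m_\alpha)}\prod_{i\notin\sigma} c_\alpha(\alpha)^{-i}$ to $|T_s(m_\alpha)| = \binom{d}{s}$, so the requirement becomes $\binom{d}{s}\equiv 0 \pmod p$ for all $1 \leq s \leq d-1$; by Lucas's theorem this is equivalent to $d = p^m$ for some $m\geq 1$. Cocommutativity of the resulting Hopf structure then follows from the abelianness of $G$ via the symmetry of $\Delta(v_h) = \sum_g v_{hg^{-1}} \otimes v_g$ and the normalization $g\cdot\alpha = \alpha\cdot g$ on arrows.

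The principal obstacle is the analysis of condition (iii): one must pin down the allowable $KG$-bimodule structure on $K\Gamma_G(W)$ so that the scalars $c_\alpha(\alpha)$ are genuinely equal to $1$ (the cocommutativity hypothesis being the input that forces this), and then carry out the arithmetic reduction of the full sum to a single binomial coefficient before invoking Lucas's theorem. The bound $d\leq n$ enters as a compatibility constraint ensuring that the covering-quiver path $p(\alpha)$ of length $d$ is the correct generator of the Nakayama truncation $J^d$ on the cyclic quiver $Z_n$; once these points are settled, the remaining conditions of Theorem \ref{thm2.1} and the Hopf-algebra/cocommutativity claims follow immediately from its construction.
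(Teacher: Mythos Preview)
Your proposal is correct and follows essentially the same route as the paper: identify $I_q=0$ and $I_p=J^d$, then reduce the hypotheses of Theorem~\ref{thm2.1} to the vanishing of $\binom{d}{s}$ in $K$ for $1\le s\le d-1$. The only cosmetic differences are that you invoke Lucas's theorem where the paper cites the gcd fact $\gcd\bigl(\binom{d}{1},\dots,\binom{d}{d-1}\bigr)=p$ from \cite{[22]}, and you justify $I_q=0$ via the uniqueness of the outgoing arrow at each vertex rather than the paper's (somewhat loose) phrase ``since $Z_n$ has no loop''; your formulation is in fact the more accurate one.
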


\begin{proof}
(i)~ It follows directly from the definition of $\Gamma_{G}(W)$.

(ii)~ {\bf ``Only if":~~ } In the oriented cycle
$\Gamma_{G}(W)=Z_{n}$, the vertex set $(Z_{n})_{0}=\{v_{g}\}_{g\in
G}$ and the arrow set $(Z_{n})_{1}=\{\alpha_{g}:v_{g}\rightarrow
v_{\overline{1}+g}~|~g\in G\}$. Then the comultiplication, counit,
antipode of the Hopf structure of $KZ_n$ are defined respectively as
follows:
\[\begin{array}{ccc}
\Delta(v_{h})=\sum\limits_{g_{1}+g_{2}=h,~ g_{1},g_2\in G} v_{g_{1}}\otimes v_{g_{2}},
&
\Delta(\alpha_{h})=\sum\limits_{g_{1}+g_{2}=h,~ g_{1},g_2\in G}(\alpha_{g_{1}}\otimes
v_{g_{2}}+v_{g_{1}}\otimes \alpha_{g_{2}})
\end{array}\]

Since $Z_{n}$ has no loop, we have $I_{q}=0$.

And, for each $\alpha$ in $Z_{n}$, choose $m_{\alpha}=d$. Hence we obtain that $I_{p}$ is the ideal generated by the elements $(\prod\limits_{i=d-1}^0 ((r(\alpha))^{-i}\cdot \alpha))\cdot g$ for all arrows $\alpha$ in $Z_{n}$ and $g\in G$, that is, $I_{p}=J^{d}$.

Therefore, $(I_{p},I_{q})=J^d$ and then by Theorem \ref{thm2.1}, which is a Hopf ideal in $K\Gamma_{G}(W)$ if and only if
$\sum\limits_{\sigma \in T_{s}(d)} \prod\limits_{i\notin \sigma} 1^{-i}=0$ for all $s=1,\cdots, d-1$ and all arrows $\alpha$ in
$\Gamma_{G}(W)$, that is,
\begin{equation}\label{char}
\binom{d}{i}=0~~ \text{in}~~ K,~~ \forall 1\leq i\leq d-1
\end{equation}
since $\sum\limits_{\sigma \in T_{s}(d)}
\prod\limits_{i\notin \sigma} 1^{-i}=\binom{d}{i}$.

Now by the fact in \cite{[22]} that ${\rm gcd}(\binom{d}{1},\binom{d}{2},\cdots,\binom{d}{d-1})=
\left\{\begin{array}{ll}
p, &\mbox{$d$ {\rm is the power of prime} $p$;} \\
1, &\mbox{{\rm otherwise,}}
\end{array}\right.$
 we have $d=p^m$ for some prime $p$, $m>0$. The conclusion follows at once.

{\bf ``If":~~} If $d=p^m\leq n$ holds for some $m>0$, then the formula (\ref{char}) holds and thus,
 $(I_{p},I_{q})=J^d$ is a Hopf ideal in $K\Gamma_{G}(W)$ such that $KZ_n/J^d$ becomes a Hopf algebra with Hopf structure in Theorem \ref{thm2.1}, whose  cocommutativity follows directly from the definition of the comultiplication $\Delta$.
\end{proof}

\bigskip
\section{{ Elementary Lemma }}

First we may fix some notations, denote the basis of
$M(i,\overline{j})$ as $\{v_{\overline{j}},
\alpha_{\overline{j}},\alpha_{\overline{j+1}}\alpha_{\overline{j}},\cdots,
\alpha_{\overline{j+i-2}}\cdots
\alpha_{\overline{j+1}}\alpha_{\overline{j}}\}$, and abbreviated
them as $v_{\overline{j}}^{0}=v_{\overline{j}}$,
$v_{\overline{j}}^{1}=\alpha_{\overline{j}}$,$\cdots$,
$v_{\overline{j}}^{\overline{i-1}}=\alpha_{\overline{j+i-2}}\cdots
\alpha_{\overline{j+1}}\alpha_{\overline{j}}$, respectively. The
following Figure 2 may be more directly used to express the module
$M(i,\overline{j})$.

\begin{figure}[h] \centering
  \includegraphics*[109,651][451,767]{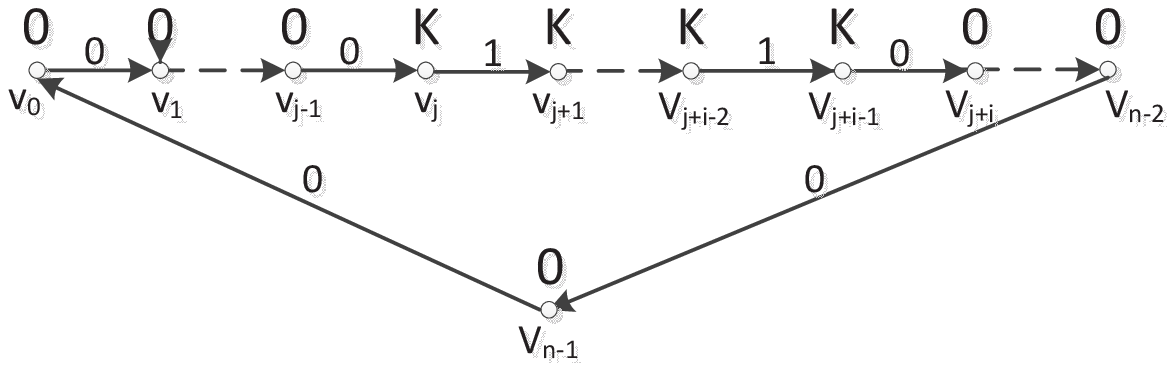}

 {\rm Figure 2 }
\end{figure}

From now on, we always assume that $d=p^m$ for some prime $p$, $m>0$
and ${\rm char} K=p$; ${\rm dim}$,$\otimes$ and ${\rm Hom}$ stand
for ${\rm dim}_{K}$, $\otimes_{K}$ and ${\rm Hom}_{K}$,
respectively. All modules stand for $(KZ_{n}/J^{d})$-modules. Denote
{\bf dim} $X$ as the dimension vector of a $A$-module $X$ and $({\bf
dim} X)_{l}$ as its component at the vertex $v_{\overline{l}}$. The
following two lemmas are easily from a straightforward verification.

\begin{Lem}\label{lem3.1}
{\rm For all $1\leq i,i'\leq d, 0\leq j,j'\leq n-1$, $({\bf dim}
(M(i,\overline{j})\otimes M(i',\overline{j'})))_{\overline{l}}$ is
equal to the number of the partitions of
$\overline{l}=\overline{l_{1}}+\overline{l_{2}}$ under the condition
$({\bf dim} M(i,\overline{j}))_{\overline{l_{1}}}\neq 0$ and $({\bf
dim} M(i',\overline{j'}))_{\overline{l_{2}}}\neq 0$. }
\end{Lem}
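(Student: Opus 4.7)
The plan is to compute the dimension vector of the tensor product directly from its $K$-basis and the $KZ_n/J^d$-action coming from the comultiplication $\Delta$ described in Proposition \ref{prop3.2}.

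First I would fix the $K$-bases: $M(i,\overline{j})$ has the basis $\{v_{\overline{j}}^{0},v_{\overline{j}}^{1},\dots,v_{\overline{j}}^{i-1}\}$ introduced before the lemma, and similarly $M(i',\overline{j'})$ has the basis $\{v_{\overline{j'}}^{0},\dots,v_{\overline{j'}}^{i'-1}\}$. Each basis vector $v_{\overline{j}}^{s}$ lies at the vertex $\overline{j+s}$, i.e.\ $v_{\overline{j+s}}\cdot v_{\overline{j}}^{s}=v_{\overline{j}}^{s}$ and $v_{\overline{k}}\cdot v_{\overline{j}}^{s}=0$ for $\overline{k}\neq\overline{j+s}$. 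Since $i\leq d\leq n$, the indices $\overline{j},\overline{j+1},\dots,\overline{j+i-1}$ are pairwise distinct in $G=\mathbb{Z}/n\mathbb{Z}$, so $({\bf dim}\,M(i,\overline{j}))_{\overline{k}}\in\{0,1\}$ and equals $1$ precisely when $\overline{k}=\overline{j+s}$ for some $0\leq s\leq i-1$; the analogous statement holds for $M(i',\overline{j'})$.

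Next I would apply the comultiplication formula $\Delta(v_{\overline{l}})=\sum_{\overline{l_{1}}+\overline{l_{2}}=\overline{l}}v_{\overline{l_{1}}}\otimes v_{\overline{l_{2}}}$ from Proposition \ref{prop3.2} to compute the action of the idempotent $v_{\overline{l}}$ on the tensor product basis $\{v_{\overline{j}}^{s}\otimes v_{\overline{j'}}^{s'}\}$. We get
\[
v_{\overline{l}}\cdot\bigl(v_{\overline{j}}^{s}\otimes v_{\overline{j'}}^{s'}\bigr)
=\sum_{\overline{l_{1}}+\overline{l_{2}}=\overline{l}}
\bigl(v_{\overline{l_{1}}}v_{\overline{j}}^{s}\bigr)\otimes
\bigl(v_{\overline{l_{2}}}v_{\overline{j'}}^{s'}\bigr),
\]
which equals $v_{\overline{j}}^{s}\otimes v_{\overline{j'}}^{s'}$ exactly when $\overline{l_{1}}=\overline{j+s}$ and $\overline{l_{2}}=\overline{j'+s'}$ and is $0$ otherwise. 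Consequently, the tensor basis vectors lying at vertex $\overline{l}$ are in bijection with pairs $(s,s')$ satisfying $\overline{j+s}+\overline{j'+s'}=\overline{l}$.

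Finally I would translate this count into the stated combinatorial form: the map $(s,s')\mapsto(\overline{l_{1}},\overline{l_{2}}):=(\overline{j+s},\overline{j'+s'})$ is a bijection between the admissible pairs $(s,s')$ and the partitions $\overline{l}=\overline{l_{1}}+\overline{l_{2}}$ with $({\bf dim}\,M(i,\overline{j}))_{\overline{l_{1}}}\neq 0$ and $({\bf dim}\,M(i',\overline{j'}))_{\overline{l_{2}}}\neq 0$, because of the $0/1$-valuedness noted above. Summing over the admissible pairs gives the formula for $({\bf dim}(M(i,\overline{j})\otimes M(i',\overline{j'})))_{\overline{l}}$. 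There is no serious obstacle here; the only subtlety worth emphasising is the bound $i\leq d\leq n$, which ensures injectivity of the vertex-assignment $s\mapsto\overline{j+s}$ and hence the bijection in the last step.
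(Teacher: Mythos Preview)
Your proof is correct and follows essentially the same approach as the paper: fix the obvious tensor basis, compute the action of the idempotent $v_{\overline{l}}$ via the comultiplication $\Delta(v_{\overline{l}})=\sum v_{\overline{l_1}}\otimes v_{\overline{l_2}}$, and count the surviving basis vectors. Your explicit remark that $i\leq d\leq n$ forces the dimension vectors to be $\{0,1\}$-valued, so that $(s,s')\mapsto(\overline{j+s},\overline{j'+s'})$ is a genuine bijection, is a point the paper leaves implicit.
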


\begin{proof}
Choose the basis of $M(i,\overline{j})$ and $M(i',\overline{j'})$ as
$\{v_{\overline{j}}^{\overline{0}},v_{\overline{j}}^{\overline{1}},\cdots,
v_{\overline{j}}^{\overline{i-1}}\}$ and
$\{v_{\overline{j'}}^{\overline{0}},v_{\overline{j'}}^{\overline{1}},\cdots,
v_{\overline{j'}}^{\overline{i'-1}}\}$, respectively, then the basis
of $M(i,\overline{j})\otimes M(i',\overline{j'})$ is
$\{v_{\overline{j}}^{\overline{k}} \otimes
v_{\overline{j'}}^{\overline{k'}}|0\leq k\leq i-1,0\leq k'\leq
i'-1\}$.

Additionally, $({\bf dim} (M(i,\overline{j})\otimes
M(i',\overline{j'})))_{\overline{l}}={\rm dim}
(v_{\overline{l}}\cdot (M(i,\overline{j})\otimes
M(i',\overline{j'})))$, and

 \[\begin{array}{ccl} v_{\overline{l}}\cdot
(v_{\overline{j}}^{\overline{k}}\otimes
v_{\overline{j'}}^{\overline{k'}}) & = &
\Delta(v_{\overline{l}})(v_{\overline{j}}^{\overline{k}}\otimes
v_{\overline{j'}}^{\overline{k'}})=
\sum\limits_{g_{1}+g_{2}=\overline{l}, g_{1}, g_{2}\in G}
(v_{g_{1}}\otimes v_{g_{2}})(v_{\overline{j}}^{\overline{k}}\otimes
v_{\overline{j'}}^{\overline{k'}}) \\
  & = & \sum\limits_{g_{1}+g_{2}=\overline{l}, g_{1},g_{2}\in G} \delta_{g_{1},\overline{j+k}}
  \delta_{g_{2},\overline{j'+k'}}(v_{\overline{j}}^{\overline{k}}\otimes v_{\overline{j'}}^{\overline{k'}}) \\
  & = & \left\{\begin{array}{ll}
v_{\overline{j}}^{\overline{k}}\otimes v_{\overline{j'}}^{\overline{k'}}, &\mbox{if $g_{1}=\overline{j+k},g_{2}=\overline{j'+k'}$} \\
0, &\mbox{otherwise}
\end{array}\right.
\end{array}\]

where $\delta_{i,j}$ is the Kronecker symbol. Thus \\$({\bf dim}
M(i,\overline{j})\otimes M(i',\overline{j'}))_{\overline{l}}=\sharp
\{$the partition of
$l=j+k+j'+k'$ such that $0\leq k\leq i-1, 0\leq k'\leq i'-1$$\}$.

Denote $l_{1}=j+k$, $l_{2}=j'+k'$.  By the definition of
$M(i,\overline{j})$, $({\bf dim}
M(i,\overline{j}))_{\overline{l_{1}}}\neq 0$ if and only if $j\leq
l_{1}\leq j+i-1$, $({\bf dim}
M(i',\overline{j'}))_{\overline{l_{2}}}\neq 0$ if and only if
$j'\leq l_{2}\leq j'+i'-1$. Then we have
$l=(j+k)+(j'+k')=l_{1}+l_{2}$.

Thus, the number of the partition of $l=j+k+j'+k'$ with $0\leq k\leq
i-1, 0\leq k'\leq i'-1$ is equal to the number of the partition of
$l=l_{1}+l_{2}$ with $j\leq l_{1}\leq j+i-1, j'\leq k'\leq j'+i'-1$
and moreover, is equal to the partitions of $l=l_{1}+l_{2}$
satisfying $({\bf dim} M(i,\overline{j}))_{\overline{l_{1}}}\neq 0$
and $({\bf dim} M(i',\overline{j'}))_{\overline{l_{}}}\neq 0\}$.
\end{proof}

Following this result, in the case that $({\bf dim}
M(i,\overline{j})\otimes M(i',\overline{j'}))_{\overline{l}}\neq 0$,
the basis of $(M(i,\overline{j})\otimes
M(i',\overline{j'}))_{\overline{l}}$ is that
$$\{v_{\overline{j}}^{\overline{l_{1}-j}}\otimes v_{\overline{j'}}^{\overline{l_{2}-j'}}~|~l_{1}+l_{2}=l,({\bf dim} M(i,\overline{j}))_{\overline{l_{1}}}\neq 0,
({\bf dim} M(i',\overline{j'}))_{\overline{l_{2}}}\neq 0\}.$$

\begin{Lem}\label{lem3.2}
{\rm If $({\bf dim} M(i,\overline{j}))_{\overline{l_{1}}}\neq 0$,
$({\bf dim} M(i',\overline{j'}))_{\overline{l_{2}}}\neq 0$ and
$l=l_{1}+l_{2}$, then in the module $M(i,\overline{j})\otimes
M(i',\overline{j'})$, it holds that
$$\alpha_{\overline{l}}\cdot (v_{\overline{j}}^{\overline{l_{1}-j}}\otimes
v_{\overline{j'}}^{\overline{l_{2}-j'}})=\alpha_{\overline{l_{1}}}(v_{\overline{j}}^{\overline{l_{1}-j}})\otimes
v_{\overline{j'}}^{\overline{l_{2}-j'}}+v_{\overline{j}}^{\overline{l_{1}-j}}\otimes
\alpha_{\overline{l_{2}}}(v_{\overline{j'}}^{\overline{l_{2}-j'}}).$$
}
\end{Lem}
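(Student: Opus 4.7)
The plan is to compute the action of $\alpha_{\overline{l}}$ directly using the Hopf algebra comultiplication formula established in the proof of Proposition \ref{prop3.2}, namely
\[
\Delta(\alpha_{\overline{l}})=\sum_{g_{1}+g_{2}=\overline{l},\, g_{1},g_{2}\in G}\bigl(\alpha_{g_{1}}\otimes v_{g_{2}}+v_{g_{1}}\otimes \alpha_{g_{2}}\bigr).
\]
Applying this to the tensor $v_{\overline{j}}^{\overline{l_{1}-j}}\otimes v_{\overline{j'}}^{\overline{l_{2}-j'}}$ yields a sum of four families of terms:
\[
\sum_{g_{1}+g_{2}=\overline{l}}\Bigl(\alpha_{g_{1}}v_{\overline{j}}^{\overline{l_{1}-j}}\otimes v_{g_{2}}v_{\overline{j'}}^{\overline{l_{2}-j'}}+v_{g_{1}}v_{\overline{j}}^{\overline{l_{1}-j}}\otimes \alpha_{g_{2}}v_{\overline{j'}}^{\overline{l_{2}-j'}}\Bigr).
\]

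Next I would invoke the obvious vertex constraints in the path algebra $KZ_{n}/J^{d}$. The basis element $v_{\overline{j}}^{\overline{l_{1}-j}}$ is supported at the vertex $\overline{l_{1}}$, so $v_{g_{1}}v_{\overline{j}}^{\overline{l_{1}-j}}$ is nonzero (and equals $v_{\overline{j}}^{\overline{l_{1}-j}}$) exactly when $g_{1}=\overline{l_{1}}$, while $\alpha_{g_{1}}v_{\overline{j}}^{\overline{l_{1}-j}}$ is nonzero (and equals $\alpha_{\overline{l_{1}}}(v_{\overline{j}}^{\overline{l_{1}-j}})$) exactly when again $g_{1}=\overline{l_{1}}$, since $\alpha_{g_{1}}$ starts at the vertex $g_{1}$. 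The analogous statement holds on the second tensor factor with $g_{2}=\overline{l_{2}}$. Combined with the sum constraint $g_{1}+g_{2}=\overline{l}=\overline{l_{1}+l_{2}}$, exactly one pair $(g_{1},g_{2})=(\overline{l_{1}},\overline{l_{2}})$ contributes, and the four-family sum collapses to
\[
\alpha_{\overline{l_{1}}}(v_{\overline{j}}^{\overline{l_{1}-j}})\otimes v_{\overline{j'}}^{\overline{l_{2}-j'}}+v_{\overline{j}}^{\overline{l_{1}-j}}\otimes \alpha_{\overline{l_{2}}}(v_{\overline{j'}}^{\overline{l_{2}-j'}}),
\]
which is exactly the claimed identity.

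No step here looks hard, but the one worth handling carefully is the bookkeeping of indices modulo $n$: one must check that the unique representatives $g_{1}=\overline{l_{1}}$, $g_{2}=\overline{l_{2}}$ in $G=\mathbb{Z}/n\mathbb{Z}$ are consistent with $g_{1}+g_{2}=\overline{l}$ even when the integer sum $l_{1}+l_{2}$ wraps around $n$. This is automatic because the equation $l=l_{1}+l_{2}$ is taken in $\mathbb{Z}/n\mathbb{Z}$ via the indexing convention, so the single surviving term in the Delta-sum matches the prescribed $\overline{l}$. The remainder of the verification is then a routine unwinding of the definitions of $\alpha_{\overline{l_{1}}}v_{\overline{j}}^{\overline{l_{1}-j}}$ (which equals $v_{\overline{j}}^{\overline{l_{1}-j+1}}$ when $\overline{l_{1}-j+1}\leq i-1$ and zero otherwise) and similarly for the second factor, but no extra case analysis is required for the identity itself.
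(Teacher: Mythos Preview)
Your proposal is correct and follows essentially the same approach as the paper: apply the comultiplication formula $\Delta(\alpha_{\overline{l}})=\sum_{g_{1}+g_{2}=\overline{l}}(\alpha_{g_{1}}\otimes v_{g_{2}}+v_{g_{1}}\otimes \alpha_{g_{2}})$ to the tensor, then use the vertex constraints to reduce the sum to the single surviving pair $(g_{1},g_{2})=(\overline{l_{1}},\overline{l_{2}})$. The paper's proof is slightly terser (it writes the Kronecker delta $\delta_{g_{1},\overline{l_{1}}}\delta_{g_{2},\overline{l_{2}}}$ directly), but the logic is identical.
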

\begin{proof}
\[\begin{array}{ccl}
\alpha_{\overline{l}}\cdot
(v_{\overline{j}}^{\overline{l_{1}-j}}\otimes
v_{\overline{j'}}^{\overline{l_{2}-j'}}) & = &
\Delta(\alpha_{\overline{l}})(v_{\overline{j}}^{\overline{l_{1}-j}}\otimes v_{\overline{j'}}^{\overline{l_{2}-j'}}) \\
 & = & \sum\limits_{g_{1}+g_{2}=\overline{l},  g_{1},g_2\in G}(\alpha_{g_{1}}\otimes
 v_{g_{2}}+v_{g_{1}}\otimes \alpha_{g_{2}})(v_{\overline{j}}^{\overline{l_{1}-j}}\otimes v_{\overline{j'}}^{\overline{l_{2}-j'}}) \\
 & = & \sum\limits_{g_{1}+g_{2}=\overline{l},  g_{1},g_2\in G}
 \delta_{g_{1},\overline{l_{1}}}\delta_{g_{2},\overline{l_{2}}}(\alpha_{\overline{l_{1}}}(v_{\overline{j}}^{\overline{l_{1}-j}})\otimes
 v_{\overline{j'}}^{\overline{l_{2}-j'}}+ v_{\overline{j}}^{\overline{l_{1}-j}}\otimes \alpha_{g_{2}}(v_{\overline{j'}}^{\overline{l_{2}-j'}})) \\
 & = & \alpha_{\overline{l_{1}}}(v_{\overline{j}}^{\overline{l_{1}-j}})\otimes v_{\overline{j'}}^{\overline{l_{2}-j'}}+v_{\overline{j}}^{\overline{l_{1}-j}}\otimes
 \alpha_{\overline{l_{2}}}(v_{\overline{j'}}^{\overline{l_{2}-j'}}).
\end{array}\]
\end{proof}

The next lemma can express the module $M(i,\overline{j})$ as the
tensor product of  $M(i,\overline{0})$ and $M(1,\overline{1})$'s,
which will be used to simplify calculations in the following
discussion. We call this result the \emph{elementary lemma}.

\begin{Lem}\label{lem3.3}{\rm (}{\bf Elementary Lemma}{\rm )}~
{\rm $M(i,\overline{j})\otimes M(1,\overline{j'})\cong
M(1,\overline{j'})\otimes M(i,\overline{j})\cong
M(i,\overline{j+j'})$ for all $1\leq i\leq d$.}
\end{Lem}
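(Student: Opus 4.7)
The plan is to produce an explicit isomorphism on the level of $K$-bases and then verify it is $(KZ_n/J^d)$-linear using Lemmas \ref{lem3.1} and \ref{lem3.2}. The key observation is that $M(1,\overline{j'})=S_{\overline{j'}}$ is one-dimensional with basis $v_{\overline{j'}}^{\overline{0}}=v_{\overline{j'}}$, and crucially $\alpha_{\overline{j'}}\cdot v_{\overline{j'}}=0$ in $M(1,\overline{j'})$; this "killing" of one summand in Lemma \ref{lem3.2} is what turns the tensor product into a shifted copy of $M(i,\overline{j})$.

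First I would compute the dimension vector of $M(i,\overline{j})\otimes M(1,\overline{j'})$ via Lemma \ref{lem3.1}. Since $({\bf dim}\,M(1,\overline{j'}))_{\overline{l_2}}\neq 0$ forces $\overline{l_2}=\overline{j'}$, each component is $({\bf dim}(M(i,\overline{j})\otimes M(1,\overline{j'})))_{\overline{l}} = ({\bf dim}\,M(i,\overline{j}))_{\overline{l-j'}}$, which is exactly the dimension vector of $M(i,\overline{j+j'})$. In particular both sides have $K$-dimension $i$ and the same support $\{\overline{j+j'},\overline{j+j'+1},\dots,\overline{j+j'+i-1}\}$ with multiplicity one.

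Next I would define the candidate isomorphism $\phi:M(i,\overline{j+j'})\to M(i,\overline{j})\otimes M(1,\overline{j'})$ by
\[
\phi\bigl(v_{\overline{j+j'}}^{\overline{k}}\bigr)\;=\;v_{\overline{j}}^{\overline{k}}\otimes v_{\overline{j'}}^{\overline{0}},\qquad 0\le k\le i-1.
\]
This is manifestly a $K$-linear bijection between bases, and it respects every idempotent $v_{\overline{l}}$ because the dimension vectors match vertex by vertex. To check compatibility with arrows, apply Lemma \ref{lem3.2} with $\overline{l_1}=\overline{j+k}$ and $\overline{l_2}=\overline{j'}$:
\[
\alpha_{\overline{j+j'+k}}\bigl(v_{\overline{j}}^{\overline{k}}\otimes v_{\overline{j'}}^{\overline{0}}\bigr)
\;=\;\alpha_{\overline{j+k}}\bigl(v_{\overline{j}}^{\overline{k}}\bigr)\otimes v_{\overline{j'}}^{\overline{0}}
\;+\;v_{\overline{j}}^{\overline{k}}\otimes \alpha_{\overline{j'}}\bigl(v_{\overline{j'}}^{\overline{0}}\bigr).
\]
The second summand vanishes because $\alpha_{\overline{j'}}$ acts as $0$ on the simple module $M(1,\overline{j'})$, while the first summand equals $v_{\overline{j}}^{\overline{k+1}}\otimes v_{\overline{j'}}^{\overline{0}}$ if $k+1\le i-1$ and is $0$ if $k=i-1$. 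This matches $\phi\bigl(\alpha_{\overline{j+j'+k}}v_{\overline{j+j'}}^{\overline{k}}\bigr)$ exactly, so $\phi$ is a $(KZ_n/J^d)$-module isomorphism.

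Finally, the isomorphism $M(1,\overline{j'})\otimes M(i,\overline{j})\cong M(i,\overline{j+j'})$ is obtained by the symmetric argument; alternatively, it follows from cocommutativity of the Hopf structure (Proposition \ref{prop3.2}(ii)), which forces $A\otimes B\cong B\otimes A$ as $H$-modules. There is no serious obstacle here — everything reduces to the "only one differential survives" phenomenon from Lemma \ref{lem3.2} caused by the simplicity of $M(1,\overline{j'})$; the only mild bookkeeping is that arrows out of the top of $M(i,\overline{j+j'})$ and $M(i,\overline{j})\otimes M(1,\overline{j'})$ must both land in zero, which we just verified.
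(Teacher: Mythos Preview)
Your proof is correct and follows essentially the same approach as the paper: compute the dimension vector via Lemma~\ref{lem3.1}, identify the obvious basis $v_{\overline{j}}^{\overline{k}}\otimes v_{\overline{j'}}^{\overline{0}}$, and use Lemma~\ref{lem3.2} together with $\alpha_{\overline{j'}}\cdot v_{\overline{j'}}^{\overline{0}}=0$ to check the arrow action. The only cosmetic difference is that you name the isomorphism $\phi$ explicitly and mention cocommutativity as an alternative for the second isomorphism, whereas the paper simply says ``the other one is similar.''
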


\begin{proof}
By Lemma \ref{lem3.1}, we have $({\bf dim} M(i,\overline{j})\otimes
M(1,\overline{j'}))_{\overline{l}}=\left\{\begin{array}{ll}
1, &\mbox{$j+j'\leq l\leq j+j'+i-1$} \\
0, &\mbox{otherwise}
\end{array}\right.$
and for any $j+j'\leq l\leq j+j'+i-1$, the basis of
$(M(i,\overline{j})\otimes M(1,\overline{j'}))_{\overline{l}}$ is
$v_{\overline{j}}^{\overline{l-j-j'}}\otimes
v_{\overline{j'}}^{\overline{0}}$.

Now it suffices to check the morphism coinciding with the morphism
of $M(i,\overline{j+j'})$. Actually, by Lemma \ref{lem3.2}, for
$j+j'\leq l\leq j+j'+i-1$, choose $l=l_{1}+l_{2}=(l-j')+j'$, then we
have

\[\begin{array}{ccl}
\alpha_{\overline{l}}\cdot(v_{\overline{j}}^{\overline{l-j'-j}}\otimes
v_{\overline{j'}}^{\overline{0}}) & = &
\alpha_{\overline{l-j'}}(v_{\overline{j}}^{\overline{l-j'-j}})\otimes
v_{\overline{j'}}^{\overline{0}}+v_{\overline{j}}^{\overline{l-j'-j}}\otimes \alpha_{\overline{j'}}(v_{\overline{j'}}^{\overline{0}})  \\
& = & \alpha_{\overline{l-j'}}(v_{\overline{j}}^{\overline{l-j'-j}})\otimes v_{\overline{j'}}^{\overline{0}}  \\
& = & \left\{\begin{array}{ll}
v_{\overline{j}}^{\overline{l-j'+1-j}}\otimes v_{\overline{j'}}^{\overline{0}}, &\mbox{$j+j'\leq l\leq j+j'+i-2$} \\
0, &\mbox{otherwise}
\end{array}\right.
\end{array}\]
therefore $M(i,\overline{j})\otimes M(1,\overline{j'})\cong
M(i,\overline{j+j'})$. The other one is similar.
\end{proof}

By the Elementary Lemma we can restrict our study to the form
$M(i,\overline{0}), 1\leq i\leq d$ and $M(1,\overline{1})$. Indeed,
for any $1\leq i,i'\leq d, 0\leq j,j'\leq n-1$, we have
\begin{equation}\label{fromelementary}
M(i,\overline{j})\otimes M(i',\overline{j'})\cong
M(i,\overline{0})\otimes M(i',\overline{0})\otimes
M(1,\overline{1})^{\otimes (j+j')},
\end{equation}
where $M(1,\overline{1})^{\otimes (j+j')}$ means $\underbrace
{M(1,\overline{1})\otimes M(1,\overline{1})\otimes\cdots
M(1,\overline{1})}_{(j+j')-\text{times}}$ for simplify; in the sequel,
the meaning of $M(i,\overline{j})^{\otimes k}$, in general, is the
same.

Additionally, if $(M(i,\overline{0})\otimes
M(i',\overline{0}))_{\overline{l}}\neq 0$, then the basis of
$M(i,\overline{0})\otimes M(i',\overline{0})$ is
$$\{v_{\overline{0}}^{\overline{k}}\otimes v_{\overline{0}}^{\overline{k'}}|k+k'=l,0\leq k \leq i-1,0\leq k'\leq
i'-1\}.$$ For simplify, from now on, we abbreviate
$v_{\overline{0}}^{\overline{k}}\otimes
v_{\overline{0}}^{\overline{k'}}$ as $v^{\overline{k}}\otimes
v^{\overline{k'}}$.

\section{{ Indecomposable modules associated with the Pascal triangle }}

In this section, we give a combinatorial way to construct the
indecomposable submodules $M$ of $M(i,\overline{0})\otimes
M(i',\overline{0})$ via the Pascal triangle.

{\bf Step $1$.} First we introduce some notations. We label the row of Pascal triangle as $0$-th
row, $1$-th row, $\cdots$ from the top to the bottom, label the symmetry axis of Pascal
triangle
as $0$-th column; $1$-th column, $2$-th column from this axis to the right; $(-1)$-th
column,
$(-2)$-th column from this axis to the left, and call the point in the $i$- row and $j$-th
column
as $(i,j)$ position, see Figure $3$.

\begin{figure}[h] \centering
  \includegraphics*[131,579][445,812]{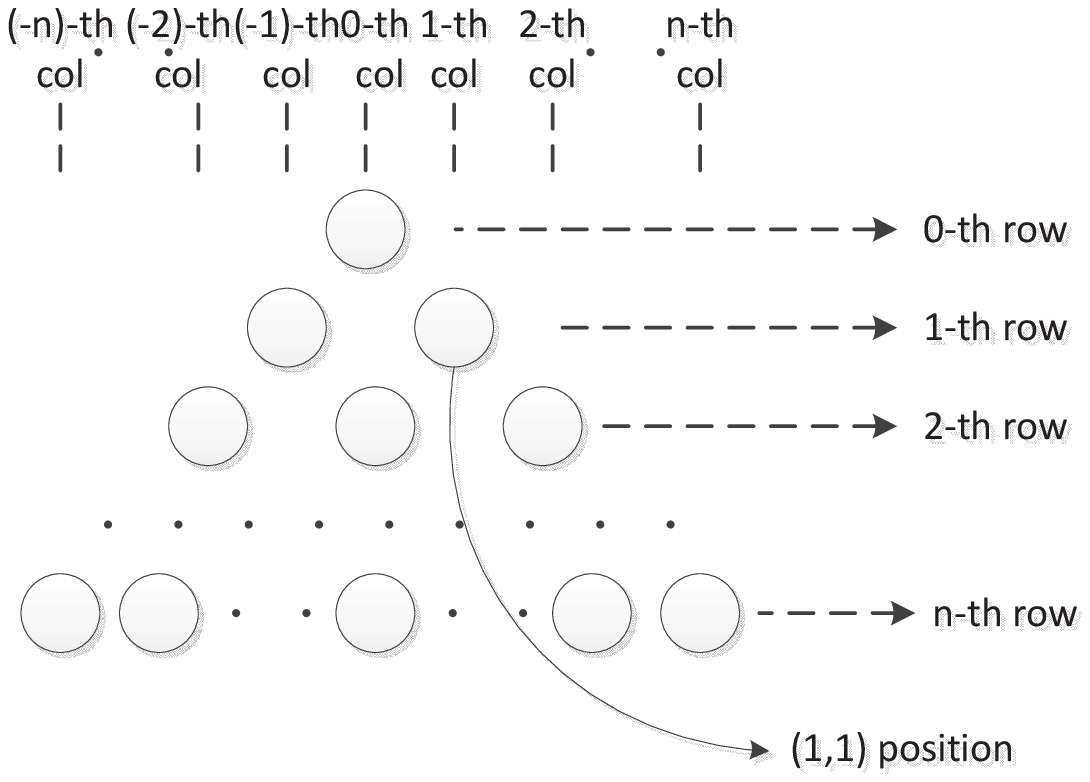}

 {\rm Figure 3 }
\end{figure}

{\bf Step $2$.} Next for the initial data $(i,i',l,\{u_{j}\}_{0\leq j\leq l})$, we will complete
the Pascal triangle by filling one number in each position. More precisely, for $0\leq
i,i',l\leq d-1$ and a sequence of integer coefficients
$U=\{u_{j}\}_{0\leq j\leq l}$, we
fill the Pascal triangle in the following way:

(i)~ For the entries in the $(i,j)$ position with $0\leq i<l$, let $\diamondsuit (i,j)=0$,
where
$\diamondsuit(i,j)$ denotes the number filling in the $(i,j)$ position.

(ii)~ For the entries in the $(l,j)$ position, let $\diamondsuit
(l,-l)=u_{0},\diamondsuit(l,-l+2)=u_{1},\cdots,\diamondsuit(l,l)=u_{l}$.

(iii)~ For the entries in the $(i,j)$ position with $i>l$, finish the Pascal triangle via
the
Pascal triangle rules, that is, if $i+j=0$, then let
$\diamondsuit(i,j)=\diamondsuit(i-1,j+1)$; If
$i=j$, then let $\diamondsuit(i,j)=\diamondsuit(i-1,j-1)$; Otherwise let
$\diamondsuit(i,j)=\diamondsuit(i-1,j-1)+\diamondsuit(i-1,j+1)$.

{\bf Step $3$.} At last we associate each vertex
$v_{\overline{l+k}}$ a vector space $V_{\overline{l+k}}$ in the
following way. It has basis
$\diamondsuit(l+k,-l-k)~v^{\overline{l+k}}\otimes
v^{\overline{0}}+\cdots+\diamondsuit(l+k,l+k)~v^{\overline{0}}\otimes
v^{\overline{l+k}}$. Here are some remarks.

(i)~ We have $k\geq 0$ and $v_{\overline{n+s}}=v_{\overline{s}},
s\geq 0$.

(ii)~ If $v^{\overline{s}}$ is in the left side of the
$\otimes$ symbol and $s\geq i$, then $v^{\overline{s}}=0$.

(iii)~ If $v^{\overline{t}}$ is in the right side of the
$\otimes$ symbol and $t\geq i'$, then $v^{\overline{t}}=0$.

Now we associate a $KZ_{n}/J^{d}$-module with a given data $(i,i',l,\{u_{j}\}_{0\leq j\leq l})$ as follows.

\begin{Def}\label{def5.1}
{\rm Given an initial data $(i,i',l,\{u_{j}\}_{0\leq j\leq l})$, define a $KZ_{n}/J^{d}$-module $M$ as follows:

(i) $M=\bigoplus\limits_{k\geq 0} V_{\overline{l+k}}$ as a vector
space;

(ii) For each vertex $v_{\overline{k}}$ and its associative vector
space $V_{\overline{k}}$, the module action in $M$ is defined as
$$\alpha_{\overline{k}}\cdot (v^{\overline{l_{1}}}\otimes
v^{\overline{k-l_{1}}})=\alpha_{\overline{l_{1}}}(v^{\overline{l_{1}}})\otimes
v^{\overline{k-l_{1}}}+v^{\overline{l_{1}}}\otimes
\alpha_{\overline{k-l_{1}}}(v^{\overline{k-l_{1}}}),$$ which is just
the simplified form of the result of Lemma \ref{lem3.2}.

From this definition, we denote this module by $M=M(i,i',l,\{u_{j}\}_{0\leq j\leq l})$.}
\end{Def}

The following proposition shows that $M$ is indeed an indecomposable
submodule of $M(i,\overline{0})\otimes M(i',\overline{0})$.

\begin{Prop}\label{prop5.2}
{\rm For any $0\leq i,i',l\leq d-1$ and a sequence of integer
coefficients $U=\{u_{j}\}_{0\leq j\leq l}$. If $l\leq {\rm min}\{i,i'\}$, then the module
$M=M(i,i',l,\{u_{j}\}_{0\leq j\leq l})$ constructed above is an
indecomposable submodule of $M(i,\overline{0})\otimes
M(i',\overline{0})$. }
\end{Prop}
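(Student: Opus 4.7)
The plan is to verify two things about the candidate module $M$: first, that each basis vector $w_k$ of $V_{\overline{l+k}}$ actually lies in the weight-$\overline{l+k}$ piece of $M(i,\overline{0})\otimes M(i',\overline{0})$; and second, that the arrow action $\alpha_{\overline{l+k}}\cdot w_k$ yields $w_{k+1}$. Granting these, $M$ is a submodule, and its indecomposability will then follow from its uniserial shape.

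For the first point, I would rewrite the basis vector of $V_{\overline{l+k}}$ in the compact form $w_k=\sum_{s}\diamondsuit(l+k,\,l+k-2s)\,v^{\overline{s}}\otimes v^{\overline{l+k-s}}$, with the convention that $v^{\overline{s}}=0$ when $s\geq i$ and $v^{\overline{l+k-s}}=0$ when $l+k-s\geq i'$. The hypothesis $l\leq\min\{i,i'\}$ is used here to guarantee that at the starting row $k=0$ the listed summands $v^{\overline{l-j}}\otimes v^{\overline{j}}$ for $0\leq j\leq l$ all genuinely fit inside the tensor product, so that $w_0$ is a nonzero element of the correct weight whenever the coefficient sequence $U$ is nontrivial. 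By Lemma~\ref{lem3.1}, each such $w_k$ lies in the correct weight component.

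The heart of the argument is the computation of $\alpha_{\overline{l+k}}\cdot w_k$ via Lemma~\ref{lem3.2}, which acts by a Leibniz-type rule splitting $v^{\overline{s}}\otimes v^{\overline{l+k-s}}$ into a sum of two tensors. After reindexing and regrouping, the coefficient of $v^{\overline{s'}}\otimes v^{\overline{l+k+1-s'}}$ becomes a sum of two neighbouring entries in row $l+k$ of the filled Pascal triangle, which by the Pascal recursion collapses to $\diamondsuit(l+k+1,\,l+k+1-2s')$. At the two extremes $s'=0$ and $s'=l+k+1$, only one of the two neighbouring $\diamondsuit$-entries appears in the reindexed sum; these are precisely the boundary positions for which Step~2(iii) installed the one-sided rules $\diamondsuit(i,\pm i)=\diamondsuit(i-1,\pm(i-1))$. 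Matching this boundary behaviour correctly is the main obstacle, and it is essentially a bookkeeping exercise; once settled, one obtains $\alpha_{\overline{l+k}}\cdot w_k=w_{k+1}$ uniformly in $k$.

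Hence $M$ is closed under the algebra action and is a submodule of $M(i,\overline{0})\otimes M(i',\overline{0})$. For indecomposability, I would simply observe that $\dim_K V_{\overline{l+k}}\leq 1$ for every $k$, that $M$ is cyclic generated by $w_0$, and that the single chain $w_0\mapsto w_1\mapsto\cdots$ terminates after at most $d$ steps (forced by $J^d=0$, and possibly earlier when either a Pascal entry becomes zero or one of the tensor factors runs out of basis vectors). Thus $M$ is uniserial, isomorphic to $M(s,\overline{l})$ for some $1\leq s\leq d$, and therefore indecomposable.
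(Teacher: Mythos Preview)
Your proof is correct and follows the same strategy as the paper: both show that $M$ is the cyclic submodule generated by $w_0$, and both use the one-dimensionality of the weight spaces $V_{\overline{l+k}}$ to force indecomposability. You supply more explicit detail on why the arrow action matches the Pascal recursion (which the paper leaves implicit in the construction of Definition~\ref{def5.1}) and phrase indecomposability via uniseriality and the identification $M\cong M(s,\overline{l})$ rather than the paper's direct-sum argument, but these are presentational differences rather than a genuinely different route.
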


\begin{proof}
 Firstly, since $l\leq {\rm min}\{i,i'\}$, thus $V_{\overline{l}}$ is a subspace of $(M(i,\overline{0})\otimes
M(i',\overline{0}))_{\overline{l}}$. Moreover, $M$ inherits the
module structure of $M(i,\overline{0})\otimes M(i',\overline{0})$ and $M$ is generated
by $w_0=u_{0}v^{\overline{l}}\otimes
v^{\overline{0}}+u_{1}v^{\overline{l-1}}\otimes
v^{\overline{1}}+\cdots+u_{l}v^{\overline{0}}\otimes
v^{\overline{l}}$,
hence $M$ is a submodule of $M(i,\overline{0})\otimes
M(i',\overline{0})$.

Moreover, the indecomposability of $M$ can be obtained as a consequence of the
actions of the morphisms
$\alpha_{\overline{l}},\alpha_{\overline{l+1}},\cdots$ on the vector
spaces $V_{\overline{l}},V_{\overline{l+1}},\cdots$, respectively,
according to the construction of $M=\bigoplus\limits_{k\geq 0}
V_{\overline{l+k}}$. In fact, by definition of $M$, it is generated
by the element $w_0$, which is a basis of $V_{\overline{l}}$. If $M$ is decomposable, then let $M=M'\oplus M''$ as module, then $w_0\in M'$ or $w_0\in M''$.
Assume $w_0\in M'$. Thus $M'\cap V_{\overline{l}}\neq \{0\}$. But
dim$V_{\overline{l}}=1$, we get $V_{\overline{l}}\subseteq (M')_{\overline{l}}$. So, $M=\bigoplus\limits_{k\geq 0} V_{\overline{l+k}}$ is a
submodule of $M'$ since $M$ is generated by $w_{0}$. It means that $M=M'$
and $M''=0$, which implies that $M$ is indecomposable.
\end{proof}

By Proposition \ref{prop5.2}, the coefficient of
$v^{\overline{i}}\otimes v^{\overline{j}}$ is stored in the
$(i+j,j-i)$ position. Therefore from now on, we write this
phenomenon as $c(i,j)=\diamondsuit(i+j,j-i)$ for short.

We end this section with the following example for constructing module $M=M(i,i',l,\{u_{j}\}_{0\leq j\leq l})$.

\begin{Ex}\label{ex5.3}
{\rm Let $i=i'=2, l=0, u_{0}=1, n=6, d=p=5$, then
$M(2,2,0,\{1\})=\oplus_{k=0}^5 V_{\overline{k}}$ is an
indecomposable submodule of $M(2,\overline{0})\otimes M(2,\overline{0})$ by Proposition
\ref{prop5.2}.
 Through the Pascal triangle we have the following processes.

(i)~ $V_{\overline{0}}={\rm span}\{v^{\overline{0}}\otimes
v^{\overline{0}}\}\cong K$;

 (ii)~ $V_{\overline{1}}={\rm span}\{v^{\overline{1}}\otimes v^{\overline{0}}+v^{\overline{0}}\otimes v^{\overline{1}}\}\cong K$, since $\alpha_{\overline{0}}\cdot (v^{\overline{0}}\otimes v^{\overline{0}})=v^{\overline{1}}\otimes v^{\overline{0}}+v^{\overline{0}}\otimes v^{\overline{1}}$;

 (iii)~ $V_{\overline{2}}={\rm span}\{2 v^{\overline{1}}\otimes v^{\overline{1}}\}\cong K$, since $\alpha_{\overline{1}}\cdot
 (v^{\overline{1}}\otimes v^{\overline{0}}+v^{\overline{0}}\otimes v^{\overline{1}})=v^{\overline{2}}\otimes v^{\overline{0}}+2 v^{\overline{1}}\otimes v^{\overline{1}}+v^{\overline{0}}\otimes v^{\overline{2}}=2 v^{\overline{1}}\otimes v^{\overline{1}}$ with $v^{\overline{2}}=\alpha_{\overline{1}}\alpha_{\overline{0}}=0$ in
 $M(2,\overline{0})$;

 (iv)~ $V_{\overline{3}}=V_{\overline{4}}=V_{\overline{5}}=0$, since $\alpha_{\overline{2}}\cdot (2 v^{\overline{1}}\otimes v^{\overline{1}})=2 v^{\overline{2}}\otimes v^{\overline{1}}+2 v^{\overline{1}}\otimes v^{\overline{2}}=0$.  \\
Thus, $M(2,2,0,\{1\})={\rm span}\{v^{\overline{0}}\otimes
v^{\overline{0}}, v^{\overline{1}}\otimes
v^{\overline{0}}+v^{\overline{0}}\otimes v^{\overline{1}}, 2
v^{\overline{1}}\otimes v^{\overline{1}}\}$.

In Figure 4, the right figure expresses $M(2,2,0,\{1\})$, in which
the coefficients occurring in the basis of $M(2,2,0,\{1\})$ are
surrounded by the dotted line; the left figure expresses
$M(3,\overline{0})$. The right figure can be induced from the left
figure under the action of arrows; conversely, the left figure is
directly obtained from the right figure. Hence, from Figure 4, we
have shown that $M(2,2,0,\{1\})\cong M(3,\overline{0})$, which is an indecomposable submodule of $M(2,\overline{0})\otimes M(2,\overline{0})$.  }

\begin{figure}[h] \centering
  \includegraphics*[126,341][465,501]{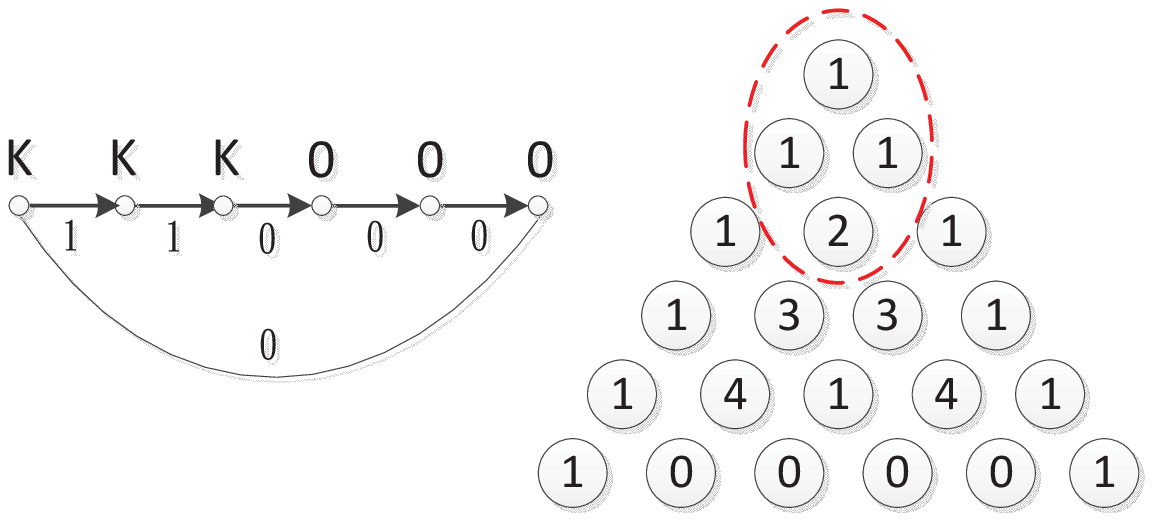}

 {\rm Figure 4 }
\end{figure}
\end{Ex}

\bigskip
\section{{ The generators of the representation ring }}

This section is devoted to calculating the generators of representation ring $r(KZ_{n}/J^{d})$.

We first give the following Lemma which is similar to the Lemma 3.8 in \cite{[7]}, here we
give another verification by the notation introduced in Section $5$.

\begin{Lem}\label{lem6.1}
{\rm (i)~ For all $1\leq i\leq d, 0\leq j\leq n-1,$~~
$$M(i,\overline{j})\otimes M(1,\overline{0})\cong M(1,\overline{0})\otimes M(i,\overline{j}) \cong M(i,\overline{j}),
\;\;\;\; M(1,\overline{1})^{\otimes n}\cong M(1,\overline{0}).$$

(ii)~ $M(2,\overline{0})\otimes M(t,\overline{0})\cong
M(t+1,\overline{0})\oplus M(t-1,\overline{1})$\;\;\; for all $t\geq
2, p \nmid t$.

(iii)~ $M(2,\overline{0})\otimes M(t,\overline{0})\cong
M(t,\overline{0})\oplus M(t,\overline{1})$\;\;\; for all $t>0, p|t$.
\\ }
\end{Lem}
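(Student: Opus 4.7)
The plan is to reduce everything to the explicit Pascal-triangle construction of Section 5, combined with the dimension formula of Lemma \ref{lem3.1}, and then apply Krull--Schmidt. For part (i), I would invoke the Elementary Lemma (Lemma \ref{lem3.3}) directly: the first isomorphism is its case $i'=1$, $j'=0$, and $M(1,\overline{1})^{\otimes n}\cong M(1,\overline{n})=M(1,\overline{0})$ follows by $n$-fold iteration, since $\overline{n}=\overline{0}$ in $G=\mathbb Z/n\mathbb Z$.

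For parts (ii) and (iii), I would first record the dimension vector of $M(2,\overline{0})\otimes M(t,\overline{0})$ via Lemma \ref{lem3.1}: it is $1$ at vertices $\overline{0}$ and $\overline{t}$, and $2$ at each of $\overline{1},\ldots,\overline{t-1}$, for a total of $2t$. Both candidate decompositions also have total dimension $2t$, so by Krull--Schmidt it suffices to exhibit two indecomposable submodules of the stated isomorphism types whose sum is direct. The principal summand I would build as the submodule $M(2,t,0,\{1\})$ of Definition \ref{def5.1}, generated by $v^{\overline{0}}\otimes v^{\overline{0}}$. Propagating up the Pascal triangle via Lemma \ref{lem3.2}, the level-$k$ generator works out to $k\,v^{\overline{1}}\otimes v^{\overline{k-1}}+v^{\overline{0}}\otimes v^{\overline{k}}$ for $1\le k\le t-1$, and at level $t$ it collapses to $t\,v^{\overline{1}}\otimes v^{\overline{t-1}}$ because $v^{\overline{t}}=0$ in $M(t,\overline{0})$. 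This is where the characteristic of $K$ enters: if $p\nmid t$ the top generator survives and is annihilated by $\alpha_{\overline{t}}$, producing via Proposition \ref{prop5.2} an indecomposable copy of $M(t+1,\overline{0})$; if $p\mid t$ it vanishes, and the same construction instead yields $M(t,\overline{0})$.

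The complementary summand should then be a second Pascal-triangle submodule of the form $M(2,t,1,\{u_0,u_1\})$, with $(u_0,u_1)$ tuned so that it has the correct length and is linearly independent from the principal summand at every intermediate vertex. In case (ii) I would take $(u_0,u_1)=(1-t,1)$: the level-$l$ element is $(l-t)\,v^{\overline{1}}\otimes v^{\overline{l-1}}+v^{\overline{0}}\otimes v^{\overline{l}}$, which vanishes at $l=t$, giving $M(t-1,\overline{1})$, and the $2\times 2$ coefficient matrix formed by the two summand-generators at each intermediate vertex has determinant $\pm t\neq 0$. In case (iii) the choice $(u_0,u_1)=(1,0)$ yields level-$l$ element $v^{\overline{1}}\otimes v^{\overline{l-1}}$ surviving up to $l=t$, hence a copy of $M(t,\overline{1})$, and the transversality check is immediate.

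The hard part I expect is pinpointing the coefficients $(u_0,u_1)$ so that the complementary summand has exactly the right length and is transverse to the principal one; both requirements ultimately reduce to a single scalar (of the shape $a_1+(t-1)b_1$ arising from the top-level Pascal evaluation) being zero or nonzero modulo $p$, with the correct choice differing between the two cases. Once that scalar is diagnosed, Proposition \ref{prop5.2} delivers indecomposability of each summand and the direct-sum decomposition follows from the dimension count.
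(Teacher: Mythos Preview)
Your proposal is correct and follows essentially the same route as the paper's proof: part (i) via the Elementary Lemma, and parts (ii)--(iii) by exhibiting the two Pascal-triangle submodules $M(2,t,0,\{1\})$ and $M(2,t,1,\{1-t,1\})$ (respectively $M(2,t,1,\{1,0\})$), identifying them with the claimed indecomposables, and checking that they intersect trivially at each vertex---the paper records exactly the same coefficient pairs $(l,1)$ and $(l-t,1)$ that you compute, and your determinant-$t$ transversality check is just a rephrasing of the paper's $(M_1)_{\overline l}\cap(M_2)_{\overline l}=\{0\}$.
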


\begin{proof}
(i)~ Follows directly from Lemma \ref{lem3.3}.

(ii)~ For $t\geq 2, p\nmid t$, by Proposition \ref{prop5.2} we have
two submodules $M_{1}=M(2,t,0,\{1\})$ and $M_{2}=M(2,t,1,\{1-t,1\})$
of $M(2,\overline{0})\otimes M(t,\overline{0})$. The coefficients of
the bases of $(M_1)_{\overline{l}}, (M_2)_{\overline{l}}$, $0\leq l\leq n-1$ can be seen in the following table.

\setlength{\tabcolsep}{7pt}
\renewcommand{\arraystretch}{1.3}
\begin{center}
\begin{tabular}{|c|c|c|c|c|c|c|c|c|c|c|}
\hline
\multicolumn{1}{|c|}{Rows in Pascal triangle} & \multicolumn{1}{|c|}{$0$} & \multicolumn{1}{|c|}{$1$} &
\multicolumn{1}{|c|}{$\cdots$} & \multicolumn{1}{|c|}{$l$} & \multicolumn{1}{|c|}{$\cdots$}
&
\multicolumn{1}{|c|}{$t-1$} & \multicolumn{1}{|c|}{$t$} & \multicolumn{1}{|c|}{$t+1$} &
\multicolumn{1}{|c|}{$\cdots$} & \multicolumn{1}{|c|}{$n-1$}    \\
\hline
$M_{1}$ & $(0,1)$ & $(1,1)$ & $\cdots$ & $(l,1)$ & $\cdots$ & $(t-1,1)$ & $(t,0)$ & $(0,0)$
&
$\cdots$ & $(0,0)$ \\
\hline
$M_{2}$ & $(0,0)$ & $(1-t,1)$ & $\cdots$ & $(l-t,1)$ & $\cdots$ & $(-1,1)$ & $(0,0)$ &
$(0,0)$ &
$\cdots$ & $(0,0)$  \\
\hline
\end{tabular}
\end{center}
\bigskip

where the coefficients pair $(c_{1},c_{2})$ in the $s$-position means that
$c_{1}=c(1,s-1),~ c_{2}=c(0,s)$, which are stored in the upper-right two diagonals of the Pascal triangle.

Thus, for $p\nmid t$, we have $M_{1}\cong M(t+1,\overline{0})$,
$M_{2}\cong M(t-1,\overline{1})$, ${\rm
dim}(M_{1})_{\overline{l}}+{\rm dim}(M_{2})_{\overline{l}}={\rm dim}
(M(2,\overline{0})\otimes M(t,\overline{0}))_{\overline{l}}$ and
$(M_{1})_{\overline{l}}\cap (M_{2})_{\overline{l}}=\{0\}$ for any
$0\leq l\leq n-1$. Therefore,
$$M(2,\overline{0})\otimes M(t,\overline{0})\cong M_{1} \oplus M_{2} \cong M(t+1,\overline{0})\oplus
M(t-1,\overline{1}).$$

(iii) Similarly with (ii), we just remark that
$M(t,\overline{0})\cong M(2,t,0,\{1\})$, $M(t,\overline{1})\cong
M(2,t,1,\{1,0\})$. The basis of $M(t,\overline{0})_{\overline{l}}$
is $v^{\overline{0}}\otimes
v^{\overline{l}}+l~v^{\overline{1}}\otimes v^{\overline{l-1}}$ and
the basis of $M(t,\overline{1})_{\overline{l}}$ is
$v^{\overline{1}}\otimes v^{\overline{l-1}}$ for $1\leq l\leq t$.
\end{proof}

Actually, the coefficients of the basis can be read from the Pascal triangles directly, see
Figure $5$.

\begin{figure}[ht] \centering
  \includegraphics*[23,245][494,522]{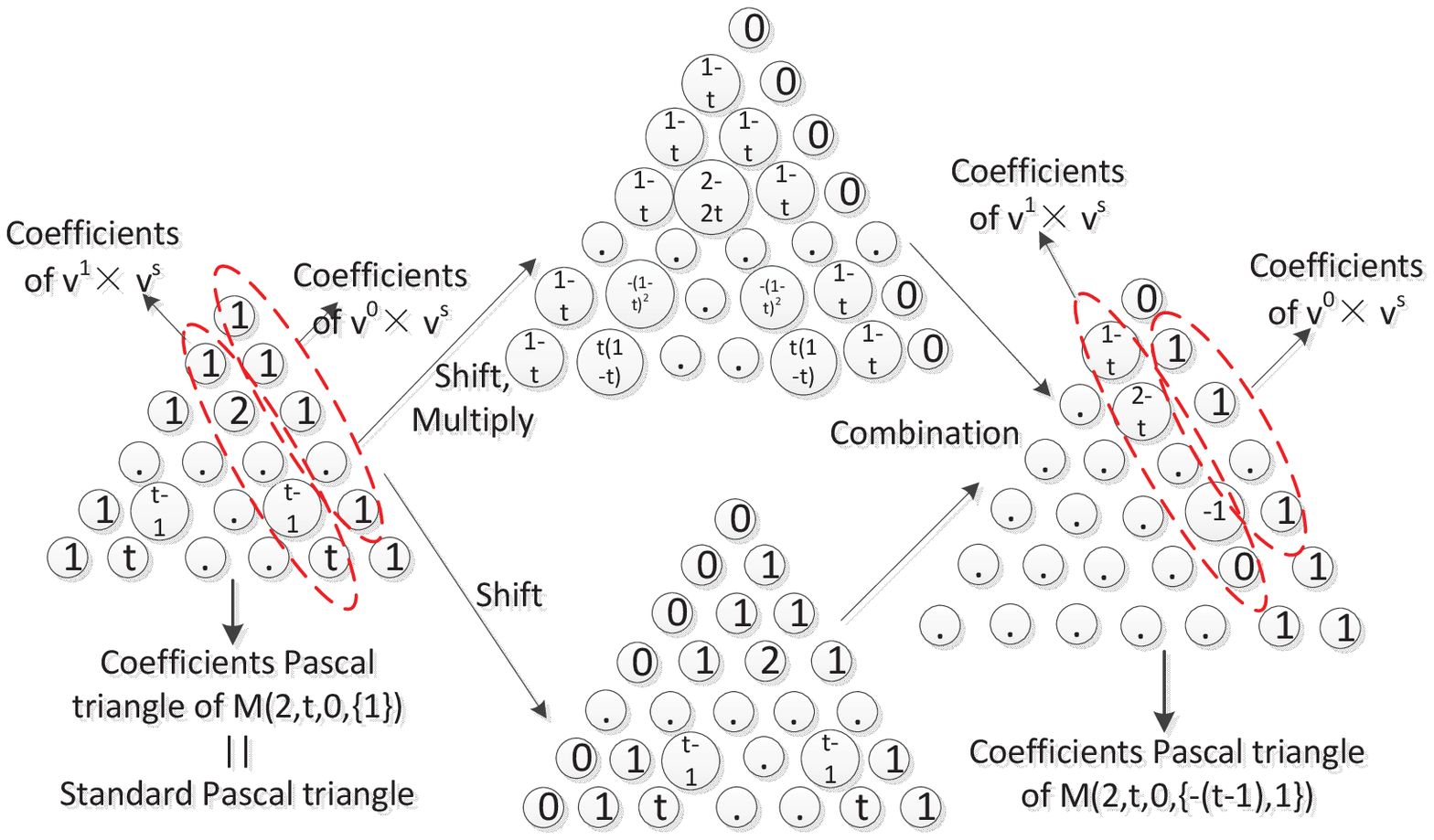}

 {\rm Figure 5 }
\end{figure}

In order to determine the generators of the Green ring $r(KZ_{n}/J^{d})$, we need more
observations.

\begin{Lem}\label{lem6.2}
{\rm For any $1\leq u\leq d$, the isomorphism class
$[M(u,\overline{0})]$ of the indecomposable module
$M(u,\overline{0})$ can be expressed as a polynomial of
$[M(1,\overline{1})], [M(2,\overline{0})],
[M(p^{l}+1,\overline{0})], \forall 1\leq l\leq m-1$ in
$r(KZ_{n}/J^{d})$.}
\end{Lem}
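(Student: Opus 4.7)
The plan is to prove the lemma by strong induction on $u$. Three base cases are immediate: $u = 1$ gives $M(1,\overline{0}) \cong K$, the identity of $r(KZ_n/J^d)$; $u = 2$ is a listed generator; and $u = p^l + 1$ for $1 \leq l \leq m-1$ are listed generators. So I may assume $u \geq 3$ and $u \notin \{p^l + 1 : 1 \leq l \leq m-1\}$, splitting the argument on whether $p$ divides $u-1$.

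In the easy case $p \nmid (u-1)$, I apply Lemma \ref{lem6.1}(ii) with $t = u - 1$, which yields $M(2,\overline{0}) \otimes M(u-1,\overline{0}) \cong M(u,\overline{0}) \oplus M(u-2,\overline{1})$. Using the Elementary Lemma (Lemma \ref{lem3.3}) to rewrite $M(u-2,\overline{1}) \cong M(u-2,\overline{0}) \otimes M(1,\overline{1})$, I obtain
\[
[M(u,\overline{0})] = [M(2,\overline{0})]\,[M(u-1,\overline{0})] - [M(1,\overline{1})]\,[M(u-2,\overline{0})],
\]
and the inductive hypothesis expresses the right side as a polynomial in the listed generators.

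The hard case is when $p \mid (u-1)$ and $u \neq p^l + 1$. Lemma \ref{lem6.1}(iii) applied to $t = u-1$ only produces the side-relation $[M(2,\overline{0})]\,[M(u-1,\overline{0})] = [M(u-1,\overline{0})]\,(1 + [M(1,\overline{1})])$, which does not isolate $[M(u,\overline{0})]$. To handle this case I would write $u - 1 = p^a b$ with $p \nmid b$, $b \geq 2$, and $1 \leq a \leq m-1$, so that $[M(p^a+1,\overline{0})]$ is among the listed generators, and then decompose the tensor product $M(p^a + 1,\overline{0}) \otimes M(u - p^a,\overline{0})$ using the Pascal-triangle machinery of Section 5. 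The dimension-vector bookkeeping of Lemma \ref{lem3.1} together with Kummer's theorem on the $p$-adic valuation of binomial coefficients should identify $M(u,\overline{0})$ as a direct summand, with every other summand of the form $M(i',\overline{j'})$ with $i' < u$. By the Elementary Lemma each such summand satisfies $[M(i',\overline{j'})] = [M(i',\overline{0})]\,[M(1,\overline{1})]^{j'}$, and by induction $[M(i',\overline{0})]$ is a polynomial in the generators; solving for $[M(u,\overline{0})]$ closes the induction.

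The principal obstacle is precisely this second case: justifying the explicit Pascal-triangle decomposition of $M(p^a+1,\overline{0}) \otimes M(u - p^a,\overline{0})$, showing that $M(u,\overline{0})$ occurs with a controllable (ideally unit) multiplicity, and confirming that every other indecomposable summand has first index strictly smaller than $u$. This is the step where the characteristic-$p$ Pascal-triangle calculus introduced in Section 5, and especially Proposition \ref{prop5.2} together with mod-$p$ vanishing of binomial coefficients, must carry the combinatorial weight of the proof.
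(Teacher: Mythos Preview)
Your approach is essentially that of the paper, but you have split into two cases where the paper treats them uniformly, and you overestimate the work needed in your ``hard'' case.

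The paper's argument: whenever $u-1$ is \emph{not} a power of $p$ (which subsumes both of your cases), the fact $\gcd\bigl(\binom{u-1}{1},\ldots,\binom{u-1}{u-2}\bigr)=1$ guarantees some $w$ with $1\leq w\leq u-2$ and $\binom{u-1}{w}\not\equiv 0\pmod p$. Your choices $w=1$ (when $p\nmid u-1$) and $w=p^a$ (when $p^a\parallel u-1$, via Lucas/Kummer) are simply specific instances of this.

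The point you are missing is that one does \emph{not} need the full Pascal-triangle decomposition of $M(u-w,\overline{0})\otimes M(w+1,\overline{0})$; the argument is much shorter than you anticipate:
\begin{itemize}
\item By Lemma~\ref{lem3.1}, the dimension of the tensor product at vertex $\overline{0}$ is $1$, so exactly one indecomposable summand, say $M_1$, has $(M_1)_{\overline{0}}\neq 0$, and it must contain $v^{\overline{0}}\otimes v^{\overline{0}}$.
\item The cyclic submodule generated by $v^{\overline{0}}\otimes v^{\overline{0}}$ is $M(u-w,w+1,0,\{1\})$ in the notation of Section~5; the single Pascal-triangle entry that matters is the corner $\binom{u-1}{w}\neq 0$ in $K$, which forces this submodule to be isomorphic to $M(u,\overline{0})$. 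Hence $M(u,\overline{0})\subseteq M_1$.
\item The whole tensor product is supported on vertices $\overline{0},\ldots,\overline{u-1}$, so $\dim M_1\leq u$; combined with the inclusion, $M_1\cong M(u,\overline{0})$.
\item Every other summand $M_s$ has $(M_s)_{\overline{0}}=0$, hence $M_s\cong M(i_s,\overline{j_s})$ with $j_s\geq 1$, and the same support constraint forces $i_s\leq u-1<u$.
\end{itemize}
This yields $[M(u,\overline{0})]=[M(u-w,\overline{0})][M(w+1,\overline{0})]-\sum_{s\geq 2}[M(i_s,\overline{j_s})]$ with all $i_s<u$, and induction together with the Elementary Lemma finishes. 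No explicit identification of the remaining summands, and no multiplicity issue, ever arises; the ``combinatorial weight'' you were worried about is carried entirely by the single nonvanishing binomial coefficient $\binom{u-1}{w}$.
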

\begin{proof}
In the case that $u-1$ is a power of $p$, the conclusion follows at once.

Otherwise, in the case that $u-1$ is not a power of $p$, we prove this conclusion by using of the induction on $u$.

When $u=1$, the conclusion is trivial.

When $u>1$ and $u-1$ is not a power of $p$, assuming that for any
$1\leq v<u$, $M(v,\overline{0})$ can be expressed as a polynomial of
$[M(1,\overline{1})], [M(2,\overline{0})],
[M(p^{l}+1,\overline{0})], \forall 1\leq l\leq m-1$ in
$r(KZ_{n}/J^{d})$. By the fact {\rm
gcd}$(\binom{u-1}{1},\binom{u-1}{2},\cdots,\binom{u-1}{u-2})=1$ in
\cite{[16]}, there exists $w, 1\leq w\leq u-2$ such that
$\binom{u-1}{w}\neq 0$ in $K$ with char$K=p$.

Now, we will prove the result on $M(u,\overline{0})$ by the induction
hypotheses. For this aim, consider the decomposition of
$M(u-w,\overline{0})\otimes M(w+1,\overline{0})$ into
indecomposables.

 Firstly,  since $\binom{u-1}{w}\neq 0$ in $K$, $M(u-w,w+1,0,\{1\})$ is a submodule of $M(u-w,\overline{0})\otimes M(w+1,\overline{0})$, generated by $v^{\overline{0}}\otimes
v^{\overline{0}}$ and the corresponding figure in the Pascal
triangle is the rectangle with points at $(0,0)$, $(u-w-1,0)$,
$(0,w)$, $(u-w-1,w)$ positions. Then it is clear that
$M(u,\overline{0})\cong M(u-w,w+1,0,\{1\})$.

Given the decomposition of $M(u-w,\overline{0})\otimes M(w+1,\overline{0})$ into the direct sum of indecomposables as that
$M(u-w,\overline{0})\otimes M(w+1,\overline{0})=M_{1}\oplus
M_{2}\cdots \oplus M_{t}$, since ${\rm dim}
(M(u-w,\overline{0})\otimes M(w+1,\overline{0}))_{\overline{0}}=1$,
there exists a unique $s$ with $1\leq s\leq t$ such that ${\rm dim}
(M_{s})_{\overline{0}}=1$. Without loss of generality, say $s=1$, then it follows $v^{\overline{0}}\otimes v^{\overline{0}}\in M_1$. Then
$M(u,\overline{0})$ becomes a submodule of $M_{1}$ since
$M(u,\overline{0})$ is generated by $v^{\overline{0}}\otimes
v^{\overline{0}}$.

 Because $M_{1}$ is a submodule of $M(u-w,\overline{0})\otimes M(w+1,\overline{0})$, we have ${\rm dim}
(M_{1})_{\overline{l}}=0$ for $u\leq l\leq n-1$. Thus, ${\rm dim}
M_{1}\leq (u-1)-0+1=u={\rm dim} M(u,\overline{0})$.

Therefore $M_{1}=M(u,\overline{0})$ and then we have
$M(u-w,\overline{0})\otimes
M(w+1,\overline{0})=M(u,\overline{0})\oplus M_{2}\cdots \oplus
M_{t}.$ From this, in $r(KZ_{n}/J^{d})$, we get that
\begin{equation}\label{usefulone}
[M(u,\overline{0})]=[M(u-w,\overline{0})][M(w+1,\overline{0})]-[M_{2}]-\cdots
-[M_{t}].
\end{equation}

For any $2\leq s\leq t$, there are $1\leq i_{s}, j_{s}\leq u-2$ such
that $M_{s}\cong M(i_{s},\overline{j_{s}})$ and by the Elementary
Lemma, $M(i_{s},\overline{j_{s}})=M(i_{s},\overline{0})\otimes
M(1,\overline{1})^{\otimes j_{s}}$. Thus
$[M_s]=[M(i_{s},\overline{j_{s}})]=[M(i_{s},\overline{0})][M(1,\overline{1})]^{j_{s}}$.

Hence, by the induction hypotheses, all of
$[M(u-w,\overline{0})], [M(w+1,\overline{0})], [M_{2}], \cdots,
[M_{t}]$ can be expressed as a polynomial of $[M(1,\overline{1})],
[M(2,\overline{0})], [M(p^{l}+1,\overline{0})], \forall 1\leq
l\leq m-1$ in $r(KZ_{n}/J^{d})$.
 By (\ref{usefulone}), so does $[M(u,\overline{0})]$, too.
\end{proof}

By Lemma \ref{lem3.3}, we have
$[M(u,\overline{w})]=[M(u,\overline{0})][M(1,\overline{1})]^{w}, \forall
1\leq u\leq d, 0\leq w\leq n-1$. By this fact and Lemma
\ref{lem6.2}, we obtain the following.

\begin{Thm}\label{thm6.3}
{\rm Let {\rm char}$K=p$ and $n\geq d=p^m$. Then the representation
ring $r(KZ_{n}/J^{d})$ of the Nakayama truncated algebra
$KZ_{n}/J^{d}$ is generated by
$[M(1,\overline{1})],[M(2,\overline{0})]$ and
$[M(p^{l}+1,\overline{0})],1\leq l\leq m-1$.}
\end{Thm}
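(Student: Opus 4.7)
The plan is to deduce the theorem as a direct corollary of the Elementary Lemma (Lemma \ref{lem3.3}) and Lemma \ref{lem6.2}, since the hard combinatorial and inductive work has already been done in those two statements. The representation ring $r(KZ_{n}/J^{d})$ is, by definition, the free abelian group on the isomorphism classes of indecomposable modules modulo the tensor-product relations. By the description in Section 3, every indecomposable $(KZ_{n}/J^{d})$-module is of the form $M(u,\overline{w})$ for some $1\leq u\leq d$ and $0\leq w\leq n-1$. Hence it suffices to show that every $[M(u,\overline{w})]$ lies in the subring of $r(KZ_{n}/J^{d})$ generated by the elements in the statement.

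First I would split the index $(u,\overline{w})$ via the Elementary Lemma. Applying Lemma \ref{lem3.3} inductively $w$ times gives $M(u,\overline{w})\cong M(u,\overline{0})\otimes M(1,\overline{1})^{\otimes w}$, so
\[
[M(u,\overline{w})]=[M(u,\overline{0})]\,[M(1,\overline{1})]^{w}.
\]
This reduces the problem to expressing each $[M(u,\overline{0})]$ as a polynomial in the chosen generators. But this is precisely the content of Lemma \ref{lem6.2}, which asserts that $[M(u,\overline{0})]$ lies in the subring generated by $[M(1,\overline{1})]$, $[M(2,\overline{0})]$, and $[M(p^{l}+1,\overline{0})]$ for $1\leq l\leq m-1$. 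Substituting into the displayed formula above shows that $[M(u,\overline{w})]$ is a polynomial in the claimed generators.

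Finally, I would close the argument by noting that the additive generators $[M(u,\overline{w})]$ for $1\leq u\leq d$, $0\leq w\leq n-1$ are exactly the $\mathds{Z}$-basis of $r(KZ_{n}/J^{d})$ (as pointed out in property (ii) after Definition \ref{generalgreen}), so the subring containing all of them coincides with the whole ring. I do not anticipate any real obstacle here: the combinatorial substance is packaged into Lemma \ref{lem6.2} (whose proof uses the Pascal-triangle analysis of Section 5 and the induction on $u$ based on the key relation $\gcd\!\binom{u-1}{k}=1$ forcing some $\binom{u-1}{w}\ne 0$ in characteristic $p$), and into Lemma \ref{lem3.3} (which reduces twisted shifts to tensor products with $M(1,\overline{1})$). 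The theorem itself is therefore a short bookkeeping step combining these two.
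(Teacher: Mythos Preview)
Your proposal is correct and matches the paper's own argument essentially verbatim: the paper also observes $[M(u,\overline{w})]=[M(u,\overline{0})][M(1,\overline{1})]^{w}$ from Lemma \ref{lem3.3} and then invokes Lemma \ref{lem6.2} to conclude. The theorem is indeed just a two-line bookkeeping step combining these lemmas.
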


The following corollary is more precise.

\begin{Cor}\label{lem6.4}
{\rm For any $1\leq u\leq p^{l}, 1\leq l\leq m, 0\leq r\leq n-1$, then the element $[M(u,\overline{r})]$ in
$r(KZ_{n}/J^{d})$ can be expressed
as a polynomial of $$[M(1,\overline{1})],\; [M(2,\overline{0})],\;
[M(p+1,\overline{0})],\; [M(p^{2}+1,\overline{0})],\; \cdots,\;
[M(p^{l-1}+1,\overline{0})].$$ }
\end{Cor}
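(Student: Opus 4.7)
The plan is to reduce to the $r=0$ case via the Elementary Lemma (Lemma \ref{lem3.3}), which gives $[M(u,\overline{r})]=[M(u,\overline{0})][M(1,\overline{1})]^{r}$, and then to prove the refined statement for all $1\leq u\leq p^{l}$ by strong induction on $u$ with $l$ held fixed, essentially re-running the inductive argument of Lemma \ref{lem6.2} while bookkeeping which generators can actually enter the expression.

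The base case $u=1$ is immediate, since $[M(1,\overline{0})]$ is the identity of $r(KZ_{n}/J^{d})$. For the inductive step I would split into two cases, mirroring the proof of Lemma \ref{lem6.2}. \textbf{Case 1:} if $u-1=p^{s}$ for some $s\geq 0$, then the constraint $u\leq p^{l}$ forces $p^{s}\leq p^{l}-1<p^{l}$, so $s\leq l-1$, and hence $[M(u,\overline{0})]=[M(p^{s}+1,\overline{0})]$ is already among the listed generators (with $s=0$ yielding $[M(2,\overline{0})]$). \textbf{Case 2:} if $u-1$ is not a power of $p$, then by the $\gcd$-fact invoked in the proof of Lemma \ref{lem6.2} there exists $1\leq w\leq u-2$ with $\binom{u-1}{w}\neq 0$ in $K$, and the same decomposition argument produces the identity
$$[M(u,\overline{0})]=[M(u-w,\overline{0})][M(w+1,\overline{0})]-\sum_{s=2}^{t}[M(i_{s},\overline{0})][M(1,\overline{1})]^{j_{s}},$$
with $1\leq i_{s},j_{s}\leq u-2$. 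Since $u-w$, $w+1$, and each $i_{s}$ are strictly less than $u$ and hence $\leq p^{l}$, the strong inductive hypothesis expresses each of $[M(u-w,\overline{0})]$, $[M(w+1,\overline{0})]$, and $[M(i_{s},\overline{0})]$ as a polynomial in $[M(1,\overline{1})],[M(2,\overline{0})],[M(p+1,\overline{0})],\ldots,[M(p^{l-1}+1,\overline{0})]$; combined with the Elementary Lemma to absorb the factors $[M(1,\overline{1})]^{j_{s}}$, this gives the desired polynomial expression for $[M(u,\overline{0})]$.

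There is no serious obstacle here; the corollary is really a refinement of Lemma \ref{lem6.2} at the level of bookkeeping. The only point that needs to be verified carefully, and which constitutes the substantive content beyond Lemma \ref{lem6.2}, is that no generator $[M(p^{s}+1,\overline{0})]$ with $s\geq l$ is ever forced into the expression: this is precisely guaranteed by the strict inequality $u-1<p^{l}$ in Case 1, together with the inductive descent in Case 2 keeping every index in the range $[1,u-1]\subseteq[1,p^{l}-1]$, so the induction never produces a module $M(v,\overline{0})$ with $v>p^{l-1}+1$ that is not itself further reducible.
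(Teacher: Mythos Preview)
Your proof is correct and follows essentially the same approach as the paper. The only organizational difference is that the paper uses a double induction---first on $l$, then on $u$ in the range $p^{l-1}+1\leq u\leq p^{l}$, handling the base case $l=1$ explicitly via the recursion of Lemma~\ref{lem6.1}(ii)---whereas you run a single strong induction on $u$ with $l$ fixed and treat the base step $u-1=p^{s}$ uniformly; your version is arguably cleaner, but the substance (Elementary Lemma plus the decomposition from Lemma~\ref{lem6.2}, together with the bound $s\leq l-1$ forced by $u\leq p^{l}$) is identical.
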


\begin{proof}
We may use the induction on $l$. When $l=1$, by Lemma \ref{lem3.3}, we have
$$[M(i,\overline{j})]=[M(i,\overline{0})][M(1,\overline{j})]=[M(i,\overline{0})][M(1,\overline{1})]^{j}$$ and by Lemma \ref{lem6.1}, recursively, we have

$[M(2,\overline{0})][M(2,\overline{0})]=[M(3,\overline{0})]+[M(1,\overline{1})]$,
$[M(2,\overline{0})][M(3,\overline{0})]=[M(4,\overline{0})]+[M(2,\overline{1})]$,
$\cdots$,
\\$[M(2,\overline{0})][M(p-1,\overline{0})]=[M(p,\overline{0})]+[M(p-2,\overline{1})]$.

It follows that for any $1\leq u\leq p, 0\leq r\leq n-1$,
$[M(u,\overline{r})]$ can be expressed as a polynomial of
$[M(1,\overline{1})], [M(2,\overline{0})]$.

 Assume the result is true for $l-1$. Now consider it for $l$. Let $1\leq u\leq p^{l}, 0\leq r\leq n-1$.

 If $1\leq u\leq p^{l-1}$, then the result follows by the induction hypothesis. So, we only need to consider the case
 $p^{l-1}+1\leq u\leq p^{l}$ by using induction again on $u$.

Firstly, if $u=p^{l-1}+1$, then the result is trivial since we have
$[M(u,\overline{r})]=[M(p^{l-1}+1,\overline{0})][M(1,\overline{1})]^{r}$
by Lemma \ref{lem3.3}. Now, suppose the results holds for all $u'<u$
and then consider $[M(u,\overline{0})]$.

 In this case $p^{l-1}+2\leq u\leq p^{l}$,  $u-1$ is not a power of $p$. According to the proof of Lemma \ref{lem6.2}, there exists $1\leq w\leq u-2$ such that $\binom{u-1}{w}\neq 0$ in $K$ and the following holds:
\begin{equation}\label{usefultwo}
[M(u,\overline{0})]=[M(u-w,\overline{0})][M(w+1,\overline{0})]-[M_{2}]-\cdots
-[M_{t}].
\end{equation}
where for any $2\leq s\leq t$, $M_{s}\cong
M(i_{s},\overline{j_{s}})$ for some $i_{s}, j_{s}$ with $1\leq
i_{s}, j_{s}\leq u-2$.

 Finally, the result follows by the induction hypotheses applying for the terms $[M(u-w,\overline{0})],[M(w+1,\overline{0})],[M_{2}],\cdots,[M_{t}]$.
\end{proof}

From this result, there is a unique ring epimorphism
$\phi:\;\mathcal{Z}[y,z,w_{1},\cdots,w_{m-1}]\longrightarrow
r(KZ_{n}/J^{d})$ with
$\phi(1)=[M(1,\overline{0})],\phi(y)=[M(1,\overline{1})],\phi(z)=[M(2,\overline{0})]$
and $\phi(w_{l})=[M(p^{l}+1,\overline{0})],\; \text{for}\; 1\leq
l\leq m-1$.

In the next section, we start to show the relations among the
generators $[M(1,\overline{1})], [M(2,\overline{0})],
[M(p^{l}+1,\overline{0})], \forall 1\leq l\leq m-1$ of the
representation ring $r(KZ_{n}/J^{d})$ and their corresponding
preimages $y,z,w_{1}, \cdots,w_{m-1}$.

\bigskip
\section{{ Polynomial characterization of $r(KZ_{n}/J^{d})$ and application to isomorphism problem}}

This section is devoted to discussion on the relations among the generators of the representation ring $r(KZn/J^{d})$. We will need the following fact.

\begin{Lem}\label{lem7.2}
{\rm Let $R$ be an unital ring and $N_1$, $N_2$ be two unital $R$-modules. If $N_1\oplus
N_2$ has a simple submodule $S$, then either $N_1$ or $N_2$ has a simple
submodule isomorphic to $S$.}
\end{Lem}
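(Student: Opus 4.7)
The plan is to use the two canonical projections from $N_1\oplus N_2$ onto its summands and exploit simplicity of $S$. Let $\pi_i: N_1\oplus N_2 \to N_i$ denote the projection for $i=1,2$, and consider the restricted $R$-module homomorphisms $\pi_i|_S : S \to N_i$. The image $\pi_i(S)$ is an $R$-submodule of $N_i$, and the kernel $\ker(\pi_i|_S)$ is an $R$-submodule of $S$.

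Since $S$ is simple, the kernel $\ker(\pi_i|_S)$ is either $0$ or all of $S$. Correspondingly, $\pi_i|_S$ is either injective (in which case $\pi_i(S) \cong S$ as $R$-modules, making $\pi_i(S)$ a simple submodule of $N_i$ isomorphic to $S$) or the zero map. The key observation is that $\pi_1|_S$ and $\pi_2|_S$ cannot both be zero: if they were, every element $s \in S$ would satisfy $\pi_1(s) = \pi_2(s) = 0$, forcing $s = 0$ and contradicting $S \neq 0$.

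Therefore at least one of $\pi_1|_S, \pi_2|_S$ is injective, giving the desired simple submodule of $N_1$ or $N_2$ isomorphic to $S$. There is essentially no obstacle here; the argument is a direct application of the first isomorphism theorem together with the definition of simplicity. The only care needed is to note that $S$ being simple and nonzero rules out the degenerate case where both projections vanish on $S$.
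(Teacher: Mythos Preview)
Your proof is correct and follows essentially the same idea as the paper: both show that the projection of $S$ onto one summand is an isomorphism onto its image. The paper phrases this via annihilators---writing a generator $b=b_1+b_2$ with $b_1\neq 0$ and observing $\mathrm{Ann}(b_1)=\mathrm{Ann}(b)$ is maximal so $Rb_1\cong S$---whereas you use the kernel of $\pi_i|_S$ directly; these are the same argument in different language.
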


\begin{proof}
Since $S$ is a simple module, then $S=Rb$ for some $b\neq 0$ in $N_1\oplus N_2$. Let $b=b_1+b_2$ for $b_1\in N_1$, $b_2\in
N_2$.  Without loss of generality, say $b_1\neq 0$. Then the annihilator of $b$, that is, ${\rm Ann}(b)=\{r\in R~ |~ rb=0\}$ is
a maximal left ideal of $R$, since $R/{\rm Ann}(b)\cong Rb$ is simple as $R$-module.

However, ${\rm Ann}(b_1)$ is a proper left ideal of $R$ and  ${\rm Ann}(b_1)\supseteq {\rm Ann}(b)$. Therefore, ${\rm Ann}(b_1)={\rm Ann}(b)$ due to the maximum of  ${\rm Ann}(b)$. Then as a submodule of $N_1$, we have $Rb_1\cong R/{\rm Ann}(b_1)=R/{\rm Ann}(b)\cong S$.
\end{proof}

Now, we begin with the following crucial statement.

\begin{Lem}\label{thm7.1}
{\rm {$M(p^{l}+1,\overline{0})\otimes M(kp^{l}+1,\overline{0}) \cong
\left\{\begin{array}{ll}
W_{1}, &\mbox{if~ $k\equiv -1({\rm mod}~p)$} \\
W_{2}, &\mbox{if~ $k\equiv 0({\rm mod}~p)$} \\
W_{3},  &\mbox{otherwise} \\
\end{array}\right.$}
where\\ $W_{1}=M(kp^{l}+p^{l},\overline{0})\oplus
M(kp^{l}+p^{l},\overline{1})\oplus
(\oplus_{j=2}^{p^{l}-1}M(kp^{l},\overline{j}))\oplus M(kp^{l}-p^{l}+1,\overline{p^{l}})$,\\
$W_{2}=M(kp^{l}+p^{l}+1,\overline{0})\oplus(\oplus_{j=1}^{p^{l}}M(kp^{l},\overline{j}))$,\\
$W_{3}=M(kp^{l}+p^{l}+1,\overline{0})\oplus
M(kp^{l}+p^{l}-1,\overline{1})\oplus(\oplus_{j=2}^{p^{l}-1}
M(kp^{l},\overline{j}))\oplus M(kp^{l}-p^{l}+1,\overline{p^{l}}).$ }
\end{Lem}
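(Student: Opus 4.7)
The plan is to apply Proposition \ref{prop5.2} to the tensor product $T := M(p^l+1,\overline{0}) \otimes M(kp^l+1,\overline{0})$ in order to construct $p^l+1$ indecomposable submodules $N_0, N_1, \ldots, N_{p^l}$, with $N_{l'}$ generated at row $\overline{l'}$ of the Pascal triangle ($0 \le l' \le p^l$), and then to verify by dimension count that their sum is direct and exhausts $T$. The mod-$p$ behavior of the binomial coefficients populating each row of the Pascal triangle, controlled by Lucas's theorem, will determine the termination row (and hence length) of each $N_{l'}$, and the three cases of the lemma will correspond to three distinct mod-$p$ vanishing patterns at the "boundary" rows $j = kp^l$ and $j = (k+1)p^l$.

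First I would treat $N_0$, generated by $v^{\overline{0}} \otimes v^{\overline{0}}$ via the initial data $(p^l+1, kp^l+1, 0, \{1\})$: the row-$j$ entry at position $v^{\overline{j-i}} \otimes v^{\overline{i}}$ is the ordinary binomial $\binom{j}{i}$, so by Lucas's theorem the extremal row $j = (k+1)p^l$ contains only the single surviving entry $\binom{(k+1)p^l}{kp^l} \equiv k+1 \pmod p$. This vanishes exactly when $k \equiv -1 \pmod p$, producing $N_0 \cong M((k+1)p^l, \overline{0})$ in case $W_1$ and $N_0 \cong M((k+1)p^l + 1, \overline{0})$ in cases $W_2, W_3$, matching the leading summand in each $W_i$. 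For each subsequent $1 \le l' \le p^l$ I would choose a coefficient sequence $U_{l'}$ so that the row-$l'$ generator of $N_{l'}$ is linearly independent of the row-$l'$ elements of $N_0, \ldots, N_{l'-1}$; the Pascal rules then propagate these coefficients downward, and a second application of Lucas's theorem identifies the termination row. The resulting lengths are $(k+1)p^l$ or $(k+1)p^l - 1$ for $l' = 1$ (depending on $k \pmod p$), $kp^l$ for intermediate $l' \in \{2, \ldots, p^l-1\}$ uniformly in all three cases, and $(k-1)p^l + 1$ or $kp^l$ for $l' = p^l$, which together assemble into $W_1$, $W_2$, or $W_3$ according to $k \pmod p$.

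The main obstacle is the careful combinatorial bookkeeping of the $U_{l'}$ for $1 \le l' \le p^l$ and the independence of the generators across all rows, which requires tracking exactly how the Pascal coefficients behave modulo $p$ when propagated from the chosen starting data. Lemma \ref{lem7.2} provides the clean supplementary tool here: by identifying the socle of each candidate $N_{l'}$ as a simple summand $S_{\overline{b_{l'}}}$ at an appropriate vertex and checking that the multiset $\{\overline{b_{l'}}\}$ reproduces the socle of $T$, one certifies that the sum $\sum_{l'} N_{l'}$ is direct. Once this is in hand, the identity $\sum_{l'=0}^{p^l} \dim N_{l'} = (p^l+1)(kp^l+1) = \dim T$ (verified from Lemma \ref{lem3.1}) forces $T \cong \bigoplus_{l'=0}^{p^l} N_{l'}$, and reassembling this decomposition under the three congruence classes $k \equiv -1, 0,$ or other $\pmod p$ produces exactly $W_1, W_2, W_3$ respectively.
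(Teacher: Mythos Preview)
Your proposal follows essentially the same approach as the paper's proof: construct $p^l+1$ indecomposable submodules via Proposition~\ref{prop5.2} with suitable initial data, identify each using the vanishing pattern of binomial coefficients in the Pascal triangle, establish directness of the sum through Lemma~\ref{lem7.2} by comparing the simple socles, and finish with a dimension count. The paper carries this out explicitly only for the $W_3$ case with $l=1$, $p>2$, giving concrete initial data (the sequence $\{0,\dots,0,1,0,\dots,0\}$ for even index and $\{0,\dots,0,1,-k,0,\dots,0\}$ for odd index), and leaves the remaining cases as similar; your invocation of Lucas's theorem makes the binomial-coefficient analysis cleaner and more uniform across general $l$.

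One small point: your phrasing that the multiset $\{\overline{b_{l'}}\}$ ``reproduces the socle of $T$'' is slightly imprecise as a directness criterion. What is actually needed (and what the paper uses) is that the socles $S_{\overline{b_{l'}}}$ of the $N_{l'}$ are pairwise \emph{non-isomorphic}; then Lemma~\ref{lem7.2} applied inductively forces the sum to be direct. Matching the full socle of $T$ is neither necessary nor by itself sufficient without that distinctness observation.
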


\begin{proof}
Here we only give the precise proof for the $W_{3}$ case with $l=1, p>2$. For the remaining cases, the proofs are similar.

We first construct a sequence of submodules $\{M_{i}\}_{0\leq i\leq
p}$ of $M(p+1,\overline{0})\otimes M(kp+1,\overline{0})$ via the
Pascal triangle, and then prove that
\begin{equation}\label{moduledef1}
M_{0}\cong M(kp+p+1,\overline{0}), M_{1}\cong
M(kp+p-1,\overline{1}),M_{p}\cong M(kp-p+1,\overline{p}),
\end{equation}
\begin{equation}\label{moduledef2}
M_{2}\cong M(kp,\overline{2}), M_{3}\cong M(kp,\overline{3}),\cdots,
M_{p-1}\cong M(kp,\overline{p-1}).
\end{equation}
Lastly show that $M(p+1,\overline{0})\otimes M(kp+1,\overline{0})\cong
\bigoplus_{i=0}^{p}M_{i}$.

{\bf Step $1$.} We define the submodules $\{M_{i}\}_{0\leq i\leq p}$ as follows:
$$M_{l}=\left\{\begin{array}{ll}
M(p+1,kp+1,l,\{0,\cdots,0,1,0,\cdots,0\}), &\mbox{if $l$ is even;} \\
M(p+1,kp+1,l,\{0,\cdots,0,1,-k,0,\cdots,0\}), &\mbox{if $l$ is odd}
\end{array}\right.$$
where the numbers of $0$ on the left-side and the right-side of $1$ are the same, so are on the left-side and the right-side of $\{1,-k\}$, too.

{\bf Step $2$.} Claim that $M_{0}\cong M(kp+p+1,\overline{0})$ and for
even $l=2,4,\cdots,p-1$, $M_{l}\cong M(kp,\overline{l})$. For odd
$l$, the proof is similar.

Indeed, the Pascal triangle associated with $M_{0}$ is given on the left of the following Figure 6, where the coefficients are surrounded by the red
dotted rectangle.
\begin{figure}[ht] \centering
  \includegraphics*[3,422][531,717]{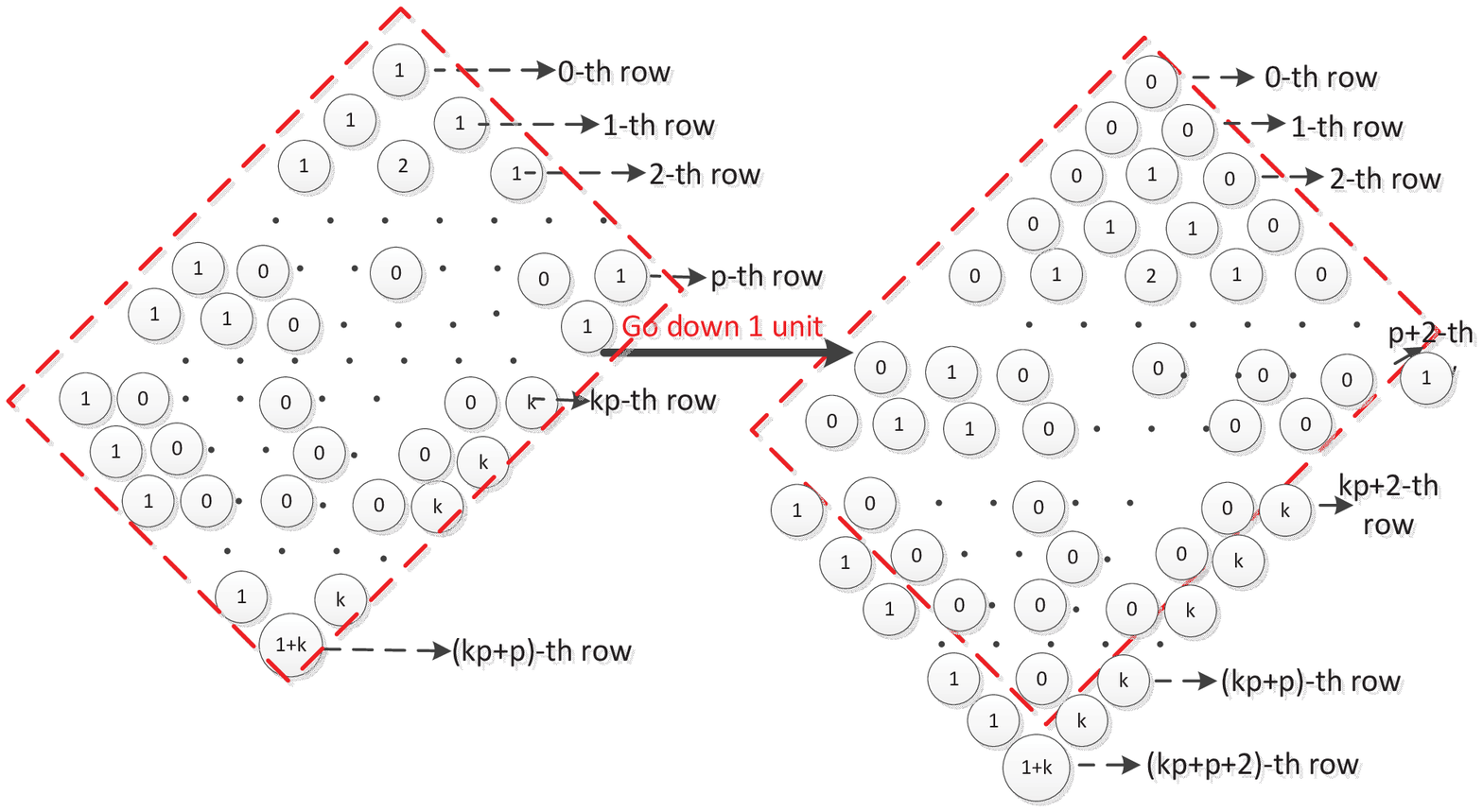}

 {\rm Figure 6 }
\end{figure}

For example, since $\binom{p}{0}=\binom{p}{p}=1$ and
$\binom{p}{i}=0$ in $K$ for $1\leq i\leq p-1$, we see that the
$p$-row of Pascal triangle associated to $M_{0}$ is
$\{1,0,\cdots,0,1\}$. By the definition of $M(kp+p+1,\overline{0})$,
it follows $M_{0}\cong M(kp+p+1,\overline{0})$ at once.

Furthermore, operating $M_{0}$'s Pascal triangle by going down $1$
unit, we obtain the Pascal triangle associated with $M_{2}$, that
is, that on the right-side of Figure 6, where the coefficients
are again surrounded by the red dotted rectangle. It is clear that
$M_{2}\cong M(kp,2)$ by the definition of $M(kp,\overline{2})$.

Continue this process, through operating $M_{0}$'s Pascal triangle
by going down $2,3,\cdots,\frac{p-1}{2}$ units, it follows that
$M_{l}\cong M(kp,\overline{l})$ for even $l=4,\cdots,p-1$.

{\bf Step $3$.} It remains to show that $M(p+1,\overline{0})\otimes
M(kp+1,\overline{0})\cong \bigoplus_{l=0}^{p}M_{l}.$

By Proposition \ref{prop5.2}, all $M_{l}$, as well as
$\sum_{l=0}^{p} M_{l}$, are submodules of
$M(p+1,\overline{0})\otimes M(kp+1,\overline{0})$.
 Next, we claim that $\sum_{l=0}^{p} M_{l}$ is indeed a direct sum by induction.

 For any $i$ with $0\leq i\leq n-1$, we denote by  $\beta_{l,i}$ the basis of the vector space $\{(M_{l})_{\overline{i}}\}$  when $(M_{l})_{\overline{i}}\neq 0$ and denote by ${\rm P}_{l}$ the Pascal triangle corresponding to the module $M_{l}$.

It is easy to see that $T_0=S_{\overline{kp+p+1}}$,
$T_1=S_{\overline{kp+p-1}}$, $T_i=S_{\overline{kp+i-1}}, \forall
2\leq i\leq p-1$, $T_p=S_{\overline{kp}}$ is respectively  the
unique simple submodule of $M_{0}$, $M_{1}$, $M_{i}, \forall 2\leq
i\leq p-1$, $M_{p}$.

 When $p=1$, we have $M_0\cap M_1=\{0\}$ since $T_0 \ncong
T_1$. Hence $M_0+M_1$ is a direct sum.

Assume that for $p-1$ the result holds, that is, $\sum_{j=0}^{p-1} M_{l_j}$ is a
direct sum for any subset $\{l_0,l_1,\cdots,l_{p-1}\}$ of $\{0,1,\cdots,p\}$. Now consider the case of $p$ for the sum $\sum_{l=0}^{p} M_{l}$.

By Lemma \ref{lem7.2} and the induction assumption, the direct sum $\sum_{j=0}^{p-1} M_{l_j}=\bigoplus_{j=0}^{p-1} M_{l_j}$ has just all $p$ simple submodules  $T_{l_0}$, $\cdots$, $T_{l_{p-1}}$, all of which are not isomorphic to $T_{l_p}$, the unique simple submodule of $M_{l_p}$  for $l_p$.
 Thus, $\bigoplus_{j=0}^{p-1} M_{l_j}$ has no any
simple submodule isomorphic to $T_{l_p}$. It follows that for any
$l_p\in\{0,1,\cdots,p\}$,
$$(\bigoplus \limits_{j=0}^{p-1} M_{l_j})\cap M_{l_p}=\{0\}.$$
  Therefore, $\sum\limits_{l=0}^{p}M_{l}=\sum\limits_{j=0}^{p}M_{l_j}=\bigoplus_{j=0}^{p}M_{l_j}$ is a direct sum.

 Finally, let us show that the dimension vectors of
$M(p+1,\overline{0})\otimes M(kp+1,\overline{0})$ and
$\bigoplus_{l=0}^{p}M_{l}$ are the same. It follows from the
equality below, that is, for each $i$ with $0\leq i\leq p$,
\begin{equation}\label{compare}
 {\rm dim}(M(p+1,\overline{0})\otimes M(kp+1,\overline{0}))_{\overline{i}}=\left\{
\begin{array}{ll}
i+1,  &\mbox{if ~$0\leq i\leq p-1;$}\\
p+1,  &\mbox{if ~$p\leq i\leq kp;$}\\
kp+p+1-i,  &\mbox{if ~$kp+1\leq i\leq kp+p;$}\\
0,  &\mbox{otherwise.}\\
\end{array}
\right. =\sum_{l=0}^{p} {\rm dim}(M_{l})_{\overline{i}}.
\end{equation}

In fact, the basis of $M(p+1,\overline{0})\otimes
M(kp+1,\overline{0})$ is $\{v^{\overline{s}}\otimes
v^{\overline{t}}|0\leq s\leq p, 0\leq t\leq kp\}$ and for each case
on $i$, ${\rm dim}(M(p+1,\overline{0})\otimes
M(kp+1,\overline{0}))_{\overline{i}}$ can be easily calculated to
obtain the value in (\ref{compare}) by the partition of the
rectangle $(0,0),(0,p),(kp,0),(kp,p)$ in the $x$-$y$-coordinate
system via the diagonals $x+y=l$. On the other hand, ${\rm dim}
(\sum_{l=0}^{p}M_{l})_{\overline{i}}=\sum_{l=0}^{p} {\rm
dim}(M_{l})_{\overline{i}}$, which equals to the middle value in
(\ref{compare}) for each case on $i$ according to the isomorphisms
$$M_{0}\cong M(kp+p+1,\overline{0}),~~ M_{1}\cong M(kp+p-1,\overline{1}),~~ M_{p}\cong
M(kp-p+1,\overline{p})$$
$$M_{2}\cong M(kp,\overline{2}),~~ M_{3}\cong M(kp,\overline{3}),~~\cdots,~~ M_{p-1}\cong M(kp,\overline{p-1}).$$
\end{proof}

\begin{Rem}
{\rm By Lemma \ref{thm7.1}, we can write down some relations about
the generators
$$[M(1,\overline{1})],[M(2,\overline{0})],[M(p^{l}+1,\overline{0})], 1\leq l\leq m-1$$
of the representation ring $r(KZ_{n}/J^{d})$. For example, choose
$n=d=27=3^{3}, m=1, {\rm char}K=p=3, l=1, k=3$, we obtain that
\begin{equation}\label{newrelation}
M(4,\overline{0})\otimes M(10,\overline{0})\cong
M(13,\overline{0})\oplus M(9,\overline{1})\oplus
M(9,\overline{2})\oplus M(9,\overline{3}).
\end{equation}

However, in the Green ring of the Taft algebra $KZ_{27}/J^{27}$ calculated in \cite{[7]}, the relation was that
\begin{equation}\label{oldrelation}
M(4,\overline{0})\otimes M(10,\overline{0})\cong
M(13,\overline{0})\oplus M(11,\overline{1})\oplus
M(9,\overline{2})\oplus M(7,\overline{3})
\end{equation}
 by the classical Clebsch-Gordan-like formula in \cite{[8]}.

 Note that the relations (\ref{newrelation}) and (\ref{oldrelation}) is various. The reason is that although the Taft algebras and the generalized Taft algebras in \cite{[7],[17]} are as the special cases of the Nakayama truncated algebras we are discussing, their coalgebra structures, as well as the Green rings, are different with that of the Nakayama truncated algebras. }
\end{Rem}

Now we can realize the representation ring $r(KZ_{n}/J^{d})$ as the quotients of polynomial ring by using Lemma \ref{thm7.1}. As preparation, we first give
the formula of a linear recurrent sequence. Given two initial values $a_{1},a_{2}$ and the recursion relation $$a_{n}=za_{n-1}-ya_{n-2},
~ n\geq 2$$ with fixed $y,z$ satisfying $z^{2}-4y\neq 0$, by the well-known result on linear recurrence sequences, we can write

\begin{equation}\label{lrssolution}
a_{n}=Ax_{1}^{n-1}+Bx_{2}^{n-1},
\end{equation}
where $x_{1}=\frac{z+\sqrt{\Delta}}{2}, x_{2}=\frac{z-\sqrt{\Delta}}{2}, \Delta=z^{2}-4y$ and
$A=\frac{2a_{2}-(z-\sqrt{\Delta})a_{1}}{2\sqrt{\Delta}},
B=\frac{(z+\sqrt{\Delta})a_{1}-2a_{2}}{2\sqrt{\Delta}}.$

Substituting $A,B,x_1,x_2$ into the formula (\ref{lrssolution}), we obtain that
\begin{equation}\label{lrs2}
 a_{n}=\frac{2a_{2}-(z-\sqrt{\Delta})a_{1}}{2\sqrt{\Delta}}(\frac{z+\sqrt{\Delta}}{2})^{n-1}+\frac{(z+\sqrt{\Delta})a_{1}-2a_{2}}{2\sqrt{\Delta}}(\frac{z-\sqrt{\Delta}}{2})^{n-1}.
\end{equation}
Write the right of (\ref{lrs2}) as a polynomial $g$ of $a_1,a_2$,
we denote $a_{n}$ as $g(n,a_{1},a_{2})$.

For the following discussion, we first define the order
\begin{equation}\label{order}
y<z<w_{1}<w_{2}<\cdots<w_{m-1}
\end{equation}
 in the polynomial ring
$\mathds{Z}[y,z,w_{1},\cdots,w_{m-1}]$ and then consider the order of the monomials
according to the dictionary order.

For a monomial $az_1^{i_1}\cdots z_n^{i_n}$ of a polynomial $f(z_1,\cdots,z_n)$, denote by $I(ax_1^{i_1}\cdots x_n^{i_n})=(i_1,\cdots,i_n)$, which is called the index series of the monomial $az_1^{i_1}\cdots z_n^{i_n}$.
Arranging its monomials in a descending order, the leading term of the
polynomial $f$ is the largest monomial, denoted as $lt(f)$. Meantime, we denote $I(f)$ as the index series of the leading term $lt(f)$ of the polynomial $f$ under the descending order, i.e. $I(f)=I(lt(f))$.

Now we are in the position of determining the relations about the generators $[M(1,\overline{1})],[M(2,\overline{0})]$ and
$[M(p^{l}+1,\overline{0})] (\forall 1\leq l\leq m-1)$ of the Nakayama truncated algebra $KZ_{n}/J^{d}$.

{\bf Step $1$.} Determine the relations about $[M(1,\overline{1})]$
and $[M(2,\overline{0})]$.

By Lemma \ref{lem6.1} (i),  the relation $[M(1,
\overline{1})]^{n}=[M(1,\overline{0})]$ holds, which corresponds to
$y^{n}-1=0$.

By Lemma \ref{lem6.1} (iii), the relation
$([M(2,\overline{0})]-[M(1,\overline{1})]-[M(1,\overline{0})])[M(p,\overline{0})]=0$
holds, which corresponds to $(z-y-1)g(p,1,z)=0$.

Now define
$$g_{0}(y,z,w_{1},\cdots,w_{m-1})=y^{n}-1,\;\;\;\;\;\;\;g_{1}(y,z,w_{1},\cdots,w_{m-1})=(z-y-1)g(p,1,z).$$
 Note the leading terms of
$g_{0}(y,z,w_{1},\cdots,w_{m-1})$ and $g_{1}(y,z,w_{1},\cdots,w_{m-1})$ are $y^{n}$ and $z^{p}$, respectively.

{\bf Step $2$.} Determine the relations about
$[M(p^{l}+1,\overline{0})]$ for $1\leq l\leq m-1$.

Firstly, choose $k=1,2,\cdots,p-1$ in Lemma \ref{thm7.1} successively, we obtain the following equalities:

$[M(p^{l}+1,\overline{0})]^{2}=[M(2p^{l}+1,\overline{0})]+\cdots$

$[M(p^{l}+1,\overline{0})][M(2p^{l}+1,\overline{0})]=[M(3p^{l}+1,\overline{0})]+\cdots$

$.............$

$[M(p^{l}+1,\overline{0})][M((p-2)p^{l}+1,\overline{0})]=[M((p-1)p^{l}+1,\overline{0})]+\cdots$

$[M(p^{l}+1,\overline{0})][M((p-1)p^{l}+1,\overline{0})]=[M(p^{l+1},\overline{0})]+[M(p^{l+1},\overline{1})]+\cdots$

Then, from the first formula, we have
$[M(2p^{l}+1,\overline{0})]=[M(p^{l}+1,\overline{0})]^{2}-\cdots$,
by substituting it into the second formula, we obtain
$[M(3p^{l}+1,\overline{0})]=[M(p^{l}+1,\overline{0})]^{3}-\cdots$,
then by substituting $[M(3p^{l}+1,\overline{0})]$ into the third
formula, $\cdots\cdots$, by continuing this process, we obtain that
$[M((p-1)p^{l}+1,\overline{0})]=[M(p^{l}+1,\overline{0})]^{p-1}-\cdots$.
By substituting it into the last formula, we finally obtain a
polynomial $g_{l+1}$ such that the formula
$$g_{l+1}([M(1,\overline{1})], [M(2,\overline{0})], [M(p+1,\overline{0})],
[M(p^{2}+1,\overline{0})],\cdots,[M(p^{l}+1,\overline{0})])=0.$$
holds.

Then after substituting $[M(1,\overline{1})]$ by $y$,
$[M(2,\overline{0})]$ by $z$ and $[M(p^{s}+1,\overline{0})]$ by
$w_{s}$, $1\leq s\leq l$ respectively, we have the polynomials
$g_{l+1}(y,z,w_{1},\cdots,w_{l})$ for $1\leq l\leq m-1.$

{\bf Step $3$.} Finally, we construct the ideal $I$ of
$\mathds{Z}[y,z,w_{1},\cdots,w_{m-1}]$ generated by the polynomials
$g_{i}(y,z,w_{1},\cdots,w_{m-1}), 0\leq i\leq m$.

The following observations are crucial.

\begin{Lem}\label{lem7.4}
\rm {Given $l,s$ satisfying $1\leq l\leq m, 2\leq s\leq p$, let
$p\leq u\leq sp^{l}, 0\leq r\leq n-1$ and denote by
$f_{u,\overline{r}}$ the corresponding polynomial of
$[M(u,\overline{r})]$ generated by
$[M(1,\overline{1})],[M(2,\overline{0})]$ and
$[M(p^{l}+1,\overline{0})],1\leq l\leq m-1$ in Theorem \ref{thm6.3}.
  Then $f_{u,\overline{r}}$ is a polynomial in $\mathds{Z}[y,z,w_{1},\cdots,w_{l}]$, however, no $w_{l}^{t} ~ (t\geq s)$ appears in any monomials of  $f_{u,\overline{r}}$.}
\end{Lem}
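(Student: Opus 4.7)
The plan is to handle the two assertions separately. The first, that $f_{u,\bar r}\in\Z[y,z,w_1,\ldots,w_l]$, is immediate from Corollary \ref{lem6.4} applied with level $l+1$: since $u\leq sp^l\leq p^{l+1}$, $[M(u,\bar r)]$ is already expressible as a polynomial in $[M(1,\bar 1)],[M(2,\bar 0)],[M(p+1,\bar 0)],\ldots,[M(p^l+1,\bar 0)]$, hence in $y,z,w_1,\ldots,w_l$.

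For the second assertion I would induct on $u$, in fact proving the sharper statement $\deg_{w_l}(f_{u,\bar 0})\leq \lceil u/p^l\rceil-1$; the case $r>0$ reduces to $r=0$ via the identity $[M(u,\bar r)]=[M(u,\bar 0)][M(1,\bar 1)]^r$ of Lemma \ref{lem3.3}, which does not touch $w_l$. Base cases: if $u\leq p^l$, Corollary \ref{lem6.4} shows $f_{u,\bar 0}$ involves only $y,z,w_1,\ldots,w_{l-1}$, so $\deg_{w_l}=0$; the only remaining case with $u-1$ a power of $p$ is $u=p^l+1$, for which $f_{u,\bar 0}=w_l$ gives $\deg_{w_l}=1$.

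For the inductive step, suppose $u\geq p^l+2$ with $u-1$ not a power of $p$. I apply the recurrence of Lemma \ref{lem6.2},
\[
[M(u,\bar 0)]=[M(u-w,\bar 0)][M(w+1,\bar 0)]-\sum_{k=2}^{t}[M_k],
\]
with the specific choice $w=u-1-p^l$. This $w$ lies in $[1,u-2]$ since $u\geq p^l+2$, and by Lucas' theorem $\binom{u-1}{p^l}$ is congruent modulo $p$ to the $l$-th base-$p$ digit $a_l$ of $u-1$, which satisfies $1\leq a_l\leq s-1\leq p-1$ because $p^l\leq u-1<sp^l$; therefore $\binom{u-1}{w}\not\equiv 0\pmod{p}$ and the recurrence applies. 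With this choice, $u-w=p^l+1$ forces $f_{u-w,\bar 0}=w_l$ with $\deg_{w_l}=1$, while $w+1=u-p^l\leq(s-1)p^l$ lies within the inductive scope with parameter $s-1$, yielding $\deg_{w_l}(f_{w+1,\bar 0})\leq s-2$; the product term therefore has $w_l$-degree at most $s-1$. Each subtracted summand $M_k\cong M(i_k,\bar{j_k})$ has $i_k\leq u-2<u$, and Lemma \ref{lem3.3} combined with the induction hypothesis bound its $w_l$-degree by $s-1$ as well.

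The principal obstacle is producing a single $w$ that simultaneously (a) makes $\binom{u-1}{w}$ nonzero modulo $p$ so that the decomposition of Lemma \ref{lem6.2} is at one's disposal, and (b) splits $u$ into two factors whose $w_l$-degrees sum to at most $s-1$. The uniform choice $w=u-1-p^l$, together with Lucas' theorem to verify (a), resolves both requirements and forms the conceptual heart of the argument.
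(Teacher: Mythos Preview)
Your argument is correct and in fact tidier than the paper's. Both proofs reduce to $r=0$ via Lemma~\ref{lem3.3} and invoke Corollary~\ref{lem6.4} for the membership in $\Z[y,z,w_1,\ldots,w_l]$, but they diverge on the degree bound. The paper runs a double induction, first on $s$ and then on $u$ inside each $s$-stratum, and in the inductive step uses an \emph{arbitrary} $w$ with $\binom{u-1}{w}\neq 0$; the key inequality then comes from the pigeonhole observation that $(u-w)+(w+1)=u+1\le sp^l+1$ forces the ceilings $\lceil(u-w)/p^l\rceil+\lceil(w+1)/p^l\rceil\le s+1$, so the two factors' $w_l$-degrees sum to at most $s-1$. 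This step is left as ``similar to the case $s=2$'' in the paper and takes some unpacking to see. You instead prove the sharper bound $\deg_{w_l} f_{u,\bar 0}\le\lceil u/p^l\rceil-1$ by a single induction on $u$, choosing the \emph{specific} $w=u-1-p^l$ so that one factor is exactly $[M(p^l+1,\bar 0)]$ (degree $1$) and the other is $[M(u-p^l,\bar 0)]$ (degree $\le\lceil u/p^l\rceil-2$ by induction). The Lucas-theorem check that $\binom{u-1}{p^l}\equiv a_l\not\equiv 0\pmod p$ is clean and replaces the paper's existential choice of $w$ with an explicit one. Your route is shorter and avoids the somewhat elliptic ``similar'' in the paper; the paper's route has the minor conceptual advantage of showing the bound is insensitive to which admissible $w$ one picks.
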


\begin{proof}
It is obvious that the polynomial $f_{u,\overline{r}}$ is a polynomial in
$\mathds{Z}[y,z,w_{1},\cdots,w_{l}]$ according to Corollary \ref{lem6.4}
and the correspondence among  $[M(1,\overline{1})],
[M(2,\overline{0})]$, $[M(p^{l}+1,\overline{0})]$ and $y$, $z$,
$w_l$ for $1\leq l\leq m-1$.

 Now we only need to prove that $w_l^t \; (t\geq s)$ will not appear in any monomials of $f_{u,\overline{r}}$.
 It is sufficient to prove the result for $[M(u,\overline{0})]$ instead of $[M(u,\overline{r})]$ since $I(f_{u,\overline{r}})=I(f_{u,\overline{0}})+(0,0,\cdots,0,r)$ for $u\leq p$. We will prove the result by induction on $s$ satisfying $2\leq s \leq p$.

When $s=2$, we have $p\leq u\leq 2p^{l}$. We will claim that
$I(f_{u,\overline{0}})<I(w_{l}^{2})$.

In the case $p\leq u\leq p^{l}$, the result follows from Corollary
\ref{lem6.4} since $w_{l}$ does not appear in the polynomial
$f_{u,\overline{0}}$.

 In the case $p^{l}+1\leq u\leq 2p^{l}$, we will use the induction on $u$. Firstly if $u=p^{l}+1$, then $I(f_{u,0})=I(w_{l})<I(w_{l}^{2})$. Now we assume the result holds for $u'$, $p^{l}+1\leq u'< u\leq 2p^{l}$. Since in this case $u-1$ is not a power of $p$, we have $[M(u,\overline{0})]=[M(u-w,\overline{0})][M(w+1,\overline{0})]-[M_{2}]-\cdots -[M_{t}]$ for some $w$ such that $1\leq w\leq u-2$ and $\binom{u-1}{w}\neq 0$ in $K$, due to the proof of Lemma \ref{lem6.2}. Moreover, $M_{k}\cong M(i_{k},\overline{j_{k}})$ for any $2\leq k\leq t$ with $1\leq i_{k}, j_{k}\leq u-2$.

By the induction hypotheses on $u$, we have
$I(f_{i_{k},\overline{j_{k}}})<I(w_{l}^{2})$ for $2\leq k\leq t$.
Thus it remains to prove
$I(f_{u-w,\overline{0}}f_{w+1,\overline{0}})<I(w_{l}^{2})$. Since
$(u-w)+(w+1)=u+1\leq 2p^{l}+1$, we obtain that either $u-w<p^{l}+1$
or $w+1<p^{l}+1$. Assume the former case is true, then by Corollary
\ref{lem6.4}, we have $I(f_{u-w,\overline{0}})<I(w_{l})$, i.e.,
$w_{l}$ does not appear in $f_{u-w,0}$. Again by the induction
hypotheses on $u$, since $w+1<u$, we have
$I(f_{w+1,\overline{0}})<I(w_{l}^{2})$. Therefore,
$$I(f_{u-w,\overline{0}}f_{w+1,\overline{0}})=I(f_{u-w,\overline{0}})+I(f_{w+1,\overline{0}})<I(w_{l}^{2}).$$ In
the latter case, i.e. $w+1<p^{l}+1$, the similar discussion can be
given.

Hence the result is proved when $s=2$.

Now we assume for all integers $s'$ with $2\leq s'<s$, the result is
true. Then if $p\leq u\leq (s-1)p^{l}$, by the induction hypotheses
on $s$, we have $I(f_{u,\overline{0}})<I(w_{l}^{s-1})<I(w_{l}^{s})$,
which means the result to be true in this case.

Now we consider the result in the case $(s-1)p^{l}+1\leq u\leq sp^{l}$, using the induction on $u$ simultaneously.

Actually, for any $u, (s-1)p^{l}+1\leq u\leq sp^{l}$, since $u-1$ is
not a power of $p$, thus we again have the equation
$[M(u,\overline{0})]=[M(u-w,\overline{0})][M(w+1,\overline{0})]-[M_{2}]-\cdots
-[M_{t}]$ for some $w$ satisfying $1\leq w\leq u-2$ and
$\binom{u-1}{w}\neq 0$ in $K$. In this case the remaining proof is
similar with that in the case $s=2$ by replacing $I(w^2_l)$ with
$I(w^s_l)$. Indeed, we will use the fact that $(u-w)+(w+1)=u+1\leq
sp^{l}+1$ and the induction hypotheses on $u$.
\end{proof}

\begin{Lem}\label{lem7.10}
{\rm Under the order $y<z<w_{1}<w_{2}<\cdots<w_{m-1}$, the leading terms of the polynomials $g_{0}, g_{1},g_{2},\cdots,g_{m}$ constructed above are $y^{n}, z^{p},w_{1}^{p},\cdots,w_{m-1}^p$ respectively.}
\end{Lem}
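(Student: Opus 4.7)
The plan is to dispose of $g_0$ and $g_1$ directly from their closed forms, and then analyze $g_{l+1}$ by tracking the leading term through the substitution chain used in its construction.

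For $g_0 = y^n - 1$ the leading term $y^n$ is immediate. For $g_1 = (z-y-1)\,g(p,1,z)$, a short induction on $n$ applied to the recurrence $a_n = z a_{n-1} - y a_{n-2}$ with $a_1=1,\,a_2=z$ shows $lt(a_n) = z^{n-1}$; multiplication by $(z-y-1)$ then gives $lt(g_1)=z^p$.

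For $g_{l+1}$ with $1 \le l \le m-1$, I would first establish by induction on $k$, running $1 \le k \le p-1$, the refined statement
\[
  [M(kp^{l}+1,\overline{0})] \;=\; w_{l}^{\,k} + R_{k}(y,z,w_{1},\ldots,w_{l}),\qquad \deg_{w_{l}} R_{k} < k.
\]
The base $k=1$ holds trivially since $[M(p^{l}+1,\overline{0})]=w_l$. For the inductive step, Lemma \ref{thm7.1} applied to $M(p^{l}+1,\overline{0}) \otimes M(kp^{l}+1,\overline{0})$ (the $W_3$ case for $1 \le k \le p-2$) yields
\[
  [M((k+1)p^{l}+1,\overline{0})] \;=\; w_{l}\cdot[M(kp^{l}+1,\overline{0})] \;-\; \sum_{u,r}[M(u,\overline{r})],
\]
summed over the remaining summands, which all satisfy $u \le (k+1)p^{l}-1$. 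By Corollary \ref{lem6.4} each such term already lies in $\mathds{Z}[y,z,w_{1},\ldots,w_{l}]$, and by Lemma \ref{lem7.4} applied with $s=k+1$ each has $w_l$-degree strictly below $k+1$; combined with $\deg_{w_l}(w_l R_k) \le k < k+1$, this gives the induction step with the correct leading coefficient $1$.

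Applying the same decomposition at $k = p-1$ (the $W_1$ case, since $p-1 \equiv -1 \pmod p$), the defining relation of $g_{l+1}$ becomes
\[
  g_{l+1} \;=\; w_{l}\cdot[M((p-1)p^{l}+1,\overline{0})] \;-\; [M(p^{l+1},\overline{0})] \;-\; [M(p^{l+1},\overline{1})] \;-\; (\text{further summands}).
\]
Substituting the $k=p-1$ case of the induction gives $w_{l}\cdot[M((p-1)p^{l}+1,\overline{0})] = w_{l}^{p} + w_{l} R_{p-1}$ with $\deg_{w_l}(w_l R_{p-1}) < p$, while Lemma \ref{lem7.4} with $s=p$ bounds the $w_l$-degree of $f_{p^{l+1},\overline{0}}$, $f_{p^{l+1},\overline{1}}$ and of each remaining correction term strictly below $p$. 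Hence $g_{l+1} = w_l^p + (\text{terms of } w_l\text{-degree} < p)$, and since no variable larger than $w_l$ occurs anywhere in the construction, $w_l^p$ is the leading term in the dictionary order, as required.

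The main obstacle is the bookkeeping in this substitution chain: one must verify at every stage that newly introduced correction terms cannot lift the $w_l$-degree up to $k+1$ (respectively up to $p$ at the final step). The key technical input that makes this automatic is Lemma \ref{lem7.4}; once that bound is in hand, the leading-term assertion falls out of the induction almost mechanically.
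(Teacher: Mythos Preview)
Your proof is correct and follows essentially the same approach as the paper: both arguments run the substitution chain of Step~2 to obtain $[M(kp^{l}+1,\overline{0})]=w_l^{k}+(\text{lower }w_l\text{-degree})$ and then invoke Lemma~\ref{lem7.4} to bound the correction terms, arriving at $g_{l+1}=w_l^{p}+(\text{lower})$. Your version is in fact more explicit than the paper's, which compresses the induction on $k$ into a single sentence and appeals to Lemma~\ref{lem7.4} only once at the end.
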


\begin{proof} The leading terms of $g_{0}, g_{1}$ are $y^{n}, z^{p}$
 clearly due to the definition of $g_{0}, g_{1}$.

 Due to {\bf Step $2$} in this section, for $2\leq s\leq m$,   we have a sequence of the equalities in the form $[M(p^{s-1}+1,\overline{0})]^{p}=\sum[M(r,\overline{t})]$ with $r\leq p^{s}$,
 which correspond respectively to the equalities $w_{s-1}^{p}=\sum f_{r,\overline{t}}$ satisfying $r\leq p^{s}$. Then according to the construction of $g_s$ in
 {\bf Step $2$}, the polynomial $g_s=w_{s-1}^{p}-\sum f_{r,\overline{t}}$. By Theorem \ref{thm6.3} and Lemma \ref{lem7.4},
 all the terms $f_{r,\overline{t}}$ with $r\leq p^{s}$ on the right side of the equations can be represented by a polynomial with variables $y,z,w_{1},\cdots,w_{s-1}$ and
 $I(f_{r,\overline{t}})<I(w_{s-1}^{p})$. Thus, the result follows at once.
\end{proof}

With the above constructions, we have the following main theorem.

\begin{Thm}\label{thm7.5}
{\rm Let {\rm char}$K=p$ and $n\geq d=p^m$, then the representation
ring $r(KZ_{n}/J^{d})$ of the Nakayama truncated algebra
$KZ_{n}/J^{d}$ is isomorphic to the quotient ring
$\mathds{Z}[y,z,w_{1},\cdots,w_{m-1}]/I$, induced by the ring
epimorphism $$\phi:
\mathds{Z}[y,z,w_{1},\cdots,w_{m-1}]\longrightarrow
r(KZ_{n}/J^{d})$$ satisfying
$\phi(1)=[M(1,\overline{0})],\phi(y)=[M(1,\overline{1})],\phi(z)=[M(2,\overline{0})]$ and
 $\phi(w_{l})=[M(p^{l}+1,\overline{0})], \forall \;1\leq l\leq m-1$,
where the ideal $I$ is generated by the polynomials
$g_{i}(y,z,w_{1},\cdots,w_{m-1}), 0\leq i\leq m$ defined as
above.}
\end{Thm}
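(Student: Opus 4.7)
The plan is to establish Theorem \ref{thm7.5} by combining the surjectivity of $\phi$ with a rank comparison argument, using the leading-term information from Lemma \ref{lem7.10} as the essential tool to bound the size of the quotient.

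First I would check that $\phi$ is a well-defined ring epimorphism. The cocommutativity of $KZ_n/J^d$ (Proposition \ref{prop3.2}) makes $r(KZ_n/J^d)$ commutative, so extending the assignments on $1,y,z,w_1,\ldots,w_{m-1}$ to a ring map from the polynomial ring is automatic. Surjectivity is Theorem \ref{thm6.3}. Next I would verify the containment $I\subseteq\ker(\phi)$: the polynomial $g_0$ comes from Lemma \ref{lem6.1}(i), $g_1$ from Lemma \ref{lem6.1}(iii), and each $g_{l+1}$ ($1\le l\le m-1$) is, by construction in Step 2 of Section 7, a translation of a genuine identity $[M(p^l+1,\bar 0)]^p=\sum[M(r,\bar t)]$ obtained by iterating Lemma \ref{thm7.1}, after rewriting the right-hand indecomposables as polynomials in the generators via Corollary \ref{lem6.4}. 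Hence $\phi$ factors through a surjection $\bar\phi:\mathds Z[y,z,w_1,\ldots,w_{m-1}]/I\twoheadrightarrow r(KZ_n/J^d)$.

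The main content is then to bound the $\mathds Z$-rank of the quotient ring by $nd$. By Lemma \ref{lem7.10}, under the order $y<z<w_1<\cdots<w_{m-1}$ the leading terms of $g_0,g_1,\ldots,g_m$ are $y^n,z^p,w_1^p,\ldots,w_{m-1}^p$, respectively. Running a standard multivariate division against $\{g_0,g_1,\ldots,g_m\}$, every monomial reduces modulo $I$ to a $\mathds Z$-linear combination of the monomials
\[
\mathcal{B}=\bigl\{\,y^{a}z^{b}w_1^{c_1}\cdots w_{m-1}^{c_{m-1}}\;\bigm|\;0\le a<n,\ 0\le b<p,\ 0\le c_i<p\,\bigr\},
\]
because each $g_i$ permits us to rewrite its leading monomial as a $\mathds Z$-combination of strictly smaller monomials (in the dictionary order), and any monomial containing $y^n$, $z^p$, or some $w_l^p$ as a factor can be stripped of such a factor by iterated substitution; termination follows from the well-foundedness of the monomial order together with the uniform bound on $y$-exponents supplied by $g_0$. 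The cardinality of $\mathcal B$ is
\[
|\mathcal B|=n\cdot p\cdot p^{m-1}=np^{m}=nd.
\]
Thus $\mathds Z[y,z,w_1,\ldots,w_{m-1}]/I$ is generated as a $\mathds Z$-module by at most $nd$ elements.

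To conclude, I would invoke the fact that $r(KZ_n/J^d)$ is a free abelian group of rank exactly $nd$, having the $\mathds Z$-basis $\{[M(i,\bar j)]:1\le i\le d,\ 0\le j\le n-1\}$ by the Krull–Schmidt property applied to the $nd$ isomorphism classes of indecomposables listed in Section 3.1. The surjection $\bar\phi$ then maps a $\mathds Z$-module generated by at most $nd$ elements onto a free $\mathds Z$-module of rank $nd$, which forces $|\mathcal B|=nd$ to be a $\mathds Z$-basis of the quotient and $\bar\phi$ to be an isomorphism.

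The step I expect to be the main obstacle is the rigorous reduction argument showing that $\mathcal B$ spans the quotient: the polynomials $g_{l+1}$ are not an obvious Gröbner basis, only their leading terms are pure powers of single variables, and one must verify that repeated rewriting using $g_0,g_1,\ldots,g_m$ does in fact terminate and produce a member of the $\mathds Z$-span of $\mathcal B$. The cleanest route is to order monomials lexicographically with $y<z<w_1<\cdots<w_{m-1}$, use $g_0$ to reduce the $y$-exponent below $n$, then use $g_1,g_2,\ldots,g_m$ in turn (each of whose leading term is a pure power of a single variable strictly larger than the earlier ones) to reduce the $z$- and $w_l$-exponents below $p$; Lemma \ref{lem7.4} guarantees that these reductions do not reintroduce higher powers of variables already reduced, so the procedure terminates in $\mathcal B$.
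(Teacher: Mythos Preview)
Your proposal is correct and follows essentially the same route as the paper: factor $\phi$ through $\bar\phi$, use Lemma \ref{lem7.10} to show the monomials in $\mathcal B$ span the quotient (the paper simply asserts $B$ is a $\mathds Z$-basis), and compare ranks with the free abelian group $r(KZ_n/J^d)$ of rank $nd$. Your treatment is in fact a bit more careful than the paper's on two points: you justify the spanning-set reduction explicitly, and your endgame (a surjection onto a free $\mathds Z$-module of rank $nd$ from a module with $nd$ generators must be an isomorphism) is cleaner than the paper's appeal to ``same rank $\Rightarrow$ isomorphic''. One minor remark: you do not actually need Lemma \ref{lem7.4} for termination, since by construction each $g_{l+1}$ lies in $\mathds Z[y,z,w_1,\ldots,w_l]$ and hence cannot reintroduce any $w_k$ with $k>l$; reducing in the order $g_m,g_{m-1},\ldots,g_1,g_0$ already terminates for this reason alone.
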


\begin{proof}
Firstly, by the construction of the ideal $I$, the ring epimorphism $\phi$ can induce a
ring epimorphism
\begin{equation}\label{isom}
\bar{\phi}:\mathds{Z}[y,z,w_{1},\cdots,w_{m-1}]/I\rightarrow r(KZ_{n}/J^{d})
 \end{equation}
such that $\bar{\phi}(\bar{v})=\phi(v)$ for all $v \in
 \mathds{Z}[y,z,w_{1},\cdots,w_{m-1}]$.

 In order to prove that $\bar{\phi}$ is a ring isomorphism, it is enough to claim that the ranks of two sides of the epimorphism $\bar{\phi}$ in
(\ref{isom}) are equal.

By Lemma \ref{lem7.10}, the leading terms of the polynomials $g_{0}, g_{1},g_{2},\cdots,g_{m}$ are $y^{n}, z^{p},w_{1}^{p},\cdots,w_{m-1}^p$, respectively. Thus it is easily seen that
$$B=\{y^{i}z^{j}w_{1}^{l_{1}}\cdots w_{m-1}^{l_{m-1}}~ |~ 0\leq i<n,0\leq j<p,0\leq l_{k}<p, k=1,\cdots,m-1\}$$ forms a $\mathds{Z}$-basis of $\mathds{Z}[y,z,w_{1},\cdots,w_{n-1}]/I$.

Now we can calculate that the ranks of two sides of the
 epimorphism $\bar{\phi}$ in (\ref{isom}) are both $np^m=nd$.
  Actually, the rank of the left side is $\mid B\mid = np^m$ since $B$ is a $\mathds{Z}$-basis of $\mathds{Z}[y,z,w_{1},\cdots,w_{n-1}]/I$; and the rank of the right side is $nd$ by the definition of the representation ring $r(KZ_{n}/J^{d})$. Finally the conclusion follows by the fact that two free abelian groups are isomorphic if and only if both of them have the same rank.
\end{proof}

Finally, we will give the following example to illustrate our result precisely.

\begin{Ex}\label{ex7.4}
{\rm Let $n=10$, then the representation ring $r(KZ_{10}/J^{d})$ is shown as follows:

Case $1$. When {\rm char}$~K_{1}=2$, then

$r(K_{1}Z_{10}/J^{2})\cong \mathds{Z}[y,z]/(y^{10}-1,(z-y-1)z)$.

$r(K_{1}Z_{10}/J^{4})\cong
\mathds{Z}[y,z,w_{1}]/(y^{10}-1,(z-y-1)z,(w_{1}-y)(w_{1}+y-z-yz))$.

$r(K_{1}Z_{10}/J^{8})\cong
\mathds{Z}[y,z,w_{1},w_{2}]/(y^{10}-1,(z-y-1)z,(w_{1}-y)(w_{1}+y-z-yz),w_{2}^{2}-y^{4}-z(w_{1}-y)(1+y)(w_{2}+2y^2-y^{2}z-yz))$.

Case $2$. When {\rm char}$~K_{2}=3$, then

$r(K_{2}Z_{10}/J^{3})\cong \mathds{Z}[y,z]/(y^{10}-1,(z-y-1)(z^{2}-y))$.

$r(K_{2}Z_{10}/J^{9})\cong
\mathds{Z}[y,z,w_{1}]/(y^{10}-1,(z-y-1)(z^{2}-y),(w_{1}+y+y^{2}+yz-z^{2}-yz^{2})(w_{1}^{2}-y^{3}-2yzw_{1}+y^{2}z^{2}))$.

Case $3$. When {\rm char}$~K_{3}=5$, then

$r(K_{3}Z_{10}/J^{5})\cong \mathds{Z}[y,z]/(y^{10}-1,(z-y-1)(z^{4}-3yz^{2}+y^{2}))$.

Case $4$. When {\rm char}$~K_{4}=7$, then

$r(K_{4}Z_{10}/J^{7})\cong
\mathds{Z}[y,z]/(y^{10}-1,(z-y-1)(z^{6}-5yz^{4}+6y^{2}z^{2}-y^{3}))$.
 }
\end{Ex}

At the end of this section, we apply the polynomial characterization in
Theorem \ref{thm7.5} to give some discussions about the isomorphic problem
on representation rings, that is, what conditions should be satisfied of two
Nakayama truncated algebras $KZ_{n_{1}}/J^{d_{1}}$ and
$KZ_{n_{2}}/J^{d_{2}}$ such that $r(KZ_{n_{1}}/J^{d_{1}})\cong
r(KZ_{n_{2}}/J^{d_{2}})$ as rings?

In fact, we have to replace $r(KZ_{n}/J^{d})$ by its {\em complexified representation algebra} $R(KZ_{n}/J^{d})=\mathds{C}\otimes_{\mathds{Z}} r(KZ_{n}/J^{d})$ and then discuss the conditions for $R(KZ_{n_{1}}/J^{d_{1}})\cong R(KZ_{n_{2}}/J^{d_{2}})$ holds.

Recall that for a field $K$, a positive integer $t$ and the polynomial algebra $K[x_{1},\cdots,x_{t}]$, let $V$ be a subset of $K^t$,
then denote $I(V)=\{f\in K[x_{1},\cdots,x_{t}]|f(v)=0$ for all $v \in V\}$ which is an ideal of $K[x_{1},\cdots,x_{t}]$, called as the {\em corresponding ideal} of $V$; conversely for an ideal $I$ of $K[x_{1},\cdots,x_{t}]$, denote $V(I)=\{v\in K^{t}|f(v)=0$ for all $f \in I\}\subset K^t$, which is called an {\em algebraic set} of $K^{t}$ corresponding to $I$.

Moreover, a subset $V\subset K^t$ is called an \emph{affine variety} if it is an irreducible closed subset of $K^{t}$ under the meaning of Zariski topology; and an ideal $I$ of $K[x_{1},\cdots,x_{t}]$ is called an \emph{radical ideal} if $\sqrt{I}=I$ for  $\sqrt{I}=\{f|f^{s}\in I$ for some integer $s>0\}$.

It is well-known that there exists a one-to-one correspondence between affine varieties $V$ of $K^{t}$ and radical ideals $I$ of $K[x_{1},\cdots,x_{t}]$.

Furthermore, we will denote $K[V]$ to be the \emph{homogeneous coordinate ring} of the affine variety $V$. For the details, see \cite{[15]},\cite{[23]}.

\begin{Lem}{\rm (Corollary 4.5, \cite{[23]})}\label{lem7.5}
{\rm For two affine varieties $V,W$ in $K^{t}$, a polynomial map $f:V\rightarrow W$ is an isomorphism if and only if  the dual map $f^{*}:K[W]\rightarrow K[V]$ is an isomorphism.}
\end{Lem}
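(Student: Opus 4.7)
The plan is to establish the standard contravariant equivalence between the category of affine varieties over $K$ and a suitable category of finitely generated reduced $K$-algebras; the lemma then becomes a formal categorical consequence that isomorphisms on one side correspond to isomorphisms on the other.

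First I would observe that the assignment $f \mapsto f^{*}$ is contravariantly functorial: given polynomial maps $V \xrightarrow{f} W \xrightarrow{h} U$, one has $(h \circ f)^{*} = f^{*} \circ h^{*}$ and $(\operatorname{id}_{V})^{*} = \operatorname{id}_{K[V]}$, both immediate from the definition $f^{*}(g) = g \circ f$. This yields at once the ``only if'' direction: if $f$ admits a polynomial inverse $g: W \to V$, then $g^{*}$ is a two-sided inverse of $f^{*}$ in the category of $K$-algebras.

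For the converse, write $V \subseteq K^{t}$ and $W \subseteq K^{s}$, so that $K[V] = K[x_{1},\ldots,x_{t}]/I(V)$ and $K[W] = K[y_{1},\ldots,y_{s}]/I(W)$. Given that $f^{*}: K[W] \to K[V]$ is a $K$-algebra isomorphism, set $\bar{g}_{i} := (f^{*})^{-1}(\bar{x}_{i}) \in K[W]$ and lift each to a representative $g_{i} \in K[y_{1},\ldots,y_{s}]$, obtaining a polynomial map $g = (g_{1},\ldots,g_{t}): K^{s} \to K^{t}$. The essential step is to verify that $g$ restricts to a polynomial map $W \to V$: for any $h \in I(V)$, the element $h(\bar{g}_{1},\ldots,\bar{g}_{t}) = (f^{*})^{-1}(h(\bar{x}_{1},\ldots,\bar{x}_{t})) = (f^{*})^{-1}(0) = 0$ in $K[W]$, which means $h(g_{1},\ldots,g_{t}) \in I(W)$, so $h$ vanishes on $g(W)$.

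Finally, by construction $g^{*} = (f^{*})^{-1}$, and so $f^{*} \circ g^{*} = \operatorname{id}_{K[V]}$ and $g^{*} \circ f^{*} = \operatorname{id}_{K[W]}$. Using the bijective correspondence between polynomial maps $V \to W$ and $K$-algebra homomorphisms $K[W] \to K[V]$ (which rests on $K$ being algebraically closed via the Nullstellensatz, ensuring that distinct polynomial maps induce distinct algebra maps), the identities $(g \circ f)^{*} = f^{*} \circ g^{*} = \operatorname{id}_{K[V]} = (\operatorname{id}_{V})^{*}$ and symmetrically for $f \circ g$ force $g \circ f = \operatorname{id}_{V}$ and $f \circ g = \operatorname{id}_{W}$. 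The main subtle point is the faithfulness of $f \mapsto f^{*}$, which is the nontrivial algebro-geometric input underlying the otherwise purely categorical argument.
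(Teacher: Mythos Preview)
The paper does not prove this lemma at all: it is quoted directly as Corollary~4.5 from Reid's \emph{Undergraduate Algebraic Geometry} \cite{[23]}, and the only accompanying text is the one-line reminder of the definitions $K[V]=\{f:V\to K\mid f\text{ is a polynomial function}\}$ and $f^{*}(g)=g\circ f$. So there is no ``paper's own proof'' to compare against.

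Your argument is the standard one and is correct. The functoriality gives the easy direction immediately; for the converse you correctly build the candidate inverse $g$ from $(f^{*})^{-1}$ applied to the coordinate functions, check that $g(W)\subseteq V$ via the Nullstellensatz, and then invoke faithfulness of the functor $f\mapsto f^{*}$ to upgrade $(g\circ f)^{*}=(\mathrm{id}_{V})^{*}$ to $g\circ f=\mathrm{id}_{V}$. One small remark: faithfulness here does not actually require the Nullstellensatz or algebraic closedness---if two polynomial maps $f_{1},f_{2}:V\to W$ satisfy $f_{1}^{*}=f_{2}^{*}$, then applying both sides to the coordinate functions $\bar{y}_{j}\in K[W]$ shows that the components of $f_{1}$ and $f_{2}$ agree as elements of $K[V]$, hence as functions on $V$. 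The Nullstellensatz is needed rather for \emph{fullness} of the functor (that every $K$-algebra map $K[W]\to K[V]$ arises from a polynomial map), which you implicitly use when you assert that the map $g$ you constructed really has $g^{*}=(f^{*})^{-1}$; but your explicit construction of $g$ from coordinate representatives already handles this directly, so the argument goes through.
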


We know $K[V]\stackrel{{\rm def.}}{=}\{f:V\rightarrow K|f$ is a polynomial function$\}$; for a polynomial function $g\in K[W]$, $f^*$ is defined by $f^{*}(g)=g\circ f$.
  For two finite sets $V,W$ in $K^{t}$, there exists a polynomial isomorphism between $V$ and $W$ if and only if $|V|=|W|$. Then we have the following:
\begin{Cor}\label{Cor8.2}
{\rm  Let {\rm char}$K=p$ and $n_i\geq d_i=p^{m_i}, i=1,2$
satisfying $n_{1} d_{1}=n_{2} d_{2}$ and $I,I'$ be the ideals of
$\mathds Z[y,z,w_1,\cdots,w_{m_1-1}]$ and $\mathds
Z[y,z,w_1,\cdots,w_{m_2-1}]$ respectively given in Theorem
\ref{thm7.5} in the process of polynomial characterization of the
representation rings $r(KZ_{n_{1}}/J^{d_{1}})$,
$r(KZ_{n_{2}}/J^{d_{2}})$. If furthermore both $I$ and $I'$ are
radical ideals, then their complexified representation algebras are
isomorphic, that is, $R(KZ_{n_{1}}/J^{d_{1}})\cong
R(KZ_{n_{2}}/J^{d_{2}})$.}
\end{Cor}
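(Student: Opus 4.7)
The plan is to translate the isomorphism question into one about coordinate rings of zero-dimensional algebraic sets in complex affine space. By Theorem \ref{thm7.5}, we have $r(KZ_{n_i}/J^{d_i}) \cong \mathds{Z}[y,z,w_1,\ldots,w_{m_i-1}]/I_i$ for $i=1,2$, where $I_1 = I$ and $I_2 = I'$. Complexifying gives
$$R(KZ_{n_i}/J^{d_i}) = \mathds{C}\otimes_{\mathds{Z}} r(KZ_{n_i}/J^{d_i}) \cong \mathds{C}[y,z,w_1,\ldots,w_{m_i-1}]/(I_i)_{\mathds{C}},$$
where $(I_i)_{\mathds{C}}$ denotes the ideal of the complex polynomial ring generated by the explicit generators $g_0,g_1,\ldots,g_{m_i}$ of $I_i$. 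A preliminary step is to verify that if $I_i$ is a radical ideal over $\mathds{Z}$, then so is its extension $(I_i)_{\mathds{C}}$ over $\mathds{C}$; this will follow because the $\mathds{Z}$-basis $B_i$ produced in Theorem \ref{thm7.5} freely lifts to a $\mathds{C}$-basis, so the quotient is a reduced finite-dimensional $\mathds{C}$-algebra.

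Next I would invoke Hilbert's Nullstellensatz to identify the radical ideal $(I_i)_{\mathds{C}}$ with the vanishing ideal of an algebraic set $V_i := V((I_i)_{\mathds{C}}) \subset \mathds{C}^{m_i+1}$, yielding a ring isomorphism
$$\mathds{C}[V_i] \cong \mathds{C}[y,z,w_1,\ldots,w_{m_i-1}]/(I_i)_{\mathds{C}}.$$
Because Theorem \ref{thm7.5} furnishes the $\mathds{Z}$-basis $B_i$ of cardinality $n_i d_i$, the coordinate ring $\mathds{C}[V_i]$ has $\mathds{C}$-dimension $n_i d_i$; since $(I_i)_{\mathds{C}}$ is radical and zero-dimensional, this dimension equals the number of points, so $|V_i| = n_i d_i$.

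Combining this with the hypothesis $n_1 d_1 = n_2 d_2$, the finite sets $V_1$ and $V_2$ have equal cardinality. The remark immediately following Lemma \ref{lem7.5} asserts that any two finite subsets of a common affine space with the same cardinality are polynomially isomorphic as algebraic sets (after embedding $V_1, V_2$ into a common $\mathds{C}^N$ via the standard inclusions, if needed). Applying Lemma \ref{lem7.5} to the resulting polynomial isomorphism $V_1 \to V_2$, the dual map is a ring isomorphism $\mathds{C}[V_2] \xrightarrow{\sim} \mathds{C}[V_1]$, and composing with the two Nullstellensatz identifications produces the desired isomorphism $R(KZ_{n_1}/J^{d_1}) \cong R(KZ_{n_2}/J^{d_2})$.

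The main obstacle I anticipate is twofold. First, one must carefully propagate the radicality hypothesis from $\mathds{Z}[\mathbf{y}]$ to $\mathds{C}[\mathbf{y}]$; this hinges on the fact that the explicit monomial basis $B_i$ remains a $\mathds C$-basis after base change, so no nilpotents are introduced. Second, the $V_i$ are reducible (disjoint unions of points) whereas Lemma \ref{lem7.5} is stated for irreducible affine varieties, so one must either extend it componentwise to algebraic sets or use directly that a bijection between finite subsets of affine space is automatically a polynomial isomorphism. Once these technical points are reconciled, the argument reduces to a chain of standard equivalences dictated by the Nullstellensatz and the bookkeeping $\dim_{\mathds C} R(KZ_{n}/J^{d}) = nd$.
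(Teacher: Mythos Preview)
Your proposal is correct and follows essentially the same route as the paper's own proof: both arguments identify $R(KZ_{n_i}/J^{d_i})$ with the coordinate ring of a finite algebraic set of $n_i d_i$ points over $\mathds{C}$, invoke the equality $n_1 d_1 = n_2 d_2$ to get a polynomial bijection between the two point-sets, and then apply Lemma~\ref{lem7.5} to conclude. The paper counts the points by citing the leading terms of the $g_i$ (essentially a Gr\"obner-basis argument) rather than via $\dim_{\mathds{C}}$ of the quotient, and it silently glosses over precisely the two technicalities you flagged (passage of radicality to $\mathds{C}$ and the reducibility of the $V_i$), so your version is in fact the more careful of the two.
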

\begin{proof}
Denote $N=n_{1}d_{1}=n_{2}d_{2}$, then the algebraic sets $V(I),
V(I')$ have both $N$ discrete points, i.e. $N=|V(I)|=|V(I')|$. In
fact, by the Grobner basis theory, or in brief, since the degree of
$g_{0},g_{1},g_{2},\cdots,g_{m_{1}}$ in Theorem \ref{thm7.5} are
$n_{1},p,p,\cdots,p$ respectively under the order
$y<z<w_{1}<w_{2}<\cdots<w_{m_{1}-1}$, it follows that the algebraic
set $V(I)$ has $n_{1}\cdot p \cdot p \cdots p=n_{1}\cdot d_{1}=N$
discrete points. So is $V(I')$, too.

It follows that  $N=|V(\sqrt{I})|=|V(I)|=|V(I')|=|V(\sqrt{I'})|<\infty$, and then there exists a polynomial isomorphism $f$ from $V(\sqrt{I})$ to  $V(\sqrt{I'})$, which means by Lemma \ref{lem7.5} there exists a $K$-algebra isomorphism $f^*$, as the dual of $f$, from the homogeneous coordinate ring $\mathds{C}[V(\sqrt{I'})]$ to $\mathds{C}[V(\sqrt{I})]$.

Since both $I$ and $I'$ are radical ideals, we have
$\mathds{C}[V(\sqrt{I})]\cong \mathds{C}[y,z,w_1,\cdots,w_{m_1-1}]/I(V(\sqrt{I}))=\mathds{C}[y,z,w_1,\cdots,w_{m_1-1}]/\sqrt{I}= \mathds{C}[y,z,w_1,\cdots,w_{m_1-1}]/I\cong R(KZ_{n_{1}}/J^{d_{1}})$, similarly the isomorphism $\mathds{C}[V(\sqrt{I'})]\cong R(KZ_{n_{2}}/J^{d_{2}})$ holds.
 Hence, we get that $R(KZ_{n_{1}}/J^{d_{1}})\cong R(KZ_{n_{2}}/J^{d_{2}})$, which completes the proof.
\end{proof}

From this result, it is possible for one to find some different Nakayama truncated algebras whose complexified representation algebras are isomorphic together each other.

However, in general, the ideal $I$ in Theorem \ref{thm7.5} is not always an radical ideal. For instance, if we choose $n=10, d=p=2$, then in Example \ref{ex7.4} we have obtained that $r(K_{1}Z_{10}/J^{2})\cong \mathds{Z}[y,z]/(y^{10}-1,(z-y-1)z)$, that is, the ideal $I=(y^{10}-1,(z-y-1)z)$; but, through calculating by using the Maple $17$ software, it is easy to see that $\sqrt{I}=(y^{10}-1,(z-y-1)z,\frac{y^{10}-1}{y+1}z)\supsetneqq I$, which says that $I$ is not radical ideal.
\bigskip

\section{Shift rings and derived rings of Nakayama truncated algebras}

\subsection{{ The shift ring $sh(KZ_{n}/J^{d})$ }}

For the Nakayama truncated algebra $H=KZ_{n}/J^{d}$ with ${\rm char}K=p, n\geq d=p^m, m>0$, recall that the indecomposable modules
 $M(i,\overline{j})=P_{\overline{j}}/{\rm rad}^{i}P_{\overline{j}},0\leq i\leq n-1,1\leq j\leq d$, $s\in \mathds{Z}$, where $P_{\overline{j}}$
 is the indecomposable projective $H$-module at the vertex $\overline{j}$.
In order to calculate the shift ring $sh(KZ_{n}/J^{d})$, it suffices
to compute the structure constants $k_{i,j,i',j'}^{i'',j''}$ in the
decomposition $$M(i,\overline{j})^{\bullet}[s]\otimes
M(i',\overline{j'})^{\bullet}[s']\cong \sum \limits_{0\leq i''\leq
n-1, 1\leq j''\leq d}k_{i,j,i',j'}^{i'',j''}M(i'',\overline{j''})^{\bullet}[s+s']$$ for
$0\leq i,i',i'' \leq n-1,1\leq j,j',j''\leq d, s,s'\in \mathds{Z}$.
However, by the formula (\ref{addshift}) in Section $2$, these
$k_{i,j,i',j'}^{i'',j''}$ are just the structure constants in the
decomposition $$M(i,\overline{j})\otimes M(i',\overline{j'})\cong
\sum \limits_{0\leq i''\leq n-1, 1\leq j''\leq d}
k_{i,j,i',j'}^{i'',j''}M(i'',\overline{j''})$$ for $0\leq
i,i',i''\leq n-1,1\leq j,j',j''\leq d$ in the module category, which
we have already known by Lemma \ref{lem6.1} and Lemma \ref{thm7.1}.
Hence we can give the generators and relations of the shift ring
$r^{sh}(KZ_{n}/J^{d})$ as follows.

\begin{Prop}\label{prop10.3}
{\rm Let ${\rm char}K=p, n\geq d=p^m, m>0$. Then

(i)~ The shift ring $sh(KZ_{n}/J^{d})$ is generated by
$[M(1,\overline{1})^{\bullet}[r]],[M(2,\overline{0})^{\bullet}[r]]$
and $[M(p^{l}+1,\overline{0})^{\bullet}[r]]$, where $r\in
\mathds{Z}, 1\leq l\leq m-1$.

(ii)~ The relations in shift ring $sh(KZ_{n}/J^{d})$ can be determined by the following formulas:
\[  \left\{
\begin{array}{ll}
M(i,\overline{j})^{\bullet}[r]\otimes M(1,\overline{0})^{\bullet}[s]\cong M(1,\overline{0})^{\bullet}[s]\otimes M(i,\overline{j})^{\bullet}[r] \cong M(i,\overline{j})^{\bullet}[r+s], \\
M(1,\overline{1})^{\bullet}[r]^{\otimes n}\cong M(1,\overline{0})^{\bullet}[n r], \\
M(2,\overline{0})^{\bullet}[r]\otimes M(t,\overline{0})^{\bullet}[s]\cong  M(t+1,\overline{0})^{\bullet}[r+s]\oplus M(t-1,\overline{1})^{\bullet}[r+s] ~~~~\forall t\geq 2, p\nmid t, \\
M(2,\overline{0})^{\bullet}[r]\otimes M(t,\overline{0})^{\bullet}[s]\cong M(t,\overline{0})^{\bullet}[r+s]\oplus M(t,\overline{1})^{\bullet}[r+s] $~~for ~all~$ t>0, p|t, \\
\end{array}
\right.
\]
$M(p^{l}+1,\overline{0})^{\bullet}[r]\otimes
M(kp^{l}+1,\overline{0})^{\bullet}[s] \cong \left\{\begin{array}{ll}
W_{1}[r+s], &\mbox{if~ $k\equiv -1({\rm mod}~p)$} \\
W_{2}[r+s], &\mbox{if~ $k\equiv 0({\rm mod}~p)$} \\
W_{3}[r+s],  &\mbox{otherwise} \\
\end{array}\right.$\\
where $r,s\in \mathds{Z}$ and $W_{1},W_{2},W_{3}$ are defined in Lemma \ref{thm7.1}.

(iii) Let $I'$ be the ideal of $\mathds{Z}[y[r],z[r],w_{l}[r]]_{r\in
\mathds{Z}, 1\leq l\leq m-1}$ generated by the relations which are
given from (ii) through replacing
$[M(1,\overline{1})^{\bullet}[r]],[M(2,\overline{0})^{\bullet}[r]],[M(p^{l}+1,\overline{0})^{\bullet}[r]]$
by the indeterminates $y[r],z[r],w_{l}[r], r\in \mathds{Z}, 1\leq
l\leq m-1$, respectively. Then there is the following ring
isomorphism:  $$sh(KZ_{n}/J^{d})\cong
\mathds{Z}[y[r],z[r],w_{l}[r]]_{r\in \mathds{Z}, 1\leq l\leq
m-1}/I'.$$ }
\end{Prop}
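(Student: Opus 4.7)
For part (i), the plan is to reduce the claim to Theorem \ref{thm6.3} via the key identity (\ref{addshift}). Given an arbitrary indecomposable shifted complex $M(i,\overline{j})^{\bullet}[r]$, Theorem \ref{thm6.3} provides a polynomial expression
$[M(i,\overline{j})]=F([M(1,\overline{1})],[M(2,\overline{0})],[M(p^{l}+1,\overline{0})])$
in $r(KZ_{n}/J^{d})$. Writing $F$ as a sum of monomials and using (\ref{addshift}) repeatedly, one can take one distinguished factor in each monomial and decorate it with the shift $[r]$ (leaving all other factors at shift $0$), so that the tensor products reassemble into the same $M(i,\overline{j})^{\bullet}[r]$ on the $\mathbb{Z}$-graded level. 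Thus every generator of $sh(KZ_{n}/J^{d})$ in the sense of Theorem \ref{thm10.2}(i) is already expressible in the smaller set $\{[M(1,\overline{1})^{\bullet}[r]],[M(2,\overline{0})^{\bullet}[r]],[M(p^{l}+1,\overline{0})^{\bullet}[r]]\}_{r\in\mathds{Z},\;1\le l\le m-1}$.

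For part (ii), each of the listed identities is simply the module-theoretic decomposition from Lemma \ref{lem6.1} and Lemma \ref{thm7.1}, transported to $D^{sh}(H)$ via (\ref{addshift}). Concretely, if $M\otimes N\cong \bigoplus_{k}L_{k}$ in $_Hmod$, then $M^{\bullet}[r]\otimes N^{\bullet}[s]\cong (M\otimes N)^{\bullet}[r+s]\cong \bigoplus_{k}L_{k}^{\bullet}[r+s]$. So every relation in the representation ring immediately lifts to the corresponding relation in the shift ring, with the shift index added. This step is essentially bookkeeping.

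For part (iii), I would proceed as in the proof of Theorem \ref{thm10.2}(i),(iii). Define a ring homomorphism
$$\psi:\mathds{Z}[y[r],z[r],w_{l}[r]]_{r\in\mathds{Z},\,1\le l\le m-1}\longrightarrow sh(KZ_{n}/J^{d})$$
by $\psi(y[r])=[M(1,\overline{1})^{\bullet}[r]]$, $\psi(z[r])=[M(2,\overline{0})^{\bullet}[r]]$, $\psi(w_{l}[r])=[M(p^{l}+1,\overline{0})^{\bullet}[r]]$, and $\psi(1)=[M(1,\overline{0})^{\bullet}[0]]$. By part (ii) the ideal $I'$ sits inside $\ker\psi$, and by part (i) the induced map $\bar\psi$ is surjective. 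For injectivity, I would exploit the $\mathds{Z}$-grading of $sh(KZ_{n}/J^{d})$ from Fact \ref{fact1}(iii)-(v): declare $\deg y[r]=\deg z[r]=\deg w_{l}[r]=r$, so both sides become $\mathds{Z}$-graded and $\bar\psi$ is a graded map. Since the defining polynomials of $I'$ are homogeneous in this grading, $I'$ is a graded ideal; the degree-$0$ component of $\mathds{Z}[y[0],z[0],w_{l}[0]]/(I'\cap \text{deg }0)$ is exactly $\mathds{Z}[y,z,w_{1},\dots,w_{m-1}]/I\cong r(KZ_{n}/J^{d})$ by Theorem \ref{thm7.5}, and Fact \ref{fact1}(iv) together with (\ref{addshift}) shows each degree-$n$ component on the left is obtained from the degree-$0$ component by ``shifting one variable up by $n$''. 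Comparing ranks degree by degree then yields the isomorphism.

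The main obstacle is the bookkeeping in the final injectivity step: one must verify that the relations listed in $I'$ are sufficient to normalize an arbitrary element of the polynomial ring into a canonical form indexed by the $\mathds{Z}$-basis $\{[M(i,\overline{j})^{\bullet}[r]]\}$ of $sh(KZ_{n}/J^{d})$. The cleanest route is to combine the leading-term analysis of Lemma \ref{lem7.10} with the shift homogeneity, so that the reduction in each fixed degree $r$ becomes the reduction already carried out in Theorem \ref{thm7.5}; once this is in place the rank comparison is immediate from Fact \ref{fact1}.
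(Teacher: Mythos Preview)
Your proposal is correct and follows essentially the same route as the paper: part (i) is reduced to Theorem \ref{thm6.3} via (\ref{addshift}), part (ii) is Lemma \ref{lem6.1} and Lemma \ref{thm7.1} shifted by (\ref{addshift}), and part (iii) is handled by defining exactly the homomorphism $\phi$ (your $\psi$) and invoking Theorem \ref{thm7.5}. The paper's own proof of (iii) is extremely terse---it simply says ``the remaining is similar''---so your graded rank-comparison argument using Fact \ref{fact1}(iii)--(v) and Lemma \ref{lem7.10} is a legitimate and more explicit way to fill in what the paper leaves implicit.
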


\begin{proof}
 (i) follows from Theorem \ref{thm6.3} and (ii) from Lemma \ref{lem6.1} and Lemma \ref{thm7.1}. Additionally, (iii) follows from Theorem \ref{thm7.5}, concretely, we can define a ring homomorphism $$\phi:\mathds{Z}[y[r],z[r],w_{l}[r]]_{r\in \mathds{Z}, 1\leq l\leq m-1}\rightarrow sh(KZ_{n}/J^{d})$$ by $\phi(y[r])=[M(1,\overline{1})^{\bullet}[r]]$, $\phi(z[r])=[M(2,\overline{0})^{\bullet}[r]]$ and $\phi(w_{l}[r])=[M(p^{l}+1,\overline{0})^{\bullet}[r]]$, the remaining is similar.
\end{proof}

\subsection{{The derived ring $dr(KZ_{n}/J^{2})$} }

So far, it is difficult to obtain all indecomposable objects in the
bounded derived category $D^{b}(H)$, in general, for an arbitrary
representation-finite $K$-Hopf algebra $H$, even for the Nakayama
truncated algebra $H=KZ_{n}/J^{d}$.

When $d=2$, however, we can list all the indecomposable complexes of
$D^{b}(H)$ by using the method introduced in \cite{[3]} since
$H=KZ_{n}/J^{2}$ is a gentle algebra, or more generally, an
elementary algebra with 2-nilpotent radical. All the notations and
terminologies can be found in \cite{[3]}.

\subsubsection{ Construction of the indecomposable objects in $D^{b}(KZ_{n}/J^{2})$.}

\begin{Lem}
{\rm The set of indecomposable objects of $D^{b}(KZ_{n}/J^{2})$ is
given as
$${\rm ind}(D^{b}(KZ_{n}/J^{2}))=\{P^{\bullet}(i,j)[l]~ |~ -\infty \leq j\leq i<+\infty, l\in
\mathds{Z}\},$$
 where \[ P^{\bullet}(i,j)_{a}= \left\{
\begin{array}{ll}
P_{\overline{a}},      &\mbox{if $i\leq a\leq j$, }  \\
0,      &\mbox{otherwise,}
\end{array}
\right.\]

\[ P^{\bullet}(i,j)_{a+1\rightarrow a}= \left\{
\begin{array}{ll}
P_{\overline{a+1}}\rightarrow P_{\overline{a}},     &\mbox{if $i\leq a\leq j$,}  \\
0,      &\mbox{otherwise.}
\end{array}
\right.\] and the morphism $P_{\overline{a+1}}\rightarrow
P_{\overline{a}}$ is defined by the action of arrows. }
\end{Lem}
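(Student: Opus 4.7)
The plan is to recognize $KZ_n/J^2$ as a special instance of the class of algebras treated in \cite{[3]} and then specialize their classification of indecomposable objects of $D^b$. Since $J^2=0$, the algebra is elementary with radical square zero; in fact, because the cyclic quiver $Z_n$ has exactly one arrow in and one arrow out at each vertex and the only relations are monomial of length two, $KZ_n/J^2$ is a gentle algebra in the sense of \cite{[3]}. In particular, for each $\overline{a}\in\mathds{Z}/n\mathds{Z}$, the indecomposable projective $P_{\overline{a}}$ is two-dimensional with top $S_{\overline{a}}$ and socle $S_{\overline{a+1}}$, so there is, up to scalar, a unique non-zero morphism $P_{\overline{a+1}}\to P_{\overline{a}}$, namely the socle inclusion composed with the projective cover.

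First, I would invoke from \cite{[3]} the bijection between iso-classes of indecomposable complexes in $D^b(A)$ (for $A$ in this class) and shifted \emph{string} data in the projective quiver of $A$. For $A=KZ_n/J^2$, the projective quiver has objects $\{P_{\overline{a}}\}_{\overline{a}\in\mathds{Z}/n\mathds{Z}}$ together with the canonical irreducible maps $P_{\overline{a+1}}\to P_{\overline{a}}$ described above. Because this quiver is \emph{linear} at every vertex (no branching), every string is a finite consecutive block of projectives, and the associated string complex is precisely the $P^{\bullet}(i,j)$ of the statement, with differentials given by the arrow action as displayed.

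Next, I would verify three items to make the identification clean: (a) each $P^{\bullet}(i,j)[l]$ is indecomposable, which follows from a direct computation of $\mathrm{End}_{D^b}(P^{\bullet}(i,j))\cong K$ (any chain self-map is, up to homotopy, determined by a single scalar because the top-to-socle structure is one-dimensional at every step); (b) distinct triples $(i,j,l)$ yield non-isomorphic complexes, which is visible from the homological positions at which the complex is non-zero; (c) the family is exhaustive, which is exactly the content of the string classification in \cite{[3]} once one notes that linearity of the quiver rules out band objects and forces every string to be an interval.

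The main obstacle is essentially bookkeeping rather than a conceptual hurdle: one must carefully translate the general string combinatorics of \cite{[3]} into the cyclic setting of $Z_n$, and in particular check that the indices $(i,j,l)$ used here parametrize strings faithfully, with the shift functor $[1]$ accounting for the $\mathds{Z}$-parameter $l$. One should also be attentive to the convention for the range of indices $j\leq a\leq i$ (versus $i\leq a\leq j$) in the statement, so that the displayed complex genuinely matches the string prescribed by \cite{[3]}; once these notational conventions are pinned down, the rest of the proof is a routine specialization.
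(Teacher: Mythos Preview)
Your approach is essentially correct but differs from the paper's route. The paper does not argue via string combinatorics on the projective category of $KZ_n/J^2$; instead it applies the covering machinery of Bautista--Liu \cite{[3]} literally. The minimal graded cover $\widetilde{Q}$ of $Z_n$ is identified with the linear quiver $A_\infty$, and Theorem~3.11 of \cite{[3]} furnishes a functor $\mathscr{F}:{\rm rep}^{-,p}(A_\infty)\to D^b(KZ_n/J^2)$ which preserves indecomposability and, together with shifts, reaches every indecomposable object. Since $A_\infty$ with linear orientation has precisely the interval representations $L(i,j)$ as its indecomposables, the paper simply computes $\mathscr{F}(L(i+1,j+1))=P^{\bullet}(i,j)$ and concludes.

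What you outline is closer in spirit to the Bekkert--Merklen classification \cite{[4]} for gentle algebras rather than to the content of \cite{[3]}, so your citations are crossed. More substantively, your exclusion of band objects needs sharper justification than ``linear at every vertex (no branching)'': a cyclic quiver is exactly the kind of shape that can, in principle, produce bands. The correct reason here is that at each $P_{\overline a}$ the unique incoming radical map and the unique outgoing radical map leave no room for a direction reversal in a homotopy string, so every admissible string is monotone in homological degree and therefore cannot close up into a band. Once that is said, your argument goes through. The trade-off is that the paper's proof is a one-line specialization of a structural theorem after recognizing the cover as $A_\infty$, whereas your route avoids the covering black box at the price of a hands-on string analysis and an explicit endomorphism-ring check.
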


\begin{proof}
Our proof is dependent on Theorem 3.11 in \cite{[3]}, where the
composition of arrows is from left to right, which is different with
the composition in this paper. So, in fact, we will use here the
dual conclusion for the converse composition, that naturally follows
from that in \cite{[3]}.

Firstly, it is easy to see that the minimal grading covering
$\widetilde{Q}$ of $Q$ is just the infinite Dynkin graph
$A_{\infty}$ with  the universal covering map sends $m$ to $m$(under
the relation of modulo $n$), see Figure $7$. We may further denote
the vertices of $A_{\infty}$ as $\cdots -2, -1, 0, 1, 2, \cdots$.

\begin{figure}[h] \centering
  \includegraphics*[135,565][421,708]{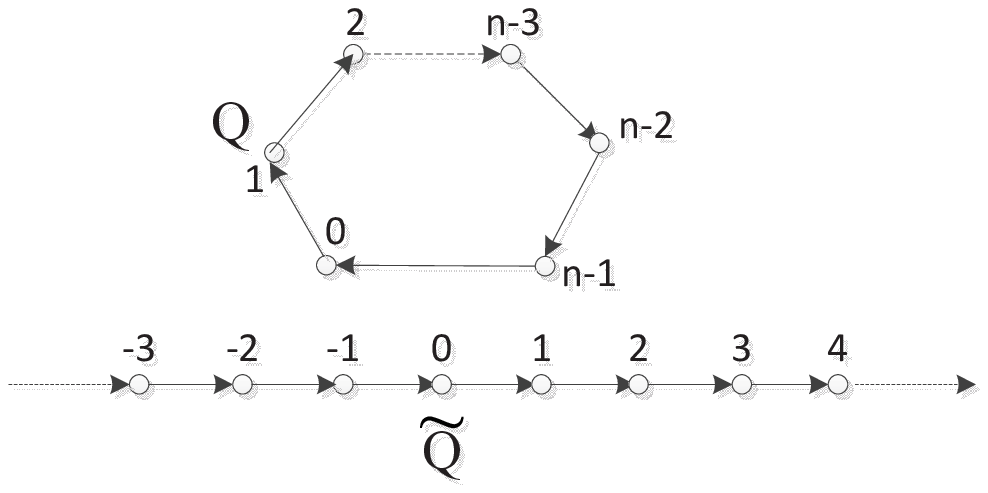}

 {\rm Figure 7 }
\end{figure}

Since $KQ/J^{2}$ is an elementary algebra with 2-nilpotent radical,
by Theorem 3.11 in \cite{[3]} there is a functor $\mathscr{F}: {\rm
rep}^{-,p}(\widetilde{Q})\rightarrow D^{b}(KQ/J^{2})$ which
preserves isomorphism classes and indecomposability, where ${\rm
rep}^{-,p}(\widetilde{Q})$ denotes the fully subcategory of ${\rm
rep}(\widetilde{Q})$ consisting of the bounded-above truncated
injective representations. Moreover, by Lemma 3.4 in \cite{[3]},
there exists an indecomposable object $N$ in ${\rm
rep}^{-,p}(\widetilde{Q})$ and an integer $n$ such that $M \cong
\mathscr{F}(N)[n]$ for any indecomposable object in
$D^{b}(KZ_{n}/J^{2})$. Hence, in order to characterize the
indecomposable objects of $D^{b}(KZ_{n}/J^{2})$, it suffices to
compute the indecomposable objects of ${\rm
rep}^{-,p}(\widetilde{Q})$.

Secondly, as the quiver $\widetilde{Q}=A_{\infty}$ with linear
orientation, hence all the indecomposable bounded-above objects in
${\rm rep}^{-,p}(\widetilde{Q})$ are $\{L(i,j)\}_{-\infty \leq j\leq
i <+\infty }$, where $L(i,j)$ for any $i,j$ satisfies that {\[
L(i,j)_{a}= \left\{
\begin{array}{ll}
K,      &\mbox{$(j\leq a\leq i)$,}  \\
0,      &\mbox{otherwise,}
\end{array}
\right. L(i,j)_{a\rightarrow a+1}= \left\{
\begin{array}{ll}
id_{K},      &\mbox{$(j\leq a\leq i-1)$,}  \\
0,      &\mbox{otherwise.}
\end{array}
\right.\]}

Finally, we consider the images of $L(i,j)$ under the action of the
functor $\mathscr{F}$. By the definition of $L(i,j)$ above, we have
that $\mathscr{F}(L(i+1,j+1))=P^\bullet(i,j)$, where
$P^\bullet(i,j)$ are defined in this lemma. Because $P^\bullet(i,j)$
has bounded homology for any
 $-\infty\leq j\leq i<+\infty$, so each such $L(i,j)$ is included in ${\rm rep}^{-,p}(\widetilde{Q})$
by Proposition 3.8 in \cite{[3]}. Therefore, the set of all
indecomposable objects of $D^{b}(KZ_{n}/J^{2})$ is just the set
$\{P^{\bullet}(i,j)[l]\}_{-\infty\leq j\leq i<+\infty, l\in
\mathds{Z}}$ by Proposition 3.1 and Theorem 3.11 in \cite{[3]}.
\end{proof}

In particular, we have $P^{\bullet}(i, -\infty)=\cdots\rightarrow
0\rightarrow P_{\overline{i}}\rightarrow
P_{\overline{i-1}}\rightarrow \cdots$. Note that if we choose $j=i$,
then $\mathscr{F}(L(i,i))$ is the stalk complex $P_{\overline{i}}$
in the $i$-th position; and choose $j=-\infty$, then
$\mathscr{F}(L(i, -\infty))$ is isomorphic to the stalk complex
$S_{\overline{i+1}}$ in the $i$-th position, since
$S_{\overline{i+1}}[i]$ is quasi-isomorphic to $P^{\bullet}(i,
-\infty)$ in the complex category $Ch(H)$.

\subsubsection{Structure constants of the derived ring $dr(KZ_{n}/J^{2})$ }

The derived ring $dr(KZ_{n}/J^{2})$ is generated by all objects in
${\rm ind}(D^{b}(H))=\{(P^{\bullet}(i,j))[l]~ |~ -\infty\leq j\leq
i<+\infty,l\in \mathds{Z}\}$.  As a subring of $dr(KZ_{n}/J^{2})$,
the structure constants of $sh(KZ_{n}/J^{2})$ associated with the
stalk complexes are already known from that of $r(KZ_{n}/J^{2})$ in the last subsection, or
Section 2. So it remains to consider the structure constants
associated with the tensor product $P^{\bullet}(i',j'))[l]\otimes
P^{\bullet}(i,j))[l']$ where at least one of
$P^{\bullet}(i',j'))[l], P^{\bullet}(i,j))[l']$ is not a stalk
complex. For simplify, we may further assume that $l=l'=0$. In order
to this aim, we first calculate the homological groups of
$P^{\bullet}(i',j'))\otimes P^{\bullet}(i,j))$,

\begin{Prop}\label{prop10.5}
{\rm  For $j,j',s,s'\in \mathds{Z}$ and $-\infty<s'\leq s$, let
\begin{eqnarray*}
\mathcal L &   =   & P^{\bullet}(j'+s'-1,j')\otimes
P^{\bullet}(j+s-1,j).
 \end{eqnarray*}
  Then the homological groups $H_{m}(\mathcal L)$ of $\mathcal L$ for all $m\in
  \mathbb{Z}$ are listed as follows:
    \begin{equation}\label{formulaforR}
H_{m}(\mathcal L)= \left\{
\begin{array}{ll}
S_{\overline{j+j'+s+s'}},   &\mbox{if $m=j+j'+s+s'-2, s'\neq 1$;}  \\
S_{\overline{j+j'+s}},   &\mbox{if $m=j+j'+s-1, s\neq s', s'\neq 1$;}  \\
S_{\overline{j+j'+s'}},   &\mbox{if $m=j+j'+s'-1, s\neq s', s'\neq 1$;}  \\
S_{\overline{j+j'+s}}\oplus S_{\overline{j+j'+s'}},   &\mbox{if $m=j+j'+s-1, s=s'\neq 1$;}  \\
P_{\overline{j+j'+s}},   &\mbox{if $m=j+j'+s-1, s\neq s'=1$;}  \\
S_{\overline{j+j'}},   &\mbox{if $m=j+j', s'\neq 1$;}\\
P_{\overline{j+j'}},   &\mbox{if $m=j+j', s\neq s'=1$;}  \\
P_{\overline{j+j'+1}}\oplus P_{\overline{j+j'}},   &\mbox{if $m=0, s=s'=1$;}  \\
0,      &\mbox{otherwise.}
\end{array}
\right.\end{equation}
   }
\end{Prop}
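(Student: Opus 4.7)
The plan is to apply the K\"unneth formula over the ground field $K$ to reduce the computation of $H_{m}(\mathcal L)$ to tensor products of the homology groups of the two factor complexes, and then to evaluate each resulting tensor product of simples and projectives via the Elementary Lemma and Lemma \ref{lem6.1}(iii).

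First, for each $s \geq 1$ I would compute the homology of $P^{\bullet}(j+s-1,j)$ directly. Since $H = KZ_{n}/J^{2}$ with $\mathrm{char}\,K = p = 2$, each projective $P_{\overline{a}}$ is two-dimensional with top $S_{\overline{a}}$ and socle $S_{\overline{a+1}}$, and every internal differential $P_{\overline{a+1}} \to P_{\overline{a}}$ (induced by left multiplication by $\alpha_{\overline{a}}$) has both image and kernel equal to the socle. For $s \geq 2$ this gives $H_{j}(P^{\bullet}(j+s-1,j)) \cong S_{\overline{j}}$, $H_{j+s-1}(P^{\bullet}(j+s-1,j)) \cong S_{\overline{j+s}}$, and zero in between; for $s = 1$ the complex is a stalk at position $j$ with $H_{j} \cong P_{\overline{j}}$.

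Next, since $K$ is a field the K\"unneth formula yields a natural isomorphism
\[
H_{m}(\mathcal L) \;\cong\; \bigoplus_{u+v = m} H_{u}\bigl(P^{\bullet}(j'+s'-1,j')\bigr) \otimes_{K} H_{v}\bigl(P^{\bullet}(j+s-1,j)\bigr),
\]
and this isomorphism is $H$-equivariant because the total-complex differentials are $H$-module maps by Lemma \ref{lem2.2}, so the diagonal $H$-action on each tensor factor descends cleanly to cycles modulo boundaries. Substituting the previous step then produces at most four non-zero summands, one per choice of a homology position in each factor.

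Finally, each such summand has the form $X \otimes Y$ with $X, Y \in \{S_{\overline{c}}, P_{\overline{c}}\}$. The Elementary Lemma (Lemma \ref{lem3.3}) yields $S_{\overline{a}} \otimes S_{\overline{b}} \cong S_{\overline{a+b}}$ and $P_{\overline{a}} \otimes S_{\overline{b}} \cong P_{\overline{a+b}}$, while Lemma \ref{lem6.1}(iii) applied with $p = 2 = t$ yields $P_{\overline{a}} \otimes P_{\overline{b}} \cong P_{\overline{a+b}} \oplus P_{\overline{a+b+1}}$. Sorting by total degree then reproduces (\ref{formulaforR}). The main obstacle is case bookkeeping: when $s = s' \geq 2$ the two mixed contributions $H_{j'} \otimes H_{j+s-1}$ and $H_{j'+s'-1} \otimes H_{j}$ land in the same degree $m = j+j'+s-1$ and amalgamate to the double-summand row; when exactly one of $s, s'$ equals $1$ the corresponding factor contributes a projective rather than a simple, producing the $P_{\overline{\cdot}}$ entries in the middle rows; and when $s = s' = 1$ the only non-zero entry is the projective-by-projective tensor $P_{\overline{j'}} \otimes P_{\overline{j}} \cong P_{\overline{j+j'}} \oplus P_{\overline{j+j'+1}}$ at degree $j+j'$.
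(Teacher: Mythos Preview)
Your proof is correct and genuinely more direct than the paper's. The paper argues via the column-filtration spectral sequence of the double complex: it computes the $E^{1}$-page as vertical homology, then the $E^{2}$-page, checks that all further differentials vanish for positional reasons, and finally resolves the extension problem in the $s=s'$ case by appealing to ${\rm Ext}^{1}_{H}(S_{\overline{j+j'+s}},S_{\overline{j+j'+s}})=0$. Your K\"unneth argument bypasses all of this: over a field the cross-product map $[a]\otimes[b]\mapsto[a\otimes b]$ is already an isomorphism, and because it is visibly compatible with the diagonal $H$-action it delivers the $H$-module structure on $H_{m}(\mathcal L)$ immediately, with no filtration and no extension problem to solve. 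The paper's spectral sequence is in effect the K\"unneth spectral sequence, which over a field degenerates at $E^{2}$ to exactly the formula you wrote down; recognising this lets you skip the machinery. The only place your write-up could be tightened is the sentence justifying $H$-equivariance of the K\"unneth isomorphism: rather than invoking Lemma~\ref{lem2.2}, it is cleaner to note that for $h\in H$ with $\Delta(h)=\sum h'\otimes h''$ one has $h\cdot[a\otimes b]=\sum[h'a\otimes h''b]$, which is precisely the image of $h\cdot([a]\otimes[b])$ under the cross product.
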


\begin{proof}
For the notations about spectral sequence, refer to
\cite{[25]}, \S$5.6$. As there exists a double complex $C_{**}$ such
that $\mathcal {L}={\rm Tot}^{\oplus}(C_{**})$, we can calculate the homological groups $H_m(\mathcal L)$ through $C_{**}$ by using the method of spectral
sequences.

When $s\geq s'>1$, since $C_{**}$ is bounded, we obtain a
spectral sequence $\{^{I}E^{r}_{pq}\}$, which converges to
$H_{p+q}(\mathcal L)$ via the natural filtration by columns. More
precisely, the spectral sequence starts with
$\{^{I}E^{0}_{p,q}=C_{p,q}\}$ and the differentials
$d_{pq}^{0}=d_{pq}^{v}$, where $d_{pq}^{v} ~ (\forall p,q\in
\mathbb{Z})$ are the vertical differentials of $C_{**}$ and denote $d^{v}=\{d_{pq}^{v}\}_{p,q\in
\mathbb{Z}}$, $d^{0}=\{d_{pq}^{0}\}_{p,q\in
\mathbb{Z}}$. Now we obtain that
\[
{^{I}E^{1}_{p,q}}=H_{q}^{v}(C_{p,*})= \left\{
\begin{array}{ll}
P_{\overline{j'+p}}\otimes S_{\overline{j+s}},   &\mbox{if $j'\leq p\leq j'+s'-1$, $q=j+s-1$};  \\
P_{\overline{j'+p}}\otimes S_{\overline{j}},   &\mbox{if $j'\leq p\leq j'+s'-1$, $q=j$};  \\
0,      &\mbox{otherwise}.
\end{array}
\right.\]

Furthermore, the maps
$d_{pq}^{1}:{^{I}E^{1}_{p,q}}\rightarrow {^{I}E^{1}_{p-1,q}}$ are
induced by the horizontal differentials $d_{pq}^{h}$ of $C_{**}$.
Hence we obtain that the $2$-piece ${^{I}E^{2}_{p,q}}$ as follows:
\[ {^{I}E^{2}_{p,q}}=\left\{
\begin{array}{ll}
S_{\overline{j'+s'}}\otimes S_{\overline{j+s}}=S_{\overline{j+j'+s+s'}},  &\mbox{if $p=j'+s'-1, q=j+s-1;$}\\
S_{\overline{j'+s'}}\otimes S_{\overline{j}}=S_{\overline{j+j'+s'}},  &\mbox{if $p=j'+s'-1, q=j;$}\\
S_{\overline{j'}}\otimes S_{\overline{j+s}}=S_{\overline{j+j'+s}},  &\mbox{if $p=j', q=j+s-1;$}\\
S_{\overline{j'}}\otimes S_{\overline{j}}=S_{\overline{j+j'}},  &\mbox{if $p=j', q=j;$}\\
0,  &\mbox{otherwise.}
\end{array}
\right.
\]

As $s\geq s'$, there exist no $p,q$ such that both
${^{I}E^{2}_{p,q}}$ and $ {^{I}E^{2}_{p-2,q+1}}$ are non-zeros. Thus
the maps $d_{pq}^{2}: {^{I}E^{2}_{p,q}}\rightarrow
{^{I}E^{2}_{p-2,q+1}}$ are zero maps, which implies that
${^{I}E^{2}_{p,q}}= {^{I}E^{3}_{p,q}}=\cdots {^{I}E^{m}_{p,q}}=
{^{I}E^{\infty}_{p,q}}$. Since the spectral sequence
$\{^{I}E^{r}_{pq}\}$ converges to $H_{p+q}(\mathcal L)$, it follows
that $H_{m}(\mathcal L)$ has a filtration $\{F_{p}H_{m}(\mathcal
L)\}$ such that $F_{p}H_{m}(\mathcal L)/F_{p-1}H_{m}(\mathcal
L)\cong
 {^{I}E^{\infty}_{p,m-p}}$ for any $m \in \mathbb{N}$.

 In case $s\neq s'$, for any $m$, there are at most one $p_{0}$ such that
 $ {^{I}E^{\infty}_{p,m-p}}\neq 0$. Hence by the filtration of $H_{m}(\mathcal L)$, we have $H_{m}(\mathcal L)=^{I}E^{\infty}_{p_{0},m-p_{0}}$.

 In case $s=s'$, by the filtration of $H_{s-1}(\mathcal L)$, there is
an exact sequence $$0\rightarrow S_{\overline{j+j'+s}}\rightarrow
H_{j+j'+s-1}(\mathcal L)\rightarrow S_{\overline{j+j'+s}}\rightarrow
0.$$ Moreover, since
${\rm Ext}^{1}_{H}(S_{\overline{j+j'+s}},S_{\overline{j+j'+s}})=0$, we
have $H_{j+j'+s-1}(\mathcal L)= S_{\overline{j+j'+s}}\oplus
S_{\overline{j+j'+s}}$. Additionally, for $m\neq s-1$, there is at
most one $p_{0}$ such that
 $ {^{I}E^{\infty}_{p_{0},m-p_{0}}}\neq 0$. Hence by the filtration of $H_{m}(\mathcal L)$, we get
$H_{m}(\mathcal L)=^{I}E^{\infty}_{p_{0},m-p_{0}}$.

In a word, when $s\neq s'$, we have
\begin{equation}\label{formulaforR}
H_{m}(\mathcal L)= \left\{
\begin{array}{ll}
S_{\overline{j+j'+s+s'}},   &\mbox{if $m=j+j'+s+s'-2$;}  \\
S_{\overline{j+j'+s}},   &\mbox{if $m=j+j'+s-1$;}  \\
S_{\overline{j+j'+s'}},   &\mbox{if $m=j+j'+s'-1$;}  \\
S_{\overline{j+j'}},   &\mbox{if $m=j+j'$;}\\
0, &\mbox{otherwise;}
\end{array}
\right.\end{equation}
 when $s=s'$, we have
 \begin{equation}\label{formulaforR}
H_{m}(\mathcal L)= \left\{
\begin{array}{ll}
S_{\overline{j+j'+s+s'}},   &\mbox{if $m=j+j'+s+s'-2$;}  \\
S_{\overline{j+j'+s}}\oplus S_{\overline{j+j'+s'}},   &\mbox{if $m=j+j'+s-1$;}  \\
S_{\overline{j+j'}},   &\mbox{if $m=j+j'$;}\\
0, &\mbox{otherwise.}
\end{array}
\right.\end{equation} \\

The proof in the case $s'=1$ is similar following the below explanation.

When $s>s'=1$,  the first piece of
the spectral sequence  $\{{^{I}E_{p,q}^{r}}\}$ can be  calculated as:
 \[ {^{I}E^{1}_{p,q}}=\left\{
\begin{array}{ll}
P_{\overline{j'}}\otimes S_{\overline{j+s}}\cong P_{\overline{j+j'+s}},  &\mbox{if $p=j', q=j+s-1;$}\\
P_{\overline{j'}}\otimes S_{\overline{j}}\cong P_{\overline{j+j'}},  &\mbox{if $p=j', q=j$;}\\
0, &\mbox{otherwise.}
\end{array}
\right.
\]
Since there exists no $p,q$ such that both ${^{I}E^{1}_{p,q}}$
and $ {^{I}E^{1}_{p-1,q}}$ are non-zeros,  the maps $d_{pq}^{1}:
{^{I}E^{1}_{p,q}}\rightarrow {^{I}E^{1}_{p-1,q}}$ are zero maps,
which implies that ${^{I}E^{1}_{p,q}}= {^{I}E^{2}_{p,q}}=\cdots
{^{I}E^{m}_{p,q}}= {^{I}E^{\infty}_{p,q}}$.
Thus, \begin{equation}\label{formulaforR} H_{m}(\mathcal L)=
\left\{
\begin{array}{ll}
P_{\overline{j+j'+s}},   &\mbox{if $m=j+j'+s-1$;}  \\
P_{\overline{j+j'}},   &\mbox{if $m=j+j'$;}  \\
0,  &\mbox{otherwise.}
\end{array}
\right.\end{equation}

Finally, when $s=s'=1$, $\mathcal {L}=P_{\overline{j'}}[j']\otimes
P_{\overline{j}}[j']\cong P_{\overline{j+j'+1}}[j+j']\oplus
P_{\overline{j+j'}}[j+j']$.
\end{proof}

Through Proposition \ref{prop10.5} and some calculations, we can give
the following characterization about objects in $D^{b}(H)$ whose homological
groups are isomorphic to $H_{m}(\mathcal L)$ in two cases.

(I)~ When $s'=1$, an object $\mathcal R$ in $D^{b}(H)$ satisfies
$H_{m}(\mathcal R)\cong H_{m}(\mathcal L)$ for all
$m\in \mathbb{Z}$ if and only if $\mathcal R \cong
P^{\bullet}_{\overline{j+j'}}[j+j']\oplus
P^{\bullet}_{\overline{j+j'+s}}[j+j'+s-1] $.

(II)~ When $s'>1$, an object $\mathcal R$ in $D^{b}(H)$ satisfies
$H_{m}(\mathcal R)\cong H_{m}(\mathcal L)$ for
all $m\in \mathbb{Z}$ if and only if one of the following four cases holds:\\
Case 1: $\mathcal R$ has a direct summand with the form
$S_{\overline{k}}[m]$ for some $0\leq k\leq
n-1$ and $m\in \mathbb{Z}$;\\
Case 2: $\mathcal R\cong \mathcal R_2= P^{\bullet}(j+j'+s'-1,j+j') \oplus
P^{\bullet}(j+j'+s+s'-1,j+j'+s)[-1]$;\\
Case 3: $\mathcal R\cong\mathcal R_3= P^{\bullet}(j+j'+s-1,j+j') \oplus
P^{\bullet}(j+j'+s+s'-1,j+j'+s')[-1]$;\\
Case 4: $\mathcal R\cong\mathcal R_4= P^{\bullet}(j+j'+s+s'-1,j+j')
\oplus P^{\bullet}(j+j'+s-1,j+j'+s')[-1]$, this case happens only if $s>s'$.

Obviously the isomorphic objects in $D^{b}(H)$ must have the isomorphic
homological groups. Therefore, from (I), when $s'=1$, we have the decomposition
\begin{equation}\label{relations'=1}
\mathcal L=P^{\bullet}(j'+s'-1,j')\otimes P^{\bullet}(j+s-1,j)=P^{\bullet}_{j+j'}[j+j']\oplus P^{\bullet}_{j+j'+s}[j+j'+s-1];
\end{equation}
when $s'>1$, the decomposition of $\mathcal L$ must be equal to one of $\mathcal R$'s in the above four
cases. However, the following Proposition \ref{prop10.7} tells us that the decomposition of $\mathcal L$ is impossible equals to the form in Case 1
above in (II).

\begin{Lem}\label{lem10.6}
{\rm A chain map $f: P^{\bullet}(i, -\infty) \rightarrow
P^{\bullet}(i, -\infty)$ is null homotopic if and only if $f=0$.}
\end{Lem}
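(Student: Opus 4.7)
My plan is to prove the nontrivial direction, that if $f$ is null-homotopic then $f=0$; the converse is trivial. Write the null-homotopy explicitly as
\[
f_a = d_{a+1}h_a + h_{a-1}d_a, \qquad a\le i,
\]
where $h_a\colon P_{\overline{a}}\to P_{\overline{a+1}}$ for $a\le i-1$ and $h_i := 0$ by convention (since position $i+1$ of the complex is zero). The key structural input I will exploit is that $A = KZ_n/J^{2}$ has Jacobson radical $\mathfrak{r}$ with $\mathfrak{r}^{2}=0$; in particular $\rad(P_{\overline{a}}) = \mathfrak{r}P_{\overline{a}} = K\alpha_{\overline{a}}$ is simple and $\mathfrak{r}\cdot\rad(P_{\overline{a}}) = 0$ for every indecomposable projective $P_{\overline{a}}$.

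The first step is to show that both the differentials and every homotopy component land inside the radical of the target. The differential $d_a\colon P_{\overline{a}}\to P_{\overline{a-1}}$ sends the generator $e_{\overline{a}}$ to $\alpha_{\overline{a-1}}\in\mathfrak{r}P_{\overline{a-1}}$, so $A$-linearity yields $\mathrm{im}(d_a)\subseteq \mathfrak{r}P_{\overline{a-1}}$. For $h_a$, note that $h_a(e_{\overline{a}})\in P_{\overline{a+1}}$ must sit at the vertex $\overline{a}$; since the standing hypothesis $n\ge 2$ forces $\overline{a}\ne \overline{a+1}$, this value cannot involve the top generator $e_{\overline{a+1}}$, and therefore $h_a(e_{\overline{a}})\in K\alpha_{\overline{a+1}} = \mathfrak{r}P_{\overline{a+1}}$. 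Again by $A$-linearity, $\mathrm{im}(h_a)\subseteq \mathfrak{r}P_{\overline{a+1}}$.

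The second step combines these two inclusions with $\mathfrak{r}^{2}=0$: for every $a\le i$,
\[
d_{a+1}h_a(P_{\overline{a}}) \subseteq d_{a+1}(\mathfrak{r}P_{\overline{a+1}}) = \mathfrak{r}\,d_{a+1}(P_{\overline{a+1}}) \subseteq \mathfrak{r}^{2}P_{\overline{a}} = 0,
\]
\[
h_{a-1}d_a(P_{\overline{a}}) \subseteq h_{a-1}(\mathfrak{r}P_{\overline{a-1}}) = \mathfrak{r}\,h_{a-1}(P_{\overline{a-1}}) \subseteq \mathfrak{r}^{2}P_{\overline{a}} = 0,
\]
whence $f_a = 0$ for every $a\le i$, i.e.\ $f=0$. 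I do not anticipate any genuine obstacle. The only mild subtlety is that for $n=2$ the Hom space $\mathrm{Hom}(P_{\overline{a}},P_{\overline{a+1}})$ is actually one-dimensional (spanned by left multiplication by $\alpha_{\overline{a+1}}$), so one cannot kill each $h_a$ outright; the radical argument above is precisely what sidesteps this by placing the images uniformly in $\mathfrak{r}$. The upper boundary $a=i$ is handled automatically by the convention $h_i=0$, and the unbounded lower end requires no extra bookkeeping.
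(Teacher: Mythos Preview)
Your proof is correct and actually a little more careful than the paper's. The paper argues the nontrivial direction by writing $f_j = s_{j-1}d_j + d_{j+1}s_j$ with $s_j\colon P_{\overline{j}}\to P_{\overline{j+1}}$, then asserts that $\mathrm{Hom}_H(P_{\overline{j}},P_{\overline{j+1}})=0$ for all $j$, forcing each $s_j=0$ and hence $f=0$. That Hom vanishing is indeed immediate when $n\ge 3$, since $P_{\overline{j+1}}$ is supported only at the vertices $\overline{j+1}$ and $\overline{j+2}$, neither of which equals $\overline{j}$. For $n=2$, however, $\overline{j+2}=\overline{j}$ and $\mathrm{Hom}_H(P_{\overline{j}},P_{\overline{j+1}})$ is one-dimensional, exactly the subtlety you flagged. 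Your approach sidesteps this by observing that both $d$ and any homotopy $h$ have images in the radical, and then using $\mathfrak r^{2}=0$ to kill the composites $d\circ h$ and $h\circ d$ rather than $h$ itself. So the two proofs share the same skeleton, but yours trades the Hom-vanishing step for a uniform radical argument that covers $n=2$ without a separate case.
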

\begin{proof}
If $f=(f_j)_{j\in \mathbb{Z}}$ is null homotopic, then there exist
$s_j:P_{\overline{j}}\rightarrow P_{\overline{j+1}}$ such that
$f_j=s_{j-1}d_j+d_{j+1}s_j$ for all $j\in \mathbb{Z}$, where $d_j$
denote the differentials of $P^{\bullet}(i, -\infty)$. By the
definition of $H$, it is easy to see that $\rm
Hom_{H}(P_{\overline{i}},P_{\overline{i+1}})=0$, therefore $s_j=0$
for all $j\in \mathbb{Z}$. Hence $f_j=0$ for all $j\in \mathbb{Z}$.
The converse result is trivial.
\end{proof}

\begin{Prop}\label{prop10.7}
{\rm $\mathcal L=P^{\bullet}(j'+s'-1,j')\otimes P^{\bullet}(j+s-1,j)$
contains no direct summand as the form $S_{k}^{\bullet}[m]$ for $0\leq k\leq n-1$ and $m\in \mathbb{Z}$.}
\end{Prop}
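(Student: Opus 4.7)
The plan is to argue by contradiction, exploiting that $\mathcal{L}$ is a bounded complex of projectives while any stalk complex $S_{k}^{\bullet}[m]$ admits, as an object of $K^{-}(\mathrm{proj}\,H)$, only an unbounded-below projective representative.

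First I would check that $\mathcal{L}$ is a bounded complex of projective $H$-modules. Each term $\mathcal{L}_n$ is a direct sum of modules of the form $P_{\overline{a}}\otimes P_{\overline{b}}$, and combining Lemma \ref{lem6.1}(iii) (with $p=t=2$) with the Elementary Lemma \ref{lem3.3} gives $P_{\overline{a}}\otimes P_{\overline{b}}\cong P_{\overline{a+b}}\oplus P_{\overline{a+b+1}}$. Hence $\mathcal{L}\in K^{b}(\mathrm{proj}\,H)$, and in particular $\mathcal{L}_n=0$ whenever $n<j+j'$ or $n>j+j'+s+s'-2$.

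Next I would use the quasi-isomorphism $S_{\overline{i+1}}^{\bullet}[i]\simeq P^{\bullet}(i,-\infty)$ recalled just before the proposition to represent a hypothetical summand $S_k^{\bullet}[m]$ by the bounded-above projective complex $P^{\bullet}(k-1,-\infty)[m-k+1]$. Both this model and $\mathcal{L}$ lie in $K^{-}(\mathrm{proj}\,H)$, and under the standard equivalence $K^{-}(\mathrm{proj}\,H)\xrightarrow{\sim} D^{-}(H)$ the Hom-space between them in $D^{b}(H)$ coincides with chain maps modulo homotopy. Consequently, a putative direct-summand relation $\mathcal{L}\cong S_k^{\bullet}[m]\oplus X$ in $D^{b}(H)$ supplies honest chain maps
\[\iota\colon P^{\bullet}(k-1,-\infty)[m-k+1]\longrightarrow\mathcal{L},\qquad \pi\colon\mathcal{L}\longrightarrow P^{\bullet}(k-1,-\infty)[m-k+1]\]
whose composition $\pi\iota$ is homotopic to the identity.

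The contradiction is then obtained by comparing degrees. For $n$ sufficiently negative we have simultaneously $n<j+j'$ (so $\mathcal{L}_n=0$ and hence $(\pi\iota)_n=0$) and $n\leq m$ (so that $P_{\overline{n-m+k-1}}$ is the non-zero degree-$n$ component of $P^{\bullet}(k-1,-\infty)[m-k+1]$, on which $\mathrm{id}_n$ acts non-trivially). Thus $\pi\iota-\mathrm{id}$ is a non-zero chain endomorphism that is null-homotopic. However, any null homotopy is assembled from morphisms $P_{\overline{a}}\to P_{\overline{a+1}}$, and these vanish because $Z_n$ has no backward arrows and $H=KZ_n/J^{2}$; this is exactly the mechanism of Lemma \ref{lem10.6}. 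Hence such a null-homotopic endomorphism must actually be zero outright, contradicting $(\pi\iota)_n=0\neq \mathrm{id}_n$.

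The main point that deserves care when writing up the full proof is the second step — lifting the putative $D^{b}(H)$-splitting to an honest chain-level splitting between the projective representatives of the two sides. This is a standard application of the equivalence $K^{-}(\mathrm{proj}\,H)\simeq D^{-}(H)$ together with the observation that both objects involved sit in $K^{-}(\mathrm{proj}\,H)$. Once the lifting is in place, the argument reduces essentially to applying Lemma \ref{lem10.6} to the endomorphism $\pi\iota-\mathrm{id}$, combined with the bounded-versus-unbounded dichotomy between $\mathcal{L}$ and any projective representative of a stalk complex of a simple module.
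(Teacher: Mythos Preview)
Your proposal is correct and follows essentially the same strategy as the paper's proof: both replace $S_k^{\bullet}[m]$ by its unbounded projective (equivalently injective, since $H=KZ_n/J^2$ is self-injective) representative $P^{\bullet}(k-1,-\infty)$ up to shift, lift the putative direct-summand relation to honest chain maps via the equivalence between the homotopy category of bounded-above projective/injective complexes and the derived category, invoke Lemma~\ref{lem10.6} to force the null-homotopic endomorphism $\pi\iota-\mathrm{id}$ to vanish, and then derive a contradiction from the bounded/unbounded dichotomy. Your explicit verification that $\mathcal{L}\in K^{b}(\mathrm{proj}\,H)$ via Lemma~\ref{lem6.1}(iii) and the Elementary Lemma is a welcome detail that the paper leaves implicit.
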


\begin{proof}
Otherwise, there exists $m\in \mathbb{Z}$, $0\leq k\leq n-1$ such
that $S_{k}^{\bullet}[m]$ is a direct summand of $\mathcal L$. Since
$\mathcal {K}^{-}(\mathcal {I})\cong D^{b}(H)$, $P^{\bullet}(k-1, -\infty)$ is the minimal injective resolution of
$S_k$ and $\mathcal {K}^{-}(\mathcal {I})$ is the bounded above
homotopy category of injective complexes, which is a subcategory of
homotopy category consisting of bounded above injective complexes, thus
$P^{\bullet}(k-1, -\infty)[m]$ is a direct summand of $\mathcal L$
in $\mathcal {K}^{-}(\mathcal {I})$. Therefore, there exists
$\overline{f}\in \rm Hom_{\mathcal {K}^{-}(\mathcal {I})}(\mathcal
L, P^{\bullet}(k-1, -\infty)[m])$ and $\overline{g}\in \rm
Hom_{\mathcal {K}^{-}(\mathcal {I})}(P^{\bullet}(k-1, -\infty)[m],
\mathcal L)$ such that $\overline{f}\overline{g}=id$ in $\mathcal
{K}^{-}(\mathcal {I})$. Equivalently, there exists $f\in \rm
Hom_{Ch(H)}(\mathcal L, P^{\bullet}(k-1, -\infty)[m])$ and $g\in \rm
Hom_{Ch(H)}(P^{\bullet}(k-1, -\infty)[m], \mathcal L)$ such that
$fg-id$ is null homotopic in $Ch(H)$. Now by Lemma \ref{lem10.6},
$fg-id=0$, so $fg=id$. Then we deduce that $P^{\bullet}(k-1,
-\infty)[m]$ is a direct summand of $\mathcal L$ in $Ch(H)$. But it
is impossible because $P^{\bullet}(j+s-1,j)\otimes
P^{\bullet}(j'+s'-1,j')$ is a bounded complex.
\end{proof}

From Proposition \ref{prop10.7}, we know that the decomposition of
$\mathcal L$ cannot be in Case 1 when $s'>1$.

Additionally, when $s=s'>1$, Case 2 coincides with Case 3 and Case 4 does not appear. Hence in this condition, we have the decomposition:
\begin{equation}\label{s=s'>1}
\mathcal L=P^{\bullet}(j+j'+s'-1,j+j')\oplus
P^{\bullet}(j+j'+s+s'-1,j+j'+s)[-1].
\end{equation}

Therefore, we only need to consider the decomposition of $\mathcal L$ when $s>s'>1$. Unfortunately, so far we cannot determine the decomposition of
$\mathcal L$ in which form of Cases 2, 3, 4 exactly in general. We conjecture that there is only Case 2 can occur in the decomposition of $\mathcal L$, according to several decompositions we have already calculated. In summary, we collect the above discussion into the following conjecture.

\begin{Conj}\label{conj10.4}
{\rm Using the above notations in $D^{b}(KZ_{n}/J^{2})$ for all $j,j',s,s'\in \mathds{Z}$ and $s'\leq s$, we always have the following decomposition of  indecomposable complexes:
\begin{equation}\label{conj}
\mathcal L=P^{\bullet}(j'+s'-1,j')\otimes P^{\bullet}(j+s-1,j)\cong
P^{\bullet}(j+j'+s'-1,j+j') \oplus
P^{\bullet}(j+j'+s+s'-1,j+j'+s)[-1].
\end{equation} }
\end{Conj}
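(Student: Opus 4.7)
The plan is to establish the conjectured decomposition by constructing two explicit chain-level maps
$$\iota_1: P^\bullet(j+j'+s'-1,j+j')\longrightarrow \mathcal L,\qquad \iota_2: P^\bullet(j+j'+s+s'-1,j+j'+s)[-1]\longrightarrow \mathcal L$$
whose direct sum is a quasi-isomorphism, and then invoking Krull--Schmidt in $D^b(H)$ together with the classification of its indecomposable objects given just before Proposition \ref{prop10.5}. Since Cases 2, 3, and 4 all reproduce the homology groups of Proposition \ref{prop10.5}, homology alone cannot distinguish them, so the argument must engage with chain-level data.

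To construct $\iota_1$ and $\iota_2$, I would first use the Hopf comultiplication
$$\Delta(\alpha_{\overline h})=\sum\limits_{g_1+g_2=\overline h}(\alpha_{g_1}\otimes v_{g_2}+v_{g_1}\otimes\alpha_{g_2})$$
to decompose each tensor $P_{\overline a}\otimes P_{\overline b}$ occurring in the double complex $C_{**}$ into indecomposable $H$-module summands; as a $4$-dimensional module this splits as a controlled combination of copies of $P_{\overline{a+b}}$ and $P_{\overline{a+b+1}}$. The map $\iota_1$ would then be built by choosing in each column of $C_{**}$ a ``diagonal'' copy $P_{\overline{j+j'+k}}\hookrightarrow C_{j'+k,\,j}$ compatible with the total differential $\widehat{d}_n=\bigoplus_{p+q=n}(d^h_p+d^v_q)$, and $\iota_2$ would be built analogously from the ``anti-diagonal'' component concentrated in the higher bidegrees $(p,q)$ with $p+q$ close to $s+s'-2$. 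A direct verification against Proposition \ref{prop10.5} would then confirm that $\iota_1\oplus \iota_2$ induces an isomorphism on every homology group.

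The main obstacle is ruling out Cases 3 and 4, i.e.\ certifying that the differentials of $\mathcal L$ do not glue the two candidate summands together into a single longer indecomposable (as would happen in Case 4, where one summand has length $s+s'$ and the other only $s-s'$). The cleanest route is to establish the vanishing
$${\rm Hom}_{D^b(H)}\bigl(P^\bullet(j+j'+s+s'-1,j+j'+s)[-1],\, P^\bullet(j+j'+s'-1,j+j')[1]\bigr)=0,$$
which would force any extension realising the prescribed homology to split, leaving Case 2 as the only possibility; equivalently one could produce an explicit retraction $\rho_1:\mathcal L\twoheadrightarrow P^\bullet(j+j'+s'-1,j+j')$ by dualising the construction of $\iota_1$. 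Both approaches demand a detailed analysis of morphism and extension spaces between bounded complexes of projective $KZ_n/J^2$-modules whose indexing intervals overlap in delicate ways, and this combinatorial bookkeeping on the possible homotopy classes of chain maps is precisely the point at which the direct approach becomes unmanageable in full generality, which is why the statement is offered only as Conjecture \ref{conj10.4} in the present paper.
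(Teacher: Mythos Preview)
The statement is a \emph{conjecture} in the paper, not a theorem: the paper establishes it only when $s'=1$ (formula (\ref{relations'=1})) and when $s=s'>1$ (formula (\ref{s=s'>1})), and explicitly says that for $s>s'>1$ it cannot decide among Cases 2, 3, 4 listed after Proposition \ref{prop10.5}. So there is no ``paper's own proof'' to compare against in the open range. You acknowledge this in your final paragraph, which is the honest thing to do.

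Your proposed strategy differs from what the paper does in the settled cases. The paper's method is purely homological: compute $H_m(\mathcal L)$ via a spectral sequence (Proposition \ref{prop10.5}), list every object of $D^b(H)$ with that homology, rule out the stalk-simple summands by Proposition \ref{prop10.7}, and observe that when $s'=1$ or $s=s'$ only one candidate survives. Your plan instead is to build explicit chain maps $\iota_1,\iota_2$ and check they assemble to a quasi-isomorphism. That is a reasonable line of attack and, if carried out, would be more constructive than the paper's approach.

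There is, however, a genuine gap in the step you single out as decisive. The vanishing
\[
{\rm Hom}_{D^b(H)}\bigl(P^\bullet(j+j'+s+s'-1,j+j'+s)[-1],\, P^\bullet(j+j'+s'-1,j+j')[1]\bigr)=0
\]
does not by itself force $\mathcal L$ to be the Case 2 object rather than the Case 3 or Case 4 object. That vanishing would split a distinguished triangle with those two complexes as outer terms, but you have not produced such a triangle for $\mathcal L$; all three candidate decompositions $\mathcal R_2,\mathcal R_3,\mathcal R_4$ have identical homology, and a Hom-group between the summands of $\mathcal R_2$ says nothing about whether $\mathcal L\cong\mathcal R_3$ or $\mathcal L\cong\mathcal R_4$. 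What is actually needed is either (a) the explicit quasi-isomorphism $\iota_1\oplus\iota_2$ you sketch, verified degree by degree, or (b) an invariant of $\mathcal L$ finer than homology that separates the three cases. Your outline gestures at (a) but does not carry it out, and (b) is exactly what the paper says it lacks. So your proposal correctly locates the difficulty but does not resolve it; the conjecture remains open on the same range $s>s'>1$ as in the paper.
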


From the above discussion, we know that this conjecture has been affirmed positively except for the case $s>s'>1$.

At last, we give a summary on
the derived ring $dr(KZ_{n}/J^{2})$ as follows.
\begin{Thm}\label{thm10.6}
{\rm For ${\rm char}K=d=2, n\geq 2$, the following statements hold.

(i) The derived ring $dr(KZ_{n}/J^{2})$ is generated by
$[M(1,\overline{1})^{\bullet}[r]],[M(2,\overline{0})^{\bullet}[r]],
[M(p^{l}+1,\overline{0})^{\bullet}[r]]$ and $[P^{\bullet}(i,j)[r]]$,
where $r\in \mathds{Z}, 1\leq l\leq m-1, -\infty\leq j\leq
i<+\infty$.

(ii) The relations of the generators of $dr(KZ_{n}/J^{2})$ in (i)
can be determined by those in Proposition \ref{prop10.3} (ii) and
respectively
 in the following cases for all $j,j',s,s'\in \mathds{Z}$ and $s'\leq s$: \\
(1)~ when $s'=1$, by the decomposition
$$P^{\bullet}(j'+s'-1,j')\otimes P^{\bullet}(j+s-1,j)=P^{\bullet}_{\overline{j+j'}}[j+j']\oplus P^{\bullet}_{\overline{j+j'+s}}[j+j'+s-1];$$
(2)~ when $s=s'>1$, by the decomposition:
$$P^{\bullet}(j'+s'-1,j')\otimes P^{\bullet}(j+s-1,j)=P^{\bullet}(j+j'+s'-1,j+j')\oplus P^{\bullet}(j+j'+2s-1,j+j'+s)[-1];$$
(3)~ when $s>s'>1$, by one of the three possible decompositions:
 $$P^{\bullet}(j'+s'-1,j')\otimes P^{\bullet}(j+s-1,j)= P^{\bullet}(j+j'+s'-1,j+j') \oplus
P^{\bullet}(j+j'+s+s'-1,j+j'+s)[-1];$$
 $$P^{\bullet}(j'+s'-1,j')\otimes P^{\bullet}(j+s-1,j)= P^{\bullet}(j+j'+s-1,j+j') \oplus
P^{\bullet}(j+j'+s+s'-1,j+j'+s')[-1];$$
$$P^{\bullet}(j'+s'-1,j')\otimes P^{\bullet}(j+s-1,j)= P^{\bullet}(j+j'+s+s'-1,j+j') \oplus
P^{\bullet}(j+j'+s-1,j+j'+s')[-1].$$

(iii) For the indeterminates $y[r],z[r],w_{l}[r],\nu(i,j)[r]$, let
$I''$ be the ideal of the polynomial ring
$$\mathds{Z}[y[r],z[r],w_{l}[r],\nu(i,j)[r]]_{r\in \mathds{Z}, 1\leq
l\leq m-1,-\infty\leq j\leq i<+\infty}$$ generated by the relations
which are given in Proposition \ref{prop10.3} (ii) and the (ii) in this theorem through replacing
$[M(1,\overline{1})^{\bullet}[r]],[M(2,\overline{0})^{\bullet}[r]],[M(p^{l}+1,\overline{0})^{\bullet}[r]],
[P^{\bullet}(i,j)[r]]$ by $$y[r],z[r],w_{l}[r],\nu(i,j)[r],$$
respectively for
 $r\in \mathds{Z}, 1\leq l\leq m-1,-\infty\leq j\leq i<+\infty$. Then there is the ring isomorphism   $$dr(KZ_{n}/J^{2})\cong \mathds{Z}[y[r],z[r],w_{l}[r],\nu(i,j)[r]]_{r\in \mathds{Z}, 1\leq l\leq m-1, -\infty\leq j\leq i<+\infty}/I''.$$}
\end{Thm}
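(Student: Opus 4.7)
The plan is to assemble (i), (ii), (iii) by combining the classification of indecomposables in $D^b(KZ_n/J^2)$ (the lemma preceding Proposition \ref{prop10.5}) with the homological computation of Proposition \ref{prop10.5} and the shift-ring description of Proposition \ref{prop10.3}. Since the hypothesis ${\rm char}K=d=2$ forces $p=2$ and $m=1$, the index range $1\leq l\leq m-1$ in (i) is empty, so the listed stalk-complex generators reduce to the shifts $[M(1,\overline{1})^\bullet[r]]$ and $[M(2,\overline{0})^\bullet[r]]$; these are precisely the shift-ring generators of Proposition \ref{prop10.3}(i), while $[P^\bullet(i,j)[r]]$ with $j<i$ exhausts the remaining indecomposables, yielding (i).

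For (ii), the relations among shifted stalk complexes are exactly those of Proposition \ref{prop10.3}(ii), so it remains to handle $\mathcal L = P^\bullet(j'+s'-1,j')\otimes P^\bullet(j+s-1,j)$. Here I would invoke Proposition \ref{prop10.5} to compute $H_m(\mathcal L)$ in each of the three sub-cases. When $s'=1$, the homology is concentrated in two degrees as projectives, and characterization (I) preceding Proposition \ref{prop10.7} forces $\mathcal L\cong P^\bullet_{\overline{j+j'}}[j+j']\oplus P^\bullet_{\overline{j+j'+s}}[j+j'+s-1]$, giving (1). When $s=s'>1$, Cases 2 and 3 of classification (II) coincide and Case 4 is vacuous, while Proposition \ref{prop10.7} rules out simple-stalk summands (Case 1), giving (2). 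When $s>s'>1$, Proposition \ref{prop10.7} again excludes Case 1, and the three possibilities listed in (3) are exactly Cases 2, 3, 4 of (II).

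For (iii), I would define a ring homomorphism from the polynomial ring onto $dr(KZ_n/J^2)$ by sending each indeterminate to its stated isomorphism class, and identify the kernel with $I''$ using the relations from (ii) together with the fact that $dr(KZ_n/J^2)$ is a free abelian group on the indecomposables of $D^b(H)$. Surjectivity is immediate from (i), and the kernel identification follows the template of Theorem \ref{thm7.5} and Proposition \ref{prop10.3}(iii).

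The main obstacle is squarely in (ii)(3): when $s>s'>1$, Proposition \ref{prop10.5} pins down the homology of $\mathcal L$, but the three candidate complexes in Cases 2, 3, 4 share the same homology and therefore cannot be distinguished by any spectral-sequence calculation of this type. A definitive answer would require a chain-level construction, for instance by exhibiting explicit quasi-isomorphisms or by computing morphism spaces in $D^b(H)$ between the candidates; this is precisely the content of Conjecture \ref{conj10.4}, and the theorem as stated accommodates the ambiguity by listing three possible decompositions rather than a single one.
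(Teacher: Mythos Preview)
Your proposal is correct and follows essentially the same approach as the paper: parts (i) and (iii) are reduced to Proposition \ref{prop10.3} via the classification of indecomposables in $D^b(KZ_n/J^2)$ and the construction of the obvious ring epimorphism $\psi$, while part (ii) is obtained from Proposition \ref{prop10.5} together with the characterizations (I), (II) and Proposition \ref{prop10.7} to exclude simple-stalk summands. Your observation that $m=1$ makes the $w_l$-range empty is a helpful clarification that the paper leaves implicit, and your diagnosis of the obstacle in (ii)(3) matches exactly the paper's Conjecture \ref{conj10.4}.
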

\begin{proof}
The proofs of the parts (i) and (iii) are similar with that of the
corresponding parts of Proposition \ref{prop10.3}. One only needs to
note that all the indecomposable objects of $D^{b}(KZ_{n}/J^{2})$
are $\{P^{\bullet}(i,j)[l]\}_{-\infty \leq j\leq i<+\infty, l\in
\mathds{Z}}$ and similarly the ring homomorphism $\psi$ can be
defined about the indeterminates $y[r],z[r],w_{l}[r],\nu(i,j)[r]$ as
follows:
$$\psi:\mathds{Z}[y[r],z[r],w_{l}[r],\nu(i,j)[r]]_{r\in \mathds{Z}, 1\leq l\leq m-1, -\infty\leq j\leq i<+\infty}\rightarrow dr(KZ_{n}/J^{2})$$ by $\psi(y[r])=[M(1,\overline{1})^{\bullet}[r]]$, $\psi(z[r])=[M(2,\overline{0})^{\bullet}[r]]$, $\psi(w_{l}[r])=[M(p^{l}+1,\overline{0})^{\bullet}[r]]$ and $\psi(\nu(i,j)[r])=[P^{\bullet}(i,j)[r]]$.

The proof of the part (ii) is due to Fact \ref{fact1} (i) and
Proposition \ref{prop10.3} (ii) and the above discussion before this
theorem with (\ref{relations'=1}), (\ref{s=s'>1}) and the Cases
2,3,4 in (II).
\end{proof}

Furthermore, if Conjecture \ref{conj10.4} is confirmed positively, we can unify
the (1),(2),(3) of Theorem \ref{thm10.6} (ii) with the decomposition
(\ref{conj}) in Conjecture \ref{conj10.4} eventually.
 \vspace{8mm}

{\bf Acknowledgements:}\; This project is supported by the National Natural Science
Foundation of
China (No.11271318, No.11171296 and No.J1210038) and the Specialized Research Fund for the
Doctoral Program of Higher Education of China (No.20110101110010) and the Zhejiang
Provincial
Natural Science Foundation of China (No.LZ13A010001).
\vspace{6mm}


\begin{thebibliography}{20}

\bibitem{[1]} I. Assem, D. Simson and A. Skowro\'{n}ski, ``Elements of the Representation Theory of Associative Algebras. Vol.1. Techniques of Representation Theory'', London Mathematical Society Student Texts $\mathbf{65}$ (Cambridge University Press, Cambridge, 2006).

\bibitem{[2]} M. J. Bardzell, A. C. Locateli and E. N. Marcos, ``On the hochschild cohomology of truncated cycle algebras'', Comm. Algebra $\mathbf{28}$ (3) (2000), 1615-1639.

\bibitem{[pre3]} R. Bautista, S. Liu, ``Covering theory for linear categories with application to derived categories'', J. Algebra {\bf 406} (2014), 173-225.

\bibitem{[3]}R. Bautista, S. Liu, ``The bounded derived category of an algebra with radical squared zero'', preprint, available at \url{http://www.dmi.usherb.ca/~shiping/Articles/BAUL.pdf} ~.

\bibitem{[4]} V. Bekkert, H. A. Merklen, ``Indecomposables in Derived Categories of Gentle Algebras'', Algebr. Represent. Theory $\mathbf{6}$ (2003), 285-302.

\bibitem{[5]} J. B$\grave{e}$nabou, ``Categories avec multiplication'', C. R. Acad. Sci. Paris S$\grave{e}$r. I Math. $\mathbf{256}$ (1963), 1887-1890.

\bibitem{[6]} D. J. Benson, ``Representation rings of finite groups. Representations of algebras (Durham, 1985)'', 181-199, London Mathematical Society Lecture Note Series $\mathbf{116}$ (Cambridge University Press, Cambridge, 1986).

\bibitem{[7]} H. X. Chen, F. V. Oystaeyen and Y. H. Zhang, ``The Green rings of Taft algebras'', Proc. Amer. Math. Soc. $\mathbf{142}$ (3) (2014), 765-775.

\bibitem{[8]} C. Cibils, ``A quiver quantum groups'', Commun. Math. Phys. $\mathbf{157}$ (1993), 459-477.

\bibitem{[9]} E. Darp$\ddot{o}$, M. Herschend, ``On the representation ring of the polynomial algebra over perfect field'', Math. Z. $\mathbf{265}$ (2011), 605-615.

\bibitem{[10]} K. R. Fuller, ``Generalized uniserial rings and their Kupisch series'', Math. Z. $\mathbf{106}$ (1968), 248-260.

\bibitem{[11]} W. Fulton, J. Harris, ``Representation theory'', Graduate Texts in Mathematics, $\mathbf{129}$ (Springer-Verlag, New York, 1991).

\bibitem{[12]} E. L. Green, {\O}. Solberg, ``Basic Hopf algebras and quantum group'', (English summary) Math. Z. $\mathbf{229}$ (1) (1998), 45-76.

\bibitem{[13]} J. A. Green, ``The modular representation algebra of a finite group'', Illinois J. Math. $\mathbf{6}$ (1962), 607-619.

\bibitem{[14]} D. Happel, ``Triangulated Categories in the Representation Theory of Finite Dimensional Algebras'', London Mathematical Society Lecture Note
    Series $\mathbf{119}$ (Cambridge University Press, Cambridge, 1988).

\bibitem{[15]} R. Hartshorne, ``Algebraic Geometry'', Graduate Texts in Mathematics, $\mathbf{52}$ (Springer-Verlag, New York, 1977).

\bibitem{[16]} C. Kassel, ``Quantum Groups'',  Graduate Texts in Mathematics, $\mathbf{155}$ (Springer-Verlag, New York, 1995).

\bibitem{[17]} L. B. Li, Y. H. Zhang, ``The Green rings of the generalized Taft Hopf algebras'', Contemp. Math. $\mathbf{585}$ (2013), 275-288.

\bibitem{[18]} M. Lorenz, ``Representations of finite-dimensional Hopf algebras'', J. Algebra $\mathbf{188}$ (1997), 476-505.

\bibitem{[19]} V. Mazorchuk, ``Lectures on algebraic categorification'', QGM Master Class Series (European Mathematical Society, Z$\ddot{u}$rich, 2012).

\bibitem{[20]} T. Nakayama, ``Note on uniserial and generalized uniserial rings'', Proc. Imp. Acad. Tokyo $\mathbf{16}$ (1940), 285-289.

\bibitem{[21]} D. E. Radford, ``On the coradical of a finite-dimensional Hopf algebra'', Proc. Amer. Math. Soc. $\mathbf{53}$ (1) (1975), 9-15.

\bibitem{[22]} B. Ram, ``Common factors of $n!/m!(n-m)!, (m=1,2,\cdots,n-1)$'', J. Indian Math. Club (Madras) $\mathbf{1}$ (1909), 39-43.

\bibitem{[23]} M. Reid, ``Undergraduate algebraic geometry'', London Mathematical Society Student Texts $\mathbf{12}$ (Cambridge University Press, Cambridge, 1988).

\bibitem{[24]} E. J. Taft, ``The order of the antipode of a finite-dimensional Hopf algebra'', Proc. Natl. Acad. Sci. USA $\mathbf{68}$ (1971), 2631-2633.

\bibitem{[25]} C. A. Weibel, ``An introduction to homological algebra'', Cambridge studies in advanced mathematics $\mathbf{38}$ (Cambridge University Press, Cambridge, 1994).

\bibitem{[26]} S. J. Witherspoon, ``The representation ring of the quantum double of a finite group'', J. Algebra $\mathbf{179}$ (1996), 305-329.

\end{thebibliography}
\end{document}